\theoremstyle{plain}
\newtheorem{theorem}{Theorem}[section]
\newtheorem{corollary}[theorem]{Corollary}
\newtheorem{lemma}[theorem]{Lemma}
\newtheorem{proposition}[theorem]{Proposition}
\newtheorem{definition}[theorem]{Definition}
\newtheorem*{definition*}{Definition}
\theoremstyle{remark}
\newtheorem{remark}[theorem]{Remark}
\newtheorem{claim}{Claim}
\newtheorem*{claim*}{Claim}
\newtheorem*{remark*}{Remark}
\newtheorem*{example*}{Example}
\newtheorem*{notation*}{Notation}
\numberwithin{equation}{section}
\def\N{{\mathbb N}}
\def\R{{\mathbb R}}
\renewcommand{\P}{{\mathbb P}}
\newcommand{\1}{{{\bf 1}}}
\renewcommand{\a}{\alpha}
\renewcommand{\d}{\delta}
\newcommand{\eps}{\varepsilon}
\renewcommand{\phi}{\varphi}
\newcommand{\dd}{\; \mathrm{d}}
\newcommand{\xx}{\mathbf{x}}
\DeclareMathOperator{\supp}{supp}
\newcommand{\ip}[1]{\langle {#1}\rangle}
\newcommand{\norm}[1]{\| {#1}\|}
\newcommand{\abs}[1]{\vert {#1}\vert}
\DeclareMathOperator{\Ric}{Ric}
\DeclareMathOperator{\Lip}{\mathsf{Lip}}
\DeclareMathOperator{\Hess}{Hess}
\DeclareMathOperator{\Leb}{Leb}
\DeclareMathOperator{\ent}{Ent}
\DeclareMathOperator{\vol}{vol}
\DeclareMathOperator{\bB}{\boldsymbol{B}}
\newcommand{\ddt}{\frac{\mathrm{d}}{\mathrm{d}t}}
\newcommand{\ddtr}{\frac{\mathrm{d}^+}{\mathrm{d}t}}
\newcommand{\cC}{\mathcal{C}}
\newcommand{\cB}{\mathcal{B}}
\newcommand{\cE}{\mathcal{E}}
\newcommand{\cA}{\mathcal{A}}
\newcommand{\cF}{\mathcal{F}}
\newcommand{\cP}{\mathscr{P}}
\newcommand{\e}{\mathrm{e}}
\DeclareSymbolFont{AMSb}{U}{msb}{m}{n}
\newcommand{\cs}{\Upsilon}
\newcommand{\dcs}{d_\Upsilon}
\newcommand{\cyl}{\mathsf{Cyl}^\infty(\cs)}
\newcommand{\Ent}{\mbox{Ent}}
\newcommand{\Opt}{\mathsf{Opt}}
\newcommand{\pr}{\mbox{proj}}
\newcommand{\Ch}{\mathsf{Ch}}
\newcommand{\Cpl}{\mathsf{Cpl}}
\begin{document}

\title{Curvature bounds for configuration spaces}
\author{Matthias Erbar}
\author{Martin Huesmann}
\thanks{This material is based upon work supported by the National Science Foundation under Grant No.\ 0932078 000, while both authors were in residence at the Mathematical Sciences Research Institute in Berkeley, California, in the fall of 2013. M.E.\ gratefully acknowledges funding  from the European Research Council
under the European Community’s Seventh Framework Programme (FP7/2007-2013) / ERC Grant
agreement GeMeThnES No.\ 246923; M.H.\  from the CRC 1060. }
\address{
University of Bonn\\
Institute for Applied Mathematics\\
Endenicher Allee 60\\
53115 Bonn\\
Germany}
\email{erbar@iam.uni-bonn.de}
\email{huesmann@iam.uni-bonn.de}




\begin{abstract}
  We show that the configuration space $\cs$ over a manifold $M$
  inherits many curvature properties of the manifold. For instance, we
  show that a lower Ricci curvature bound on $M$ implies a lower Ricci
  curvature bound on $\cs$ in the sense of Lott--Sturm--Villani, the
  Bochner inequality, gradient estimates and Wasserstein
  contraction. Moreover, we show that the heat flow on $\cs$ can be
  identified as the gradient flow of the entropy.
\end{abstract}

\maketitle


\section{Introduction}

The configuration space $\cs$ over a manifold $M$ is the space of all
locally finite point measures, i.e.\
$$ \cs=\{\gamma\in\mathcal M(M): \gamma(K)\in \N_0 \mbox{ for all compact } K\subset M\}.$$
In the seminal paper \cite{AKR98} {\sc Albeverio--Kondratiev--R\"ockner}
identified a natural geometry on $\cs$ by ``lifting'' the geometry of
$M$ to $\cs$. In particular, there is a natural gradient $\nabla^\cs$,
divergence $\mbox{div}^\cs$ and Laplace operator $\Delta^\cs$ on the configuration
space. It turns out that the Poisson measure $\pi$ is the unique (up
to the intensity) measure on $\cs$ under which the gradient and
divergence become dual operators in $L^2(\cs,\pi).$ Hence, the Poisson
measure is the natural volume measure on $\cs$ and $\cs$ can be seen
as an infinite dimensional Riemannian manifold. The canonical Dirichlet form
$$\cE(F)=\int_\cs |\nabla^\cs F|^2_\gamma \ \pi(d\gamma)$$
induces the heat semigroup $T_t^\cs$ and a Brownian motion on $\cs$
which can be identified with the independent infinite particle
process.  The intrinsic metric $\dcs(\gamma,\omega)$ between two
configurations $\gamma$ and $\omega$ with respect to $\cE$ is the
non-normalized $L^2$ Wasserstein distance between the two measures
$\gamma$ and $\omega$. Typically, $\dcs$ will attain the value
$\infty$.
\medskip\\
In this article, we are interested in the curvature of $\cs$. We will
not try to define a curvature tensor. Instead, we will show that many
analytic and geometric estimates that characterize lower curvature
bounds on Riemannian manifolds \emph{lift} to natural analogues on the
configuration space.

We will first consider \emph{sectional curvature}. There are many
equivalent ways of characterizing a global lower bound $K\in\R$ on the
sectional curvature using only the Riemannian distance $d$,
e.g. Toponogov's Theorem on triangle comparison. This allows
to define a generalized sectional curvature bound also for metric
spaces leading to the notion of Alexandrov geometry, we point the
reader to \cite{BBI} for a detailed account. Our first result is that
sectional curvature bounds lift from $M$ to $\cs$.

\begin{theorem}\label{thm:main alex}
  If $M$ has sectional curvature bounded below by $K\in\R$
  then the configuration space $\cs$ has Alexandrov
  curvature bounded below by $\min\{K,0\}$.
\end{theorem}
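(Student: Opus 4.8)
Set $\kappa:=\min\{K,0\}$. The plan is to deduce the Alexandrov bound on $\cs$ from Toponogov's theorem for finite Riemannian products of $M$, exploiting the lifting structure. Since $\cs$ is infinite dimensional I would work with the $(1+3)$-point form of ``curvature $\ge\kappa$'', valid for any complete length space (see \cite{BBI}): for every ``centre'' $\gamma$ and every triple $\omega_1,\omega_2,\omega_3$ with $\dcs(\gamma,\omega_i)<\infty$ one must have $\tilde\angle_\kappa(\omega_1\gamma\omega_2)+\tilde\angle_\kappa(\omega_2\gamma\omega_3)+\tilde\angle_\kappa(\omega_3\gamma\omega_1)\le2\pi$, with $\tilde\angle_\kappa$ the comparison angle in the model surface of constant curvature $\kappa$ (the condition is vacuous when some $\dcs(\gamma,\omega_i)=\infty$, and by the triangle inequality the remaining points then lie in a single component of finite $\dcs$-distance). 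Here I would use that every configuration has countable support, that $\dcs$ is the $\ell^2$-matching distance realised by optimal matchings, and that (for $M$ complete, as is standard in this setting) each component of finite $\dcs$-distance is a complete length space.

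First, upstairs: the $m$-fold Riemannian product $M^m$ with the $\ell^2$ metric has $\sec\ge\kappa$. Indeed its curvature tensor is block diagonal, so for unit orthogonal $u=(u_1,\dots,u_m),v=(v_1,\dots,v_m)$ one has $\langle R(u,v)v,u\rangle=\sum_i\langle R^{(i)}(u_i,v_i)v_i,u_i\rangle$, which is $\ge0$ if $K\ge0$, and if $K<0$ is $\ge K\sum_i(|u_i|^2|v_i|^2-\langle u_i,v_i\rangle^2)\ge K\sum_i|u_i|^2|v_i|^2\ge K|u|^2|v|^2$. By Toponogov, $M^m$ is Alexandrov of curvature $\ge\kappa$, so the $(1+3)$-point inequality holds in $M^m$. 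This extends to the lifted product $\Pi:=\prod_{i\in\N}M$, $d_\Pi((x_i),(y_i))^2=\sum_i d(x_i,y_i)^2$, for tuples at finite distance: given $p,a,b,c$, freeze the tails, $a^N_i:=a_i$ for $i\le N$ and $a^N_i:=p_i$ for $i>N$, likewise $b^N,c^N$, and $p^N:=p$; the four points agree beyond coordinate $N$, so their mutual distances are realised inside $M^N$ and the inequality holds for them, and it survives the limit $N\to\infty$ since the distances converge and the degenerate cases where two points merge are trivial.

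To descend, fix $\gamma,\omega_1,\omega_2,\omega_3$ at finite mutual distances, fix an enumeration of $\gamma$, and enumerate each $\omega_j$ so as to realise the optimal matching with $\gamma$; let $p,a_1,a_2,a_3\in\Pi$ be the lifts, so that $d_\Pi(p,a_j)=\dcs(\gamma,\omega_j)$, while $d_\Pi(a_j,a_k)\ge\dcs(\omega_j,\omega_k)$ because the matching distance is an infimum over relabellings. As $\tilde\angle_\kappa$ at a fixed apex with fixed adjacent sides is non-decreasing in the length of the opposite side, $\tilde\angle_\kappa(\omega_j\gamma\omega_k)\le\tilde\angle_\kappa(a_j\,p\,a_k)$ for each pair, and summing the $(1+3)$-point inequality already proved for $p,a_1,a_2,a_3$ in $\Pi$ gives $\sum_{\mathrm{cyc}}\tilde\angle_\kappa(\omega_j\gamma\omega_k)\le\sum_{\mathrm{cyc}}\tilde\angle_\kappa(a_j\,p\,a_k)\le2\pi$, which is exactly the comparison required for $\cs$.

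The main obstacle is the treatment of infinite configurations. One would like to present $\cs$ as the quotient $\Pi/S_\infty$ and invoke a curvature-preserving quotient theorem, but $S_\infty$ is not compact and its orbits in $\Pi$ need not be closed, so $\Pi/S_\infty$ is not even obviously a metric space. The device above sidesteps this by comparing \emph{distances} rather than spaces: quotient distances never exceed the distances upstairs and the comparison angle at an apex is monotone in the opposite side, so curvature information is only ever needed in $\Pi$, where it comes from Toponogov together with the tail-freezing limit. What remains to be checked with care is the bundle of auxiliary facts flagged above — completeness and the length-space property of the components of $\cs$, existence of optimal matchings so that the lifts realise the distances exactly, and the (routine) stability of the $(1+3)$-point condition under $N\to\infty$.
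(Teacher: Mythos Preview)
Your proposal is correct and follows essentially the same strategy as the paper: lift the four configurations to the $\ell^2$-product via optimal matchings from the centre $\gamma$, invoke the curvature bound on the finite products $M^N$ (letting $N\to\infty$), and handle the cross-distances $d_\cs(\omega_j,\omega_k)$ --- which need not be realised by the chosen lifts --- via monotonicity of the comparison quantity in the opposite side. The only cosmetic difference is that the paper uses the Lebedeva--Petrunin $(1+3)$-point $\cosh$ inequality (Definition~\ref{def:Alexcurv}) in place of comparison angles, and the auxiliary facts you flag (existence of optimal matchings, geodesic property of the fibres) are supplied by Lemma~\ref{lem:existence of matching} and Corollary~\ref{cor:geo}.
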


From now on we will be concerned with lower bounds on the \emph{Ricci
  curvature}. They allow to control various analytic, stochastic and
geometric quantities, like the volume growth and the heat kernel. A
uniform lower bound $\Ric \geq K$ can be encoded in many different
ways. Let us recall some of them.

\begin{itemize}
\item[(BI)] \emph{Bochner's inequality:} for every smooth function
  $u:M\to\R$
  \begin{align*}
    \frac12\Delta\abs{\nabla u}^2 -\ip{\nabla u,\nabla\Delta u} \geq K
    |\nabla u|^2\;.
  \end{align*}

\item[(GE)] \emph{Gradient estimate:} for every smooth function $u$
  \begin{align*}
    |\nabla T^M_t u|^2\leq \e^{-2Kt}T^M_t|\nabla u|^2\;.
  \end{align*}
\end{itemize}

Here $T^M_t=\e^{t\Delta}$ denotes the heat semigroup on $M$. (BI) is
easily seen to be equivalent to $\Ric\geq K$ by noting that the left
hand side equals $\Ric[\nabla u] + \norm{\Hess u}^2_{HS}$. The
equivalence of (BI) and (GE) is due to a classic interpolation argument
of {\sc Bakry--\'Emery} \cite{BE85}.
 
Other ways of encoding a lower Ricci bound involve the action of the
(dual) heat semigroup on probability measures and the
$L^2$-transportation distance between probability measures. For
$\mu\in\cP(M)$ the probability measure $H^M_t\mu$ is defined via $\int
f \dd H^M_t\mu = \int T^M_tf \dd\mu$. Given $\mu_0,\mu_1\in\cP(M)$
their $L^2$-transportation distance associated to the Riemannian
distance $d$ is defined by
\begin{align*}
  W^2_{2,d}(\mu_0,\mu_1)~=~\inf\left\{\int d^2(x,y)\dd q(x,y)\right\}\;,
\end{align*}
where the infimum is taken over all couplings of $\mu_0,\mu_1$. Recall
also the relative entropy of a measure $\mu=\rho m$ w.r.t.\ the volume
measure $m$ given by $\ent(\mu|m)=\int\rho\log\rho\dd m$. Then, a lower
bound $\Ric\geq K$ is equivalent to

\begin{itemize}
\item[(WC)] \emph{$W_{2,d}$-contractivity:} for all $\mu_0,\mu_1\in\cP(M)$ and $t>0$:
  \begin{align*}
    W_{2,d}(H^M_t\mu,H^M_t\nu)~\leq~\e^{-Kt}W_{2,d}(\mu,\nu)\;.
  \end{align*}
\item[(GC)] \emph{Geodesic convexity of $\ent$:} for every
  (constant-speed) geodesic $(\mu_t)_{t\in[0,1]}$ in $(\cP(M),W_{2,d})$ and all $t\in[0,1]$:
  \begin{align*}
    \ent(\mu_s|m)\leq (1-s) \ent(\mu_0|m) + s\ent(\mu_1|m) - \frac{K}{2s(1-s)}W^2_{2,d}(\mu_0,\mu_1)\;.
  \end{align*}
\end{itemize}

These equivalences have been established in
\cite{vRenesseSturm:gradientestimates,CMS01}. Finally, (WC) and (GC)
can be captured in a single inequality

\begin{itemize}
\item[(EVI)] \emph{Evolution Variational Inequality:} for all $\mu,\sigma\in \cP(M)$ with finite
second moment and a.e. $t>0$:
\begin{align*}
  \ddt \frac12 W_{2,d}^2(H^M_t\mu,\sigma) + \frac{K}2
  W_{2,d}^2(H^M_t\mu,\sigma)~\leq~\ent(\sigma|m)-\ent(H^M_t\mu|m)\;.
\end{align*}
\end{itemize}

The last property (EVI) was first established in the Riemannian
setting in \cite{OW05,DS08}. It is also a way of stating that the heat
flow is the gradient flow of the entropy in the metric space
$(\cP(M),W_{2,d})$ and thus a reformulation of the celebrated result by
{\sc Jordan--Kinderlehrer--Otto} \cite{JKO}.

Notably, the property (GC) does not use the differential structure of
$M$ and can be formulated in the framework of metric measure
spaces. {\sc Sturm} \cite{sturm:gmms1} and {\sc Lott--Villani}
\cite{LV09} used this observation to define a notion of lower Ricci
curvature bound for metric measure spaces. The stronger property (EVI)
was studied on metric measure spaces in a series of papers by {\sc
  Ambrosio--Gigli--Savar\'e} \cite{AGS11a,AGS11b,AGS12}. There the
authors show the equivalence of (EVI) with suitable weak forms of (BI)
and (GE) for the canonical linear heat flow on such spaces.
\medskip\\
Unfortunately, most of this theory does not apply to the configuration
space since $(\cs,\dcs,\pi)$ is only an extended metric measure space,
the distance $\dcs$ can attain the value $\infty$. However, due to the
rich structure of $\cs$ we can establish suitable analogues of the
various manifestations of Ricci bounds.

Denote by $T^\cs_t=\e^{t\Delta^\cs}$ the heat semigroup on the
configuration space. For an absolutely continuous probability measure
$\mu\in\cP(\cs)$ with $\mu=f\pi$ define the dual semigroup
$H^\cs_t\mu=(T^\cs_t f)\pi$. Moreover, let now denote $W_{2,\dcs}$ the
$L^2$-transportation distance on $\cP(\cs)$ built from $d_\cs$. The
domain of the Dirichlet form $\cE$ will be denoted by $\cF$.

\begin{theorem}\label{thm:main lifting}
  Assume that $M$ has Ricci curvature bounded below by $K\in\R$. Then
  the following hold:
\begin{enumerate}
\item Bochner inequality: For all cylinder functions $F:\cs\to\R$ we have
   \begin{align*}
    \frac12\Delta^\cs\abs{\nabla^\cs F} -\ip{\nabla^\cs F,\nabla^\cs\Delta^\cs F} ~\geq~ K\ \abs{\nabla^\cs F}^2\;.    
  \end{align*}
\item Gradient estimate on $\cs$: For any function $F\in \cF$ we have
  \begin{align*}
    \Gamma^\cs (T^\cs_t F) ~\leq~ \e^{-2Kt} T^\cs_t \Gamma^\cs( F) \quad \pi-a.e.
\end{align*}
\item Wasserstein contraction: For all $\mu,\nu\ll \pi$ we
  have:
  \begin{align*}
    W_{2,\dcs}(H^\cs_t\mu,H^\cs_t\nu)~\leq~ \e^{-Kt}W_{2,\dcs}(\mu,\nu)\;.
  \end{align*}
\end{enumerate}
\end{theorem}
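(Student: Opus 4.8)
The plan is to prove the three items in the order (i) $\Rightarrow$ (ii) $\Rightarrow$ (iii); the only genuinely new ingredient is a pointwise $\Gamma_2$-computation on $\cs$, items (ii) and (iii) following from it by general principles whose validity in the (extended) configuration-space setting one has to check. For (i) I would work on the algebra $\cyl$ of cylinder functions $F(\gamma)=g(\ip{f_1,\gamma},\dots,\ip{f_N,\gamma})$ with $g$ smooth and $f_i\in C^\infty_c(M)$. One first records $\nabla^\cs F(\gamma)(x)=\sum_i\partial_ig\cdot\nabla f_i(x)$ for $x\in\gamma$, hence $\Gamma^\cs F(\gamma)=\sum_{x\in\gamma}\big|\sum_i\partial_ig\,\nabla f_i(x)\big|^2$, and $\Delta^\cs F(\gamma)=\sum_j\partial_jg\,\ip{\Delta f_j,\gamma}+\sum_{ij}\partial_{ij}g\,\ip{\ip{\nabla f_i,\nabla f_j},\gamma}$, and notes that $\cyl$ is stable under $\Gamma^\cs$ and $\Delta^\cs$, so $\Gamma_2^\cs F:=\tfrac12\Delta^\cs\Gamma^\cs F-\ip{\nabla^\cs F,\nabla^\cs\Delta^\cs F}$ is again an explicit cylinder function. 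The mechanism is to reorganise it around the one-particle functions $h^\gamma_x(y):=g(\ip{f_1,\gamma-\delta_x}+f_1(y),\dots,\ip{f_N,\gamma-\delta_x}+f_N(y))$ on $M$, which satisfy $\nabla h^\gamma_x(x)=\nabla^\cs F(\gamma)(x)$ and $\Delta^\cs F(\gamma)=\sum_{x\in\gamma}\Delta h^\gamma_x(x)$. Carrying out the differentiations I expect the splitting
\begin{align*}
  \Gamma_2^\cs F(\gamma)~=~\sum_{x\in\gamma}\Gamma_2^M\big[h^\gamma_x\big](x)\;+\;\sum_{\substack{x,x'\in\gamma\\x\neq x'}}\operatorname{tr}\!\big(A_x\,\Hess g\;A_{x'}\,\Hess g\big)\,,
\end{align*}
where $(A_x)_{ij}=\ip{\nabla f_i(x),\nabla f_j(x)}$ and $\Hess g$ is taken at $(\ip{f_1,\gamma},\dots)$. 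The first sum is bounded below by Bochner's formula on $M$, $\Gamma_2^M[h^\gamma_x](x)=\Ric_x[\nabla h^\gamma_x(x)]+\norm{\Hess h^\gamma_x(x)}^2_{HS}\geq K\abs{\nabla^\cs F(\gamma)(x)}^2$, so $\sum_{x\in\gamma}\Gamma_2^M[h^\gamma_x](x)\geq K\Gamma^\cs F(\gamma)$; the second sum is nonnegative term by term, since each $A_x$ is a Gram matrix, hence positive semidefinite, and $\operatorname{tr}(A_x\Hess g\,A_{x'}\Hess g)=\norm{A_x^{1/2}\Hess g\,A_{x'}^{1/2}}^2_{HS}\geq 0$. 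This last point is exactly where the discreteness of $\gamma$ enters --- for $N=1$ it is the elementary inequality $\ip{a,\gamma}^2\geq\ip{a^2,\gamma}$ for $a\geq 0$. The real cost of the step is the bookkeeping needed to verify the displayed identity, which I would carry out first for $F=g(\ip{f,\gamma})$ and then promote to the matrix case.

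For (ii), given Bochner on the core $\cyl$, I would run the Bakry--\'Emery interpolation: for $0\leq s\leq t$ put $\psi(s)=T^\cs_s(\Gamma^\cs(T^\cs_{t-s}F))$; then $\psi'(s)=2\,T^\cs_s(\Gamma_2^\cs(T^\cs_{t-s}F))\geq 2K\psi(s)$ by (i), whence $\psi(t)\geq\e^{2Kt}\psi(0)$, i.e.\ $\Gamma^\cs(T^\cs_tF)\leq\e^{-2Kt}T^\cs_t\Gamma^\cs F$. The work here is functional-analytic rather than geometric: one must verify that $\cyl$ is a core for $\Delta^\cs$, that $T^\cs_t$ regularises enough for the $\Gamma_2$-calculus to be legitimate along the interpolation, and then pass from the core to all $F\in\cF$ by density --- points that are tractable thanks to the chaos/Fock structure of $L^2(\cs,\pi)$ inherited from $M$, under which $\Delta^\cs$ is the second quantisation of $\Delta^M$ and $T^\cs_t$ the independent-particle flow. (The same structure gives an alternative proof of (ii) that bypasses Bochner on $\cs$, by differentiating $T^\cs_tF(\gamma)=\E[F(\Gamma^\gamma_t)]$ in the initial configuration and invoking the gradient estimate (GE) on $M$ particle by particle.)

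For (iii), since $\dcs$ is the intrinsic distance of $\cE$ --- the non-normalised $L^2$-transport distance between the configurations regarded as measures on $M$ --- I would deduce the Wasserstein contraction from (ii) by Kuwada's duality, which turns the pointwise gradient estimate on functions into $W_{2,\dcs}$-contraction of the dual semigroup $H^\cs_t$. The point that needs care is that $(\cs,\dcs,\pi)$ is only an \emph{extended} metric measure space, so the Hopf--Lax / Kantorovich duality behind the argument must be run allowing the value $+\infty$; when $W_{2,\dcs}(\mu,\nu)=\infty$ there is nothing to prove, and otherwise everything localises to pairs at finite distance, which $H^\cs_t$ preserves. A more hands-on alternative uses only property (WC) on $M$: it suffices to prove the contraction for Dirac masses $\delta_\gamma,\delta_\omega$ on $\cs$; picking an optimal matching $x_i\leftrightarrow y_{\sigma(i)}$ with $\dcs(\gamma,\omega)^2=\sum_i d^2(x_i,y_{\sigma(i)})$ and evolving each matched pair by a $W_{2,d}$-optimal coupling of $H^M_t\delta_{x_i}$ and $H^M_t\delta_{y_{\sigma(i)}}$ (so $\E\,d^2\leq\e^{-2Kt}d^2(x_i,y_{\sigma(i)})$ by (WC)), independently over $i$, yields a coupling of $H^\cs_t\delta_\gamma$ and $H^\cs_t\delta_\omega$ of cost $\leq\e^{-2Kt}\dcs(\gamma,\omega)^2$, and the general case follows by integrating against an optimal plan for $(\mu,\nu)$ and using linearity of $H^\cs_t$. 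In either route the genuine obstacle is not the contraction estimate itself but the measurable-selection and functional-analytic care imposed by the extended metric: measurability of optimal matchings and couplings, the core and regularising properties of $T^\cs_t$, and the treatment of infinite distances.
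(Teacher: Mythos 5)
Your proposal is correct in outline, but it organises the argument rather differently from the paper, and your preferred chain (i) $\Rightarrow$ (ii) $\Rightarrow$ (iii) carries a gap that the paper itself flags.

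For (i), your decomposition of $\Gamma_2^\cs F$ into diagonal one-particle Bochner terms plus a nonnegative off-diagonal Hilbert--Schmidt contribution is a correct, though more computational, variant of the paper's argument. The paper (Proposition \ref{prop:Bochner-cylinder}) fixes a compact set $K$ containing all supports, sets $N=\gamma(K)$, defines $\psi_i(\xx)=\sum_\alpha\phi_i(x_\alpha)$ on $M^N$, and proves the clean identity $\Gamma_2^\cs(F)(\gamma)=\Gamma_2^{(N)}(g(\psi))(\xx)$, after which Bochner on $M^N$ finishes. Your displayed splitting is exactly the decomposition of $\norm{\Hess^{(N)}(g\circ\psi)}^2_{HS}$ on $M^N$ into block-diagonal pieces (which pair with $\Ric$ to give the one-particle $\Gamma_2^M[h^\gamma_x]$) and off-diagonal blocks $\operatorname{tr}(A_x\Hess g\,A_{x'}\Hess g)\geq 0$. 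The two are equivalent; yours buys explicitness about where positivity comes from, the paper's reduction to $M^N$ avoids the bookkeeping.

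For (ii) and (iii) the difference is substantive. The paper does \emph{not} derive (ii) from (i) by Bakry--\'Emery interpolation, and Remark \ref{rem:Bochner-Gradest} explains why: the interpolation would require the Bochner inequality to hold for $T^\cs_{t-s}F$, which is not a cylinder function, and extending \eqref{eq:Bochner-cylinder} to a suitably large class in a pointwise or strong-enough-weak form is nontrivial (Proposition \ref{prop:Bochner} gives only an integrated form tested against auxiliary $G$). Instead, the paper proves (ii) directly by lifting (GE) on $M$ through the product representation $T^\N_t=\prod_j T^j_t$ of Theorem \ref{thm:sg-welldef}: apply (GE) in each coordinate, use Jensen in the remaining directions, sum over particles, and pass from $M^\N$ to $\cs$ via Lemma \ref{lem:lip-gamma} and a density argument. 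This is precisely the "alternative" you mention at the end of your (ii) paragraph, and it is in fact the paper's route. The same holds for (iii): the paper does not invoke Kuwada duality but lifts (WC) directly, choosing an optimal matching $(x_i)\leftrightarrow(y_i)$ between configurations, coupling each pair of heat kernels $W_{2,d}$-optimally, and tensorizing --- again your "hands-on alternative." Net effect: in the paper (i), (ii), (iii) are each lifted from $M$ independently and are logically parallel; in your scheme they form a dependency chain whose first link, the interpolation, is exactly the step the authors deemed not worth pushing through in this setting.
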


The Bochner inequality, the gradient estimate and the Wasserstein
contraction will be derived by a suitable ``lifting'' of the
corresponding statements on $M$. For the latter two this relies on a
representation of the heat semigroup $T_t^\cs$ as an infinite product
of independent copies of the heat semigroup on $M$ that will be
established in Theorem \ref{thm:sg-welldef}. To our knowledge this
identification is new in the present generality assuming a (possibly
negative) Ricci bound.

Somehow surprisingly, there does not seem to be a straightforward way
to ``lift'' the EVI or the convexity of the relative entropy from $M$
to $\cs.$ Nevertheless, using a careful approximation procedure it is
possible to adapt the techniques of \cite{AGS12} to the setting of the
configuration space to derive it from the gradient estimate
established in Theorem \ref{thm:main lifting}.

\begin{theorem}\label{thm:main EVI}
  Assume that $M$ has Ricci curvature bounded below by $K\in\R$. Then
  the heat flow is the gradient flow of the entropy in the sense of
  the $EVI_K$: For all $\mu,\sigma\in\cP(\cs)$ with
  $\ent(\sigma)<\infty$ and $W_{2,\dcs}(\mu,\sigma)<\infty$ and a.e. $t>0$:
 \begin{align*}
    \ddt \frac12 W_{2,\dcs}^2(H^\cs_t\mu,\sigma) + \frac{K}2 W_{2,\dcs}^2(H^\cs_t\mu,\sigma)~\leq~\ent(\sigma|\pi)-\ent(H^\cs_t\mu|\pi)\;.
  \end{align*}
\end{theorem}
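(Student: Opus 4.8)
The plan is to follow the strategy of Ambrosio--Gigli--Savaré \cite{AGS12}, which shows that the gradient estimate (Theorem~\ref{thm:main lifting}(ii)) upgrades to the $EVI_K$ characterization of the heat flow as entropy gradient flow, but to carry it out in the extended metric measure setting $(\cs,\dcs,\pi)$ where $\dcs$ may take the value $+\infty$. The first step is to establish the basic identifications required to even make sense of the statement: that $(T^\cs_t)$ is the $L^2(\pi)$-Markov semigroup generated by $\cE$, that $H^\cs_t$ is well-defined as a flow on $\cP(\cs)$, and that the entropy $\ent(\cdot|\pi)$ is lower semicontinuous with the heat flow dissipating it, along with a Fisher-information/de~Bruijn identity $\ddt\ent(H^\cs_t\mu|\pi) = -\cI(H^\cs_t\mu|\pi)$. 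Because $\dcs$ splits $\cs$ into equivalence classes of configurations at finite distance, the condition $W_{2,\dcs}(\mu,\sigma)<\infty$ means $\mu$ and $\sigma$ are carried by the same ``finite-distance component''; one should check the heat flow preserves this component (infinitely many particles move, but each by an independent Brownian displacement, so the total displacement cost stays finite in $L^2$ sense) — this uses the product representation of $T^\cs_t$ from Theorem~\ref{thm:sg-welldef}.

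The core of the argument is the ``action estimate'' of \cite{AGS12}: one fixes $\sigma$ and $\mu$, and for the curve $t\mapsto \mu_t := H^\cs_t\mu$ one wants to bound $\ddt\frac12 W_{2,\dcs}^2(\mu_t,\sigma)$. The key technical device is Kuwada's duality lemma, which turns the pointwise gradient estimate $\Gamma^\cs(T^\cs_t F)\le \e^{-2Kt}T^\cs_t\Gamma^\cs(F)$ into a Wasserstein regularization estimate for $H^\cs_t$; in the $K$-dependent form this already gives Theorem~\ref{thm:main lifting}(iii), but one needs the finer, time-differentiated version controlling the metric speed $|\dot\mu_t|$ of the heat flow curve in $(\cP(\cs),W_{2,\dcs})$ by the Fisher information, i.e. $|\dot\mu_t|^2\le \cI(\mu_t|\pi)$. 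Next, along a geodesic — here one must work with a geodesic, or an almost-geodesic, in the finite-distance component joining $\mu_t$ to $\sigma$; existence of $W_{2,\dcs}$-geodesics between measures in the same component should follow from the fact that each such component is a genuine (complete, separable, geodesic) metric-measure space, essentially an infinite product of Wasserstein spaces over $M$ — one differentiates the entropy and uses geodesic $K$-convexity. But geodesic convexity of $\ent(\cdot|\pi)$ is exactly what we do \emph{not} have directly; instead, following \cite{AGS12}, one uses the ``$EVI$ from $GE$'' mechanism: combine the Hamilton--Jacobi/Hopf--Lax semigroup $Q_s$ acting on the Kantorovich potentials with the gradient estimate, and use the $L^2$ gradient-flow interpretation, to derive the differential inequality directly without passing through $(GC)$.

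Concretely, the heart is the identity/inequality obtained by testing the heat flow against the Hopf--Lax evolution of an optimal Kantorovich potential $\varphi$ for $(\mu_t,\sigma)$: writing $\varphi_s = Q_s\varphi$ one has $\frac12 W_{2,\dcs}^2(\mu_t,\sigma) = \int Q_1\varphi\dd\mu_t - \int\varphi\dd\sigma$ (Kantorovich duality), and differentiating in $t$ while exploiting $\partial_s Q_s\varphi + \frac12|\nabla^\cs Q_s\varphi|^2\le 0$ (subsolution of Hamilton--Jacobi on $\cs$, which must be verified on cylinder functions and then extended), together with $\ddt\int F\dd\mu_t = \int \Delta^\cs F\dd\mu_t$ and the gradient estimate, produces after an ``Otto calculus''/integration-by-parts computation exactly the bound $\ddt\frac12 W_{2,\dcs}^2(\mu_t,\sigma) + \frac K2 W_{2,\dcs}^2(\mu_t,\sigma)\le \ent(\sigma|\pi)-\ent(\mu_t|\pi)$. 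All of this is first done for nice $\mu,\sigma$ (finitely many particles, smooth bounded densities, $\ent<\infty$, finite second moments, living in a single finite-distance component) and then the general case follows by an approximation argument: truncating the number of particles, mollifying densities, and passing to the limit using lower semicontinuity of $\ent$ and of $W_{2,\dcs}$, plus the contraction estimate Theorem~\ref{thm:main lifting}(iii) to control the approximation in Wasserstein distance.

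The main obstacle I expect is precisely this approximation/extension step in the infinite-dimensional extended-metric setting: the Hamilton--Jacobi subsolution property and the duality formula for $W_{2,\dcs}^2$ must be established on $\cs$ despite $\dcs\equiv\infty$ between most pairs, the Hopf--Lax semigroup $Q_s$ has to be defined and shown to regularize within a finite-distance component, and one must ensure that the cylinder-function calculus (where $\nabla^\cs$, $\Delta^\cs$, $\Gamma^\cs$ are transparent) suffices to pin down the flow on the relevant dense class and that all the quantities — metric speed, Fisher information, entropy — behave well under the particle-truncation limit. In short: transporting the \cite{AGS12} machinery past the failure of finiteness of the distance and past the lack of local compactness is the crux; once the finite-distance components are understood as honest Polish geodesic metric-measure spaces carrying a product structure, the rest is a (lengthy but routine) adaptation.
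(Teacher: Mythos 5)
Your high-level plan matches the paper's proof: follow the Ambrosio--Gigli--Savar\'e ``gradient estimate implies $EVI_K$'' machinery---Kantorovich duality, the Hopf--Lax semigroup with its Hamilton--Jacobi subsolution property, the identification of $\cE$ with the Cheeger energy, and the two-parameter action estimate of \cite[Thm.~4.16]{AGS12}---with the curvature input supplied by Theorem~\ref{thm:main lifting}(ii), and you correctly locate the crux in the regularization step needed because $\dcs$ is an extended distance.

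However, the concrete approximation you propose would not work. Reducing to $\mu,\sigma$ that are simultaneously supported on configurations with ``finitely many particles'' and have $\ent<\infty$ is impossible: since $m(M)=\infty$ one has $\pi(\cs^{(\infty)})=1$, so any measure carried by finite configurations is singular with respect to $\pi$ and has infinite relative entropy. More importantly, you do not identify the point where the configuration-space argument genuinely departs from \cite{AGS12}: the statement only assumes $\ent(\sigma)<\infty$ and $W_{2,\dcs}(\mu,\sigma)<\infty$, so $\mu$ may well be mutually singular with $\pi$, in which case $H^\cs_t\mu$ is not even \emph{a priori} defined. The paper handles this with Lemma~\ref{lem:ac-approx}, proved in the appendix by an explicit localized matching construction (keep the $\nu$-particles outside a large ball, redistribute inside, and control the entropy fiberwise): every $\mu\in\cP_e$ is a $W_{2,\dcs}$-limit of measures in $D(\ent)$, and together with the Wasserstein contraction this extends $H^\cs_t$ to $\cP_e$ and produces the bounds needed for the regular-curve approximation (Lemma~\ref{lem:reg-curves}, which mollifies by the semigroup and by time-convolution, not by particle truncation). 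Apart from these points, the action-estimate computation (Lemmas~\ref{lem:derivative-HL}--\ref{lem:speed}, Proposition~\ref{prop:action-est}) and the final minimization over connecting curves proceed essentially as you sketch.
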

 
Note that a priori the dual semigroup is only defined on measures with
density. Using a careful approximation technique given in Lemma
\ref{lem:ac-approx} and Wasserstein contractivity we can extend it to
all measures at finite distance to the domain of the entropy. This is
also the maximal set of measures for which EVI can be stated. As a
direct consequence we obtain

\begin{corollary}
  The entropy is (strongly) $K-$convex on
  $\big(\cP(\cs),W_{2,\dcs}\big)$. More precisely, for all
  $\mu_0,\mu_1\in D(\ent)$ with $W_{2,\dcs}(\mu_0,\mu_1)<\infty$ and any
  geodesic $(\mu_s)_{s\in[0,1]}$ connecting them we have for all
  $s\in[0,1]$:
  \begin{align*}
    \ent(\mu_s|\pi)~\leq~(1-s)\ent(\mu_0|\pi) + s \ent(\mu_1|\pi) - \frac{K}{2}s(1-s)W^2_{2,\dcs}(\mu_0,\mu_1)\;.
  \end{align*}
\end{corollary}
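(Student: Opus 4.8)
The plan is to derive the corollary directly from Theorem~\ref{thm:main EVI} by the standard argument that turns an $EVI_K$ gradient flow into geodesic $K$-convexity of the driving functional. The key point is that an $EVI_K$-flow is automatically $K$-contractive and, more importantly, that the evaluation of the $EVI_K$ inequality along a geodesic yields the convexity estimate; this is a purely metric argument (cf. \cite{AGS11a,DS08}) that does not use any further structure of $\cs$, so it applies verbatim once Theorem~\ref{thm:main EVI} is in hand.

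Concretely, fix $\mu_0,\mu_1\in D(\ent)$ with $W_{2,\dcs}(\mu_0,\mu_1)<\infty$ and let $(\mu_s)_{s\in[0,1]}$ be a constant-speed geodesic connecting them; note $\ent(\mu_s)<\infty$ for all $s$ by the (already established, via the approximation of Lemma~\ref{lem:ac-approx}) fact that the entropy is finite along such geodesics, and $W_{2,\dcs}(\mu_0,\mu_s), W_{2,\dcs}(\mu_1,\mu_s)<\infty$ since the geodesic has finite length. First I would apply Theorem~\ref{thm:main EVI} with $\mu=\mu_0$ and test measure $\sigma=\mu_s$, and again with $\mu=\mu_1$ and $\sigma=\mu_s$. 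Writing $\phi(t)=\tfrac12 W_{2,\dcs}^2(H^\cs_t\mu_0,\mu_s)$ and $\psi(t)=\tfrac12 W_{2,\dcs}^2(H^\cs_t\mu_1,\mu_s)$, the two inequalities read
\begin{align*}
  \phi'(t) + K\phi(t) &\leq \ent(\mu_s|\pi) - \ent(H^\cs_t\mu_0|\pi)\;,\\
  \psi'(t) + K\psi(t) &\leq \ent(\mu_s|\pi) - \ent(H^\cs_t\mu_1|\pi)\;,
\end{align*}
for a.e.\ $t>0$. Integrating the $EVI_K$ from $t=0$ gives the discrete estimate
\begin{align*}
  \tfrac12 W_{2,\dcs}^2(H^\cs_t\mu_i,\mu_s) - \tfrac12 W_{2,\dcs}^2(\mu_i,\mu_s) + K\!\int_0^t \tfrac12 W_{2,\dcs}^2(H^\cs_r\mu_i,\mu_s)\dd r \leq t\big(\ent(\mu_s|\pi) - \ent(\mu_i|\pi)\big)\;,
\end{align*}
for $i=0,1$, using lower semicontinuity of the entropy along the heat flow (which follows from contractivity of $H^\cs_t$ in $W_{2,\dcs}$ and lower semicontinuity of $\ent$). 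Dividing by $t$ and letting $t\downarrow 0$, together with the elementary fact that for an $EVI_K$-flow $t\mapsto W_{2,\dcs}(H^\cs_t\mu_i,\mu_s)$ has right derivative at $0$ bounded by $-\, d_W(\mu_i,\mu_s)\cdot(\text{metric slope of }\ent)$ — equivalently, using the de Giorgi/Daneri--Savar\'e argument — one obtains, after combining the $i=0$ weighted by $(1-s)$ and the $i=1$ weighted by $s$ and using that along a constant-speed geodesic $W_{2,\dcs}(\mu_0,\mu_s) = sW_{2,\dcs}(\mu_0,\mu_1)$ and $W_{2,\dcs}(\mu_1,\mu_s) = (1-s)W_{2,\dcs}(\mu_0,\mu_1)$, the asserted inequality
\begin{align*}
  \ent(\mu_s|\pi) \leq (1-s)\ent(\mu_0|\pi) + s\ent(\mu_1|\pi) - \tfrac{K}{2}s(1-s)W^2_{2,\dcs}(\mu_0,\mu_1)\;.
\end{align*}

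The only genuinely delicate point is that the abstract $EVI\Rightarrow$ convexity implication is usually stated for honest metric spaces, whereas $(\cP(\cs),W_{2,\dcs})$ is only an \emph{extended} metric space and the semigroup $H^\cs_t$ is \emph{a priori} defined only on absolutely continuous measures; however, both issues are harmless here. The restriction to the finite-distance component $\{\nu: W_{2,\dcs}(\nu,\mu_0)<\infty\}$ is a genuine length space on which the geodesic $(\mu_s)$ lives entirely, and by the approximation in Lemma~\ref{lem:ac-approx} together with Wasserstein contractivity the flow $H^\cs_t$ is well-defined and continuous on this component; moreover $\ent(\mu_s)<\infty$ ensures all terms appearing are finite. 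Hence the classical argument goes through on this component without change. I do not expect any essential obstacle beyond bookkeeping: the real work has already been done in Theorem~\ref{thm:main EVI}, and this corollary is the routine metric-geometry consequence that $EVI_K$-gradient flows force geodesic $K$-convexity of the functional.
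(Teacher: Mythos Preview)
Your approach has the roles of $\mu$ and $\sigma$ reversed, and this is not a cosmetic issue. In Theorem~\ref{thm:main EVI} the standing assumption is $\sigma\in D(\ent)$; by choosing $\sigma=\mu_s$ you are assuming $\ent(\mu_s|\pi)<\infty$, which is precisely the content of the corollary. Your appeal to Lemma~\ref{lem:ac-approx} does not help here: that lemma only says that any measure in $\cP_e$ can be $W_2$-approximated by measures of finite entropy, not that geodesics between finite-entropy endpoints stay in $D(\ent)$.

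Even granting $\ent(\mu_s)<\infty$, your combination does not close. With your choice the right-hand side after weighting by $(1-s)$ and $s$ is $\ent(\mu_s)-\big[(1-s)\ent(\mu_0)+s\ent(\mu_1)\big]$, so $\ent(\mu_s)$ sits on the \emph{upper} side of the inequality; to extract an upper bound on it you would need the weighted derivative term at $t=0$ to be nonnegative, and there is no reason for that. The Daneri--Savar\'e argument you invoke (and which the paper uses, following \cite{DS08}) runs the other way: one flows from the intermediate point, taking $\mu=\mu_s$ and $\sigma=\mu_0,\mu_1$ as test measures --- this is legitimate because only $\mu_s\in\cP_e$ is needed to start the flow. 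After multiplying the EVI by $\e^{Kt}$, integrating, and taking the $(1-s)/s$ combination, the elementary inequality $(1-s)a^2+sb^2\geq s(1-s)(a+b)^2$ and the triangle inequality bound the mixed $W_2$-terms from below by $\tfrac{\e^{Kt}-1}{2}s(1-s)W_2^2(\mu_0,\mu_1)$; dividing by $(\e^{Kt}-1)/K$ and letting $t\searrow0$, lower semicontinuity of the entropy gives $\liminf_t\ent(H_t^\cs\mu_s)\geq\ent(\mu_s)$ and hence the convexity inequality, with no a~priori finiteness of $\ent(\mu_s)$ required.
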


In particular, we see that the $(\cs,d_\cs,\pi)$ is an extended metric
measure space satisfying the synthetic Ricci bound CD$(K,\infty)$ in
the sense of {\sc Sturm} and {\sc Lott--Villani}.

Since the configuration space naturally appears (see e.g.\
\cite{AKR98b, Os12, Os13}) as the state space for infinite systems of
interacting Brownian motions, our results can be interpreted as a
first step in order to make tools from optimal transportation
available for infinite particle systems. In fact, for the case of no
interaction Theorem \ref{thm:main EVI} is the realization of the
famous heat flow interpretation of {\sc Jordan--Kinderlehrer--Otto}
for an infinite system of Brownian motions. It is a challenge for
future work to incorporate interactions in this picture.

\begin{remark}\label{rem:weighted case}
  It would be natural to consider more generally as base space a
  \emph{weighted} Riemannian manifold $(M,d,\e^{-V}m)$, with
  $V:M\to\R$ say of class $C^2$, and equip the configuration space
  $(\cs,d_\cs)$ with the Poisson measure $\pi_V$ built from the
  reference measure $\e^{-V}m$. This corresponds to a system of
  independent Brownian motions with drift. We expect that all the
  results presented here continue to hold under the assumption of a
  lower bound of the weighted Ricci curvature
  \begin{align*}
    \Ric +\Hess V~\geq~ K\;.
  \end{align*}
  The only thing that does not adapt immediately is the control on the
  tail of the heat kernel in Lemma \ref{lem:HK-tail} needed for the
  explicit representation of the heat semigroup. In fact, the validity
  of such a heat kernel bound under weighted Ricci bounds is
  interesting in itself and seems to be open in this generality. Since
  settling this question is not in the scope of this paper we chose to
  work with unweighted manifolds.
\end{remark}

\subsection*{Connection to the literature}
Even though the article \cite{AKR98} triggered off an enormous amount
of research, the curvature of the ``lifted'' geometry on the
configuration space has - to our knowledge - not yet been
analyzed. {\sc Privault} \cite{Pri01} derived a Weitzenb\"ock type
formula on the configuration space; however, his analysis is based on
a different geometry which does not directly relate to the geometry
introduced in \cite{AKR98}.

Spaces satisfying (synthetic) lower Ricci curvature bounds are
currently a hot topic of research and many impressive results have
been obtained, e.g.\ see \cite{AGS11a, AGS11b, AGS12, EKS}. However,
most of the applications and examples are finite dimensional. So far
the Wiener space was the only known example of a truly infinite
dimensional $\mathsf{CD}$ space. Recently, also path spaces over a Riemannian
manifold have been investigated by {\sc Naber} \cite{Na13} where he
characterizes simultaneous lower and upper Ricci curvature bounds via
gradient estimates and spectral gap estimates on the path space.

The geometry on the configuration space is very similar to the
geometry of the Wasserstein space. However, due to the fact that every
point in a configuration gets mass at least one the lower sectional
curvature bound is stable even for negative lower bounds in contrast
to the Wasserstein space, see Proposition 2.10 in
\cite{sturm:gmms1}. Moreover, the Wasserstein space together with the
entropic measure is known to not admit any Ricci lower bounds
\cite{Cho12} which is again in sharp contrast to Theorem \ref{thm:main
  lifting} and Theorem \ref{thm:main EVI}.

\subsection*{Outline}

In Section \ref{sec:prelim} we start by explaining the ``lifted''
geometry on $\cs$. Using a version of Rademachers Theorem on the
configuration space we show that differential structure and the metric
structure fit together by proving that the Cheeger energy and the
Dirichlet form coincide. Subsequently, we discuss the heat semigroup
in some detail and give a useful point wise representation in terms of
the semigroup on the base space $M$. We close this section by
collecting some tools we need in the proof of the main theorems.

In Section \ref{sec:OT}, we collect and adapt results on optimal
transport to the configuration space setting.

In Section \ref{sec:curv-manifest}, we prove Theorem \ref{thm:main
  lifting}, the different manifestations of curvature bounds which can
be deduced by ``lifting'' of the corresponding results on $M$.

Finally in Section \ref{sec:RCD}, we show that the $EVI_K$ holds on
the configuration space, i.e.\ we prove Theorem \ref{thm:main EVI}.

The Appendix contains the proof of the approximation result needed to
extend the dual semigroup beyond measures with density.

\subsection*{Acknowledgements}

The authors would like to thank Theo Sturm and Fabio Cavalletti for several fruitful discussions on the subject of this paper.

\section{Preliminaries}
\label{sec:prelim}

\subsection{Differentiable structure of configuration space}
\label{sec:diff}

Let $M$ be a smooth complete and connected Riemannian manifold. We
denote by $\ip{\cdot,\cdot}_x$ the metric tensor at $x$, $d$ is the
Riemannian distance and $m$ the volume measure. We assume that $M$ is
non-compact and $m(M)=\infty$.\footnote{The results also hold in the case that $M$ is compact. However, they can be derived much easier.} The configuration space $\cs$ over the
base space $(M,d,m)$ is the set of all locally finite counting
measures, i.e.\
\begin{align*}
  \cs:=\{\gamma\in\mathcal M(M): \gamma(K)\in\N_0\ \text{ for all }
  K\subset M \text{ compact }\}\;.
\end{align*}
Each $\gamma\in\cs$ can be represented as $\gamma=\sum_{i=1}^n
\delta_{x_i}$ for some $n\in\N_0\cup \{\infty\},$ and suitable points
$x_i$ in $M$. Here $n=0$ corresponds to the empty configuration. To be
more precise, let $\cA$ be the set of finite and infinite sequences in
$M$ without accumulation points and let
\begin{align*}
  l:\cA\to\cs,\ (x_1,x_2,,\ldots)=\mathbf{x}\mapsto \gamma=\sum_{i} \delta_{x_i}\;.
\end{align*}
Then any $\mathbf{x}\in l^{-1}(\gamma)$ is called a labeling of
$\gamma$. We can decompose the configuration space as $\cs=\cup_{n\in
  \N_0\cup\{\infty\}} \cs^{(n)}$ where $\cs^{(n)}=\{\gamma\in\cs:
\gamma(M)=n\}.$

We endow the configuration space with the vague topology which makes
it a Polish space as a closed subset of a Polish space (e.g.\ see
\cite[Theorem A2.3]{kallenberg2002foundations}). This means that
$\gamma_n\to\gamma$ if and only if $\int f\ d\gamma_n\to\int f\
d\gamma =:\gamma(f)$ for all $f\in C_c(M).$

There is a natural probability measure on $\cs$, the Poisson measure
$\pi$. It can be defined via its Laplace transform
$$ \int \exp(\gamma(f))\ \dd\pi(\gamma) = \exp\left(\int \exp(f(x))-1\ \dd m(x)\right).$$
Equivalently, we can characterize $\pi$ as follows: for any choice of
disjoint Borel sets $A_1,\ldots ,A_k\subset M$ with $m(A_i)<\infty$
the family of random variables $\gamma(A_1),\ldots,\gamma(A_k)$ is
independent and $\gamma(A_i)$ is Poisson distributed with parameter
$m(A_i)$. In particular, given a Borel set $A$ of finite volume and
condition on the event that $\gamma(A)=n<\infty$ then the $n$ points
are iid uniformly distributed in $A$.

Note that the analysis of $\cs$ is most interesting when $M$ in
non-compact and $m(M)=\infty$ since in this case configurations consist
typically of infinitely many points, i.e.\ we have $\pi(\cs^{(n)})=0$
for all $n\in\N$ and $\pi(\cs^{(\infty)})=1$.

The \emph{tangent space} $T_\gamma\cs$ of $\cs$ at a configuration
$\gamma$ is defined to be the space of all $\gamma$-square integrable
sections of the tangent bundle $TM$ of $M$, i.e.\
$$T_\gamma\cs=\{V:M\to TM, \int_M \langle V,V\rangle_x\ \dd\gamma(x)<\infty\}.$$
Equivalently, we can write $T_\gamma\cs=L^2(\bigoplus_{x\in\gamma} T_xM, \gamma).$
We will denote the scalar product on $T_\gamma\cs$ by 
$$ \langle V_1,V_2 \rangle_\gamma := \int_M \langle V_1(x),V_2(x)\rangle_x\ \dd\gamma(x).$$
We also sometimes write $\|V\|^2_\gamma:=\langle V, V\rangle_\gamma.$
Note that this is a non-trivial structure. The tangent spaces vary
with $\gamma$ even if $M$ is Euclidean.

Next we introduce an important class of ``test functions''. A smooth
\emph{cylinder function} is a function $F:\cs\to\R$ that can be
written as
\begin{align*}
  F(\gamma)=g_F(\gamma(\phi_1),\ldots,\gamma(\phi_n))\;,
\end{align*}
for some $n\in\N, g_F\in C_b^\infty(\R^n)$ and
$\phi_1,\ldots,\phi_n\in C_c^\infty(M).$ The set of all smooth
cylinder functions will be denoted by $\mathsf{Cyl}^\infty(\cs).$ For
$F\in \mathsf{Cyl}^\infty(\cs)$ we define the \emph{gradient} of $F$ by
\begin{align*}
  \nabla^\cs F(\gamma;x):= \sum_{i=1}^n \partial_i
  g_F(\gamma(\phi_1),\ldots,\gamma(\phi_n)) \nabla \phi_i(x) \quad
  \gamma\in \cs,\ x\in M\;.
\end{align*}
Here $\partial_i$ denotes the partial derivative in the i-th direction
and $\nabla$ denotes the gradient on $M$. Alternatively, we can define
the gradient using directional derivatives. To this end denote the set
of all smooth and compactly supported vector fields on $M$ by
$\mathcal V_0(M)$. For $V\in\mathcal V_0(M)$ let $\psi_t$ be the flow
of diffeomorphisms generated by $V$. For fixed $\gamma\in\cs$, this
generates a curve $\psi_t^*\gamma=\gamma\circ \psi_t^{-1}, t\in\R$ on
$\cs$. Then we have for $F\in\cyl$ and $\gamma\in\cs$
\begin{align*}
  \left. \frac{\dd}{\dd t}\right|_{t=0} F(\psi_t^*\gamma)=\langle
  \nabla^\cs F(\gamma),V\rangle_\gamma =:\nabla^\cs_VF(\gamma)\;.
\end{align*}
Similarly, we can introduce the \emph{divergence} $\mbox{div}^\cs$ on
$\cs$. For $F_i\in \cyl$ and $V_i\in \mathcal V_0(M)$ we define for
$\gamma\in \cs$
\begin{align*}
  \mbox{div}^\cs\left(\sum_{i=1}^n F_i\cdot V_i\right)(\gamma):=
  \sum_{i=1}^n \nabla^\cs_{V_i}F_i(\gamma) + F_i(\gamma)\cdot
  \gamma(\mbox{div}^M(V_i))\;.
\end{align*}
It is proven in \cite{AKR98} that the Poisson measure $\pi$ is (up to
the intesity) the unique measure such that $\mbox{div}^\cs$ and
$\nabla^\cs$ are adjoint in $L^2(\pi)$. We also define the
\emph{Laplace operator} $\Delta^\cs:= \mbox{div}^\cs\nabla^\cs$.

With this differential structure at hand we can talk about
\emph{Dirichlet forms}. For a cylinder function $F\in \cyl$ we define
the pre-Dirichlet form
\begin{align*}
  \cE(F,F)\ :=\ \int \langle \nabla^\cs F,\nabla^\cs F\rangle_\gamma\
  \pi(d\gamma)\;.
\end{align*}
It is shown in \cite{AKR98} that $(\cE,\mathsf{Cyl}^\infty(\cs))$ is
closable and its closure $(\cE,\mathcal F)$ is a Dirichlet form. By
\cite[Proposition 1.4 (iv)]{RS99}, for every $F\in\cF$ there exists a
measurable section $\nabla^\cs F:\cs\to T\cs$ such that
$\cE(F)=\int\abs{\nabla^\cs F}_\gamma^2\dd\pi(\gamma)$. Thus $\cE$
admits a carr\'e du champs operator $\Gamma^\cs:\cF\to L^1(\cs,\pi)$ given
by $\Gamma^\cs(F)(\gamma)=\abs{\nabla^\cs F}^2_\gamma$.  

We will denote the semigroup in $L^2(\cs,\pi)$ associated to the
Dirichlet form $(\cE,\cF)$ by $T^\cs_t=\exp(t\Delta^\cs)$ and call it the
\emph{heat semigroup} on $\cs$. Its generator is the Friedrichs
extension of $\Delta^\cs$.

\subsection{Metric structure of $\cs$ and compatibility}
\label{sec:coincidence-diff-metric}

A natural distance on the configuration space is given by the
non-normalized $L^2$-transportation distance, defined for two measures
$\gamma,\eta\in\cs$ by
\begin{align*}
  \dcs^2(\gamma,\eta)=\inf_{q\in\mathsf{Cpl}(\gamma,\eta)} \int
  d^2(x,y)\ q(dx,dy)\;,
\end{align*}
where $\mathsf{Cpl}(\gamma,\eta)$ denotes the set of all couplings
between $\gamma$ and $\eta$. Note that
$\dcs:\cs\time\cs\to[0,+\infty]$ is an \emph{extended} distance,
i.e.\ it is symmetric, vanishes precisely on the diagonal and satisfies
the triangle inequality. It can take the value $+\infty$, e.g. we have
$\dcs(\gamma,\eta)=\infty$ if $\gamma\in\cs^{(n)}$ and
$\eta\in\cs^{(m)}$ with $m\neq n.$

We denote by $C(\cs)$ the set of all continuous functions on $\cs$
w.r.t.\ the vague topology. We say that a function $F:\cs\to\R$ is
$\dcs$-Lipschitz iff
\begin{align}\label{eq:lip}
  \abs{F(\gamma)-F(\eta)}\leq C \dcs(\gamma,\eta)\quad\forall \gamma,\eta\in\cs\;,
\end{align}
for some constant $C\geq0$. The set of all $\dcs$-Lipschitz functions
will be denoted by $\Lip(\cs)$ and the set of bounded $\dcs$-Lipschitz
functions by $\Lip_b(\cs).$ For $F\in\Lip(\cs)$ the global Lipschitz
constant $\Lip(F)$ is the smallest $C$ such that \eqref{eq:lip} holds
and we define the local Lipschitz constant by
\begin{align}\label{def:locLip}
|DF|(\gamma):=\limsup_{\dcs(\eta,\gamma)\to 0} \frac{|F(\gamma)-F(\eta)|}{\dcs(\gamma,\eta)}\;.
\end{align}

The compatibility of the differential and metric structure of the
configuration space is given by the following Rademacher theorem which
we quote from \cite[Thm. 1.3, Thm. 1.5]{RS99}.

\begin{theorem}\label{thm:Rademacher}
 \begin{enumerate}
 \item Suppose $F\in L^2(\pi)\cap\mathsf{Lip}(\cs)$. Then
   $F\in\cF$. Moreover, there exists a measurable section $\nabla^\cs
   F$ of $T\cs$ such that
\begin{enumerate}
\item[a)] $\Gamma^\cs(F)(\gamma)=\|\nabla^\cs F(\gamma)\|_\gamma \leq \Lip(F)$ for $\pi$-a.e. $\gamma$.
\item[b)] If $V\in\mathcal V_0(M)$ generates the flow $(\psi_t)_{t\in\R}$, then for $\pi$-a.e. $\gamma$ and all $s\in\R$:
  \begin{align*}
    \frac{F(\psi_t^*\gamma)-F(\gamma)}{t}\to \langle \nabla^\cs
    F(\gamma),V\rangle_\gamma,\quad \text{ as } t\to 0 \text{ in } L^2(\pi\circ(\psi_s^*)^{-1})\;.
  \end{align*}
\end{enumerate}
\item If $F\in\mathcal F$ satisfies $\Gamma^\cs(F)\leq C^2, \pi$-a.e. and
  if $F$ has a $\dcs$-continuous $\pi$-version, then there exists a
  $\pi$-measurable $\pi$-version $\tilde F$ which is $\dcs$-Lipschitz
  with $\mbox{Lip}(\tilde F)\leq C.$
\item $\dcs$ coincides with the intrinsic metric of the Dirichlet form $(\cE,\cF)$, i.e.\
     for all $\gamma,\eta\in\cs$:
     \begin{align*}
       \dcs(\gamma,\eta)=\sup\left\{F(\gamma)-F(\eta) : F\in\cF\cap C(\cs)\;,\ \Gamma^\cs(F)\leq1\ \pi\text{-a.e.}\right\}\;.
     \end{align*}
 \end{enumerate}
\end{theorem}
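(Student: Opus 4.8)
The plan is to prove the three assertions by \emph{localizing to bounded regions of $M$}, where the Poisson measure $\pi$, after conditioning on the number of points, restricts to a symmetric law on a finite power $\Lambda^k$, and then to invoke the classical Rademacher theorem on Riemannian manifolds together with standard Dirichlet form theory; the quasi-invariance of $\pi$ under compactly supported diffeomorphism flows is used throughout. First I would record the basic estimate that for $V\in\mathcal V_0(M)$ with flow $(\psi_t)$ the curve $t\mapsto\psi_t^*\gamma$ is locally $\dcs$-Lipschitz with a bound governed by $\|V\|_\gamma$: matching the point $x_i$ of $\gamma$ to $\psi_t(x_i)$ of $\psi_t^*\gamma$ gives $\dcs^2(\psi_t^*\gamma,\psi_s^*\gamma)\le\sum_i d^2(\psi_t(x_i),\psi_s(x_i))$, a finite sum since only the finitely many $x_i\in\supp V$ move, of order $|t-s|^2\|V\|^2_\gamma$. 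Consequently, for $F\in\Lip(\cs)$ the real function $t\mapsto F(\psi_t^*\gamma)$ is Lipschitz, hence differentiable for a.e.\ $t$; combining a Fubini argument in $(t,\gamma)$, the cocycle identity $F(\psi^*_{t+h}\gamma)=F(\psi_h^*\psi_t^*\gamma)$ and the quasi-invariance of $\pi$, the derivative at $t=0$ exists and realizes the $L^2$-limit in (b) for $\pi$-a.e.\ $\gamma$. The pointwise bound $|F(\psi_t^*\gamma)-F(\gamma)|\le\Lip(F)\,\dcs(\psi_t^*\gamma,\gamma)$ with the coupling estimate then yields $|\nabla^\cs_V F(\gamma)|\le\Lip(F)\|V\|_\gamma$ a.e.; fixing a countable family $(V_k)\subset\mathcal V_0(M)$ whose values are dense in $T_\gamma\cs$ for $\pi$-a.e.\ $\gamma$, the functional $V\mapsto\nabla_V^\cs F(\gamma)$ is bounded on a dense subspace of the Hilbert space $T_\gamma\cs$, hence represented by a unique measurable section $\gamma\mapsto\nabla^\cs F(\gamma)$ with $\|\nabla^\cs F(\gamma)\|_\gamma\le\Lip(F)$.

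To finish part (1) one must show $F\in\cF$ and that this section is the one with $\cE(F)=\int\|\nabla^\cs F\|^2_\gamma\dd\pi$. Here I would localize: conditioning on $\gamma(\Lambda)=k$ for a bounded open $\Lambda$ turns $F$ into a symmetric Lipschitz function of $k$ variables on $\Lambda^k$, which can be mollified and symmetrized into smooth approximations with gradient bounded by $\Lip(F)$ and recombined (via functions of the coordinates $\gamma(\phi)$) into cylinder functions $F_n\to F$ with $\sup_n\cE(F_n)\le\Lip(F)^2$; lower semicontinuity of $\cE$ then gives $F\in\cF$ and identifies the relaxed gradient with the explicit section, so $\Gamma^\cs(F)=\|\nabla^\cs F\|^2_\gamma$. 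Part (2) is the converse localized statement: conditioning on $\gamma(\Lambda)=k$, a function $F\in\cF$ with $\Gamma^\cs(F)\le C^2$ becomes, for a fixed outside configuration, a Sobolev function on $\Lambda^k$ with gradient bounded by $C$, hence admits a $C$-Lipschitz representative there by the classical Rademacher theorem; patching these representatives along an exhaustion $\Lambda_n\uparrow M$, using the $\dcs$-continuous version to glue consistently and the fact that $\dcs$-Cauchy sequences of configurations concentrate, up to arbitrarily small mass, in a fixed bounded region, produces a global $\dcs$-Lipschitz version $\tilde F$ with $\Lip(\tilde F)\le C$.

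For part (3), write $\rho$ for the intrinsic metric of $(\cE,\cF)$. The inequality $\rho\le\dcs$ is immediate from part (2): every admissible $F\in\cF\cap C(\cs)$ with $\Gamma^\cs(F)\le1$ has a $1$-Lipschitz version, so $F(\gamma)-F(\eta)\le\dcs(\gamma,\eta)$. For $\dcs\le\rho$ one must exhibit admissible competitors nearly attaining $\dcs(\gamma,\eta)$ (assuming it is finite). The truncated distance $\min(\dcs(\cdot,\eta),R)$ is bounded and $1$-Lipschitz, hence in $\cF$ with $\Gamma^\cs\le1$ by part (1), but it need not be vaguely continuous; the remedy is to replace it by finite-range versions that only transport the restrictions of the configurations to a bounded region (the far-out mass of $\gamma$ and $\eta$, both Poissonian, being matched cheaply), which are vaguely continuous on the relevant sublevel sets, satisfy $\Gamma^\cs\le1$, and increase to $\dcs(\cdot,\eta)$ as the region exhausts $M$; evaluating at $\gamma$ and passing to the limit gives $\rho(\gamma,\eta)\ge\dcs(\gamma,\eta)$.

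The main obstacle throughout is the mismatch between the three topologies in play: the vague topology, in which $\cs$ is Polish and cylinder functions are continuous; the extended-metric topology of $\dcs$, which is strictly finer and may split $\cs$ into infinitely many pieces; and the $L^2(\pi)$ structure. This is precisely what forces the reduction to bounded regions and the delicate gluing, and it is the reason the distance function $\dcs(\cdot,\eta)$ is not itself an admissible test function in part (3). Controlling how $\dcs$-neighbourhoods interact with vague neighbourhoods and with the independence structure of $\pi$ — so that local (finite-dimensional) Rademacher estimates can be transferred globally — is the technical heart of the argument.
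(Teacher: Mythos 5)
The paper does not prove this theorem: it is stated as a direct quotation from R\"ockner--Schied, \cite[Thm.\ 1.3, Thm.\ 1.5]{RS99}, and the paper relies on that reference without reproducing the argument. So there is no in-paper proof to compare against; what can be assessed is whether your sketch is a viable route to the result.

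Your outline has the right ingredients --- quasi-invariance of $\pi$ under the flows $\psi_t^*$, the speed bound $\dcs(\psi_t^*\gamma,\gamma)/|t|\to\|V\|_\gamma$, and the Fubini/translation bootstrap to get a.e.-$t$ differentiability down to $t=0$ --- but two steps contain genuine gaps. In part (1), passing from a symmetric mollification on $\Lambda^k$ back to cylinder functions on $\cs$ with $\sup_n\cE(F_n)\le\Lip(F)^2$ is exactly the nontrivial density/closability statement that is the heart of \cite{RS99}: cylinder functions have the special form $g(\gamma(\phi_1),\dots,\gamma(\phi_n))$, and Stone--Weierstrass gives at best uniform approximation in sup norm on each $\cs^{(k)}_\Lambda$, with no control on the gradients $\Gamma^\cs(F_n)$, on the dependence across different $k$, or on the dependence of $F$ on the configuration outside $\Lambda$; this cannot be dispatched in one line. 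In part (2), the patching relies on the claim that ``$\dcs$-Cauchy sequences of configurations concentrate, up to arbitrarily small mass, in a fixed bounded region,'' which is false: on $M=\R$ take $\gamma=\sum_{n\ge1}\delta_n$ and $\eta_m=\sum_{n\ge1}\delta_{n+\eps_n\1_{\{n\ge m\}}}$ with $\sum_n\eps_n^2<\infty$; then $\dcs(\eta_m,\gamma)\to0$ while $\eta_m$ and $\gamma$ differ at arbitrarily distant locations and no bounded region captures the mismatch. Typical $\pi$-configurations are infinite and spread over all of $M$, and pairs at finite $\dcs$-distance can differ everywhere, so the exhaustion-and-glue argument as written does not go through; in \cite{RS99} the gluing is organised around the fiber structure of $(\cs,\dcs)$ and a measurable-selection argument. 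The part (3) competitors face the same difficulty: the vague continuity of the proposed finite-range truncations of $\dcs(\cdot,\eta)$ is not immediate, since $\dcs(\cdot,\eta)$ itself is only lower semicontinuous in the vague topology and truncations can jump when points cross the boundary of the cut-off region.
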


As a consequence we obtain the following pointwise comparison of the
Lipschitz constant and the Gamma operator.

\begin{lemma}\label{lem:lip-gamma}
  For all $F\in\mathsf{Lip}_b(\cs)$ and $\pi$-a.e. $\gamma$ we have
  \begin{align}\label{eq:lip-gamma}
    |DF|(\gamma)\geq\|\nabla^\cs
    F\|_\gamma=\sqrt{\Gamma^\cs(F)}(\gamma)\;.
  \end{align}
\end{lemma}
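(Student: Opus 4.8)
The plan is to bound the local Lipschitz constant $|DF|(\gamma)$ from below by the modulus of the gradient, using the directional-derivative characterization of $\nabla^\cs F$ along flows of compactly supported vector fields, which is available from Theorem~\ref{thm:Rademacher}. Fix $F\in\Lip_b(\cs)$. By part~(i) of Theorem~\ref{thm:Rademacher} we have $F\in\cF$ and there is a measurable section $\nabla^\cs F$ with the property that, for every $V\in\mathcal V_0(M)$ generating the flow $(\psi_t)$, the difference quotients $\big(F(\psi_t^*\gamma)-F(\gamma)\big)/t$ converge to $\langle\nabla^\cs F(\gamma),V\rangle_\gamma$ in $L^2(\pi\circ(\psi_s^*)^{-1})$ as $t\to0$. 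The first step is to upgrade this $L^2$-convergence to a $\pi$-a.e.\ pointwise statement along a suitable sequence $t_k\to0$: extract a subsequence so that the convergence holds $\pi$-a.e., and do this simultaneously for a countable family of vector fields $(V_j)$ dense in an appropriate sense (e.g.\ a countable set whose gradients $\{\nabla\phi_j\}$ generate the relevant directions at $\pi$-a.e.\ $\gamma$, using that cylinder functions are built from $\phi\in C_c^\infty(M)$).

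Next, observe that for each fixed $V\in\mathcal V_0(M)$ and each $t$, one has $\dcs(\psi_t^*\gamma,\gamma)\le \big(\int d^2(x,\psi_t(x))\dd\gamma(x)\big)^{1/2}$, by using the coupling of $\gamma$ and $\psi_t^*\gamma$ induced by the diffeomorphism $\psi_t$; and $d(x,\psi_t(x))\le |t|\,\norm{V}_\infty$ with the distance traveled being $|t|\,|V(x)|+o(t)$ uniformly on the (compact) support of $V$. Hence $\dcs(\psi_t^*\gamma,\gamma)\le |t|\,\norm{V}_{L^2(\gamma)}+o(t)$ as $t\to0$, where $\norm{V}_{L^2(\gamma)}^2=\int_M |V(x)|_x^2\dd\gamma(x)<\infty$ since $V$ has compact support and $\gamma$ is locally finite. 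Combining this with the definition \eqref{def:locLip} of $|DF|$ gives, for $\pi$-a.e.\ $\gamma$ and every $V$ in the countable family,
\begin{align*}
  |DF|(\gamma)~\ge~\limsup_{t\to0}\frac{|F(\psi_t^*\gamma)-F(\gamma)|}{\dcs(\psi_t^*\gamma,\gamma)}~\ge~\frac{|\langle\nabla^\cs F(\gamma),V\rangle_\gamma|}{\norm{V}_{L^2(\gamma)}}\;.
\end{align*}
Taking the supremum over the countable family and using its density to approximate (within $T_\gamma\cs=L^2(\bigoplus_{x\in\gamma}T_xM,\gamma)$) the direction $V=\nabla^\cs F(\gamma)$ itself yields $|DF|(\gamma)\ge \norm{\nabla^\cs F(\gamma)}_\gamma$, which by Theorem~\ref{thm:Rademacher}(i)a) equals $\sqrt{\Gamma^\cs(F)}(\gamma)$.

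The main obstacle is the interchange of the various limits and null sets: the convergence in Theorem~\ref{thm:Rademacher}(i)b) is only in $L^2$ with respect to the pushed-forward measure $\pi\circ(\psi_s^*)^{-1}$, so one must be careful that the exceptional $\pi$-null set can be chosen uniformly over a countable dense family of vector fields and that it is compatible with the a.e.\ defined section $\nabla^\cs F$; and one must check that the countable family of $V$'s genuinely approximates the section $\nabla^\cs F(\gamma)$ in $T_\gamma\cs$ for $\pi$-a.e.\ $\gamma$ (this is where the structure of the tangent space as an $L^2(\gamma)$-space of vector fields, together with density of $\mathcal V_0(M)$-restrictions, is used). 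A secondary technical point is the estimate $\dcs(\psi_t^*\gamma,\gamma)\le|t|\,\norm{V}_{L^2(\gamma)}+o(t)$, which requires uniform control of the remainder in $d(x,\psi_t(x))=|t||V(x)|+o(t)$ on $\supp V$ — routine since $\supp V$ is compact and $V$ is smooth. Once these measurability and approximation issues are handled, the inequality follows.
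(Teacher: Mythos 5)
Your argument is correct and establishes the lemma, but it takes a genuinely different route from the paper's. The paper invokes \cite[Prop.~5.4]{RS99}, which says that any $\eta$ at finite $\dcs$-distance from $\gamma$ can be approximated arbitrarily well by a flow translate $\psi_1^*\gamma$ generated by some $V\in\mathcal V_0(M)$ whose $\gamma$-norm realizes the distance $\dcs(\psi_1^*\gamma,\gamma)$; this upgrades the definition of $|DF|(\gamma)$ to an \emph{equality}
\begin{align*}
  |DF|(\gamma)~=~\limsup_{V\in\mathcal V_0(M),\,\norm{V}_\gamma\to0}\frac{\abs{F(\psi_1^*\gamma)-F(\gamma)}}{\norm{V}_\gamma}\;,
\end{align*}
after which the directional-derivative part of Theorem~\ref{thm:Rademacher}(i)b) and a supremum over $V$ finish the job. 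You instead bypass \cite[Prop.~5.4]{RS99} entirely: the pushforward coupling $(\mathrm{id},\psi_t)_*\gamma$ together with the ODE bound $d(x,\psi_t(x))\leq \abs{t}\abs{V(x)}_x+o(t)$ uniformly on the compact $\supp V$ gives the elementary one-sided estimate $\dcs(\psi_t^*\gamma,\gamma)\leq \abs{t}\norm{V}_\gamma+o(t)$ (valid since $\gamma(\supp V)<\infty$), which is enough to bound $|DF|(\gamma)$ from below along flow translates. This trades an equality for a self-contained inequality, which is all the lemma needs. Both approaches share the same measure-theoretic subtlety — the convergence in Theorem~\ref{thm:Rademacher}(i)b) is only in $L^2$ and the exceptional null set a priori depends on $V$, so one must extract a subsequence and reduce to a countable family of vector fields whose restrictions are dense in $T_\gamma\cs$ for $\pi$-a.e.\ $\gamma$ (which holds since $\pi$-a.e.\ configuration is simple and locally finite). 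The paper glosses over this; you flag it explicitly and sketch the reduction, which is arguably the more careful treatment. One small hygiene point: for the quotient to make sense you should discard those $t$ (or those $V_j$) with $\psi_t^*\gamma=\gamma$ — equivalently $V$ vanishing on $\supp\gamma$ — but then $\ip{\nabla^\cs F(\gamma),V}_\gamma=0$ anyway, so nothing is lost.
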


\begin{proof}
  By \cite[Prop. 5.4]{RS99} for every $\gamma,\eta\in\cs$ with
  $\dcs(\gamma,\eta)<\infty$ and every $\epsilon>0$ there is a
  $V\in\mathcal V_0(M)$ generating the flow $(\psi_t)_{t\in\R}$ such
  that $\dcs(\psi_1^*\gamma,\eta)<\epsilon$ and
  $\|V\|_{\psi^*_t\gamma}=\dcs(\psi_1^*\gamma,\gamma)$ for all
  $t\in[0,1].$ Hence, by $\dcs$-continuity of $F$ we have
  \begin{align*}
    |DF|(\gamma)=\limsup_{\dcs(\eta,\gamma)\to
      0}\frac{|F(\eta)-F(\gamma)|}{\dcs(\eta,\gamma)}=\limsup_{V\in\mathcal
      V_0(M), \|V\|_\gamma\to
      0}\frac{|F(\psi_1^*\gamma)-F(\gamma)|}{\norm{V}_\gamma}\;.
  \end{align*}
  By part (i) b) of Theorem \ref{thm:Rademacher}, we have for
  $\pi$-a.e.  $\gamma$ and all $V\in\mathcal V_0(M)$
  \begin{align*}
    |DF(\gamma)|\geq \lim_{t\to 0}
    \frac{F(\psi_t^*\gamma)-F(\gamma)}{t \|V\|_\gamma} =
    \frac1{\|V\|_\gamma}\langle \nabla^\cs F(\gamma),V\rangle_\gamma\;.
  \end{align*}
  Hence, taking the supremum over $V$ we get $|DF|(\gamma)\geq \|\nabla^\cs
  F\|_\gamma$ for $\pi$-a.e. $\gamma$.
\end{proof}

In \cite{AGS11a} Ambrosio, Gigli and Savar\'e develop a calculus on
(extended) metric measure spaces and study the ``heat flow'' in this
setting. A crucial result is the construction of a natural candidate
for a Dirichlet form starting only from a metric and a measure. Their work
is the foundation for studying Riemannian Ricci curvature bounds via
optimal transport on (non-extended) metric measure spaces in
\cite{AGS11b,AGS12}. Here we make the connection to this approach,
showing that the triple $(\cs,\dcs,\pi)$ fits into the framework of
\cite{AGS11a} and that the Dirichlet form $\cE$ coincides with its
metric counterpart constructed from $\dcs$.

First note that $(\cs,\dcs)$ equipped with the vague topology is a
Polish extended space in the sense of \cite[Def. 2.3]{AGS11a}: it is
complete, i.e.\ every $\dcs$-convergent sequence has a limit in $\cs$,
$\dcs(\gamma_n,\gamma)\to0$ implies that $\gamma_n\to\gamma$ vaguely
for all sequences $(\gamma_n)\subset \cs$ and $\gamma\in\cs$, and
$\dcs$ is lower semi continuous w.r.t.\ the vague topology.
 
The Cheeger energy $\Ch$ introduced in \cite{AGS11a} is given on the
configuration space as a functional $\Ch:L^2(\cs,\pi)\to[0,+\infty]$
defined via

\begin{align}\label{eq:Cheeger energy}
  \mathsf{Ch}(F):=\inf\left\{\liminf_{n\to\infty}
    \frac12\int_\cs|DF_n|^2\ d\pi\ :\ F_n\in\mathsf{Lip}_b(\cs), \
    F_n\to F \text{ in } L^2(\cs,\pi)\right\}\;.
\end{align}

\begin{proposition}\label{prop:E=2Ch}
  The Cheeger energy associated to $\dcs$ coincides with the Dirichlet
  form $\cE$, i.e.\ $\cE(F)=2\Ch(F)$ for all $F\in L^2(\cs,\pi)$.
\end{proposition}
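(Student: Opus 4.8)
The plan is to prove the two inequalities $\cE(F) \le 2\Ch(F)$ and $\cE(F) \ge 2\Ch(F)$ separately, using the Rademacher-type results of Theorem~\ref{thm:Rademacher} together with Lemma~\ref{lem:lip-gamma} as the bridge between the local Lipschitz constant $|DF|$ and the carr\'e du champ $\Gamma^\cs$. Throughout, it suffices to work with the densely defined quadratic forms and then pass to the $L^2$-closure, since both $\cE$ and $2\Ch$ are $L^2$-lower semicontinuous (the former as a Dirichlet form, the latter by construction as a relaxed functional).

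\emph{Step 1: $\cE \le 2\Ch$.} Take $F \in L^2(\cs,\pi)$ with $\Ch(F) < \infty$ (otherwise nothing to prove) and an optimal recovery sequence $F_n \in \Lip_b(\cs)$ with $F_n \to F$ in $L^2$ and $\liminf_n \frac12\int |DF_n|^2\,d\pi = \Ch(F)$. By part~(i) of Theorem~\ref{thm:Rademacher}, each $F_n$ lies in $\cF$ with $\Gamma^\cs(F_n)(\gamma) = \|\nabla^\cs F_n\|_\gamma^2 \le \Lip(F_n)^2$; more importantly, part~(i)a) gives the pointwise bound $\Gamma^\cs(F_n) \le |DF_n|^2$ $\pi$-a.e. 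Hence $\cE(F_n) = \int \Gamma^\cs(F_n)\,d\pi \le \int |DF_n|^2\,d\pi$, so along the recovery sequence $\liminf_n \frac12\cE(F_n) \le \Ch(F)$. Since $F_n \to F$ in $L^2$ and $\cE$ is $L^2$-lower semicontinuous, $\frac12\cE(F) \le \liminf_n \frac12\cE(F_n) \le \Ch(F)$, i.e. $\cE(F) \le 2\Ch(F)$; in particular $F \in \cF$.

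\emph{Step 2: $2\Ch \le \cE$.} Here I would first establish the inequality for a nice class of functions and then relax. The natural dense class is $\cyl$: for $F \in \cyl$ one checks directly that $F$ is $\dcs$-Lipschitz (this needs a small argument — a cylinder function $F(\gamma) = g_F(\gamma(\phi_1),\dots,\gamma(\phi_n))$ has $|F(\gamma) - F(\eta)| \le \|\nabla g_F\|_\infty \sum_i |\gamma(\phi_i) - \eta(\phi_i)|$, and each $|\gamma(\phi_i) - \eta(\phi_i)|$ is controlled by $\Lip(\phi_i)\,\dcs(\gamma,\eta)$ using a coupling realizing $\dcs$, provided $\dcs(\gamma,\eta) < \infty$; when $\dcs(\gamma,\eta) = \infty$ the bound is vacuous) and hence bounded $\dcs$-Lipschitz. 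Then one needs the reverse pointwise comparison $|DF|(\gamma) \le \sqrt{\Gamma^\cs(F)}(\gamma) = \|\nabla^\cs F(\gamma)\|_\gamma$ for cylinder functions — essentially because the only directions along which $F$ varies are the smooth compactly supported vector field directions, whose effect is captured exactly by $\nabla^\cs F$, and the asymptotic Lipschitz constant cannot exceed this. Combined with Lemma~\ref{lem:lip-gamma} this gives $|DF| = \sqrt{\Gamma^\cs(F)}$ $\pi$-a.e. for $F \in \cyl$, so $F$ itself is an admissible competitor in \eqref{eq:Cheeger energy} (after noting $\cyl \subset \Lip_b(\cs)$) and $2\Ch(F) \le \int |DF|^2\,d\pi = \cE(F)$. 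For general $F \in \cF$, approximate by cylinder functions $F_n \in \cyl$ with $F_n \to F$ in $\cF$ (possible since $\cyl$ is a form core, by definition of $\cF$ as the closure of $(\cE,\cyl)$); then $\cE(F_n) \to \cE(F)$ and $F_n \to F$ in $L^2$, so by $L^2$-lower semicontinuity of $\Ch$ we get $2\Ch(F) \le \liminf_n 2\Ch(F_n) \le \liminf_n \cE(F_n) = \cE(F)$.

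\emph{Main obstacle.} The delicate point is the reverse comparison $|DF| \le \|\nabla^\cs F\|_\gamma$ for cylinder functions, which is the only place where one must argue that no "hidden" directions of variation exist beyond the vector-field flows $\psi_t^*$. One way to handle this cleanly is via part~(iii) of Theorem~\ref{thm:Rademacher} (the identification of $\dcs$ with the intrinsic metric of $\cE$): this already encodes that $\dcs$-increments are controlled by $\cE$-data, and together with the explicit form of $\dcs$ one can bound the difference quotient $|F(\eta) - F(\gamma)|/\dcs(\eta,\gamma)$ for nearby $\eta$ by quantities converging to $\|\nabla^\cs F(\gamma)\|_\gamma$; alternatively one invokes \cite[Prop.~5.4]{RS99} as in the proof of Lemma~\ref{lem:lip-gamma} to reduce to flow directions and then uses smoothness of $g_F$ and $\phi_i$ to compute the limit exactly. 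The rest is routine lower-semicontinuity and density bookkeeping.
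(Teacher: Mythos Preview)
Your proof is correct and follows essentially the same two-step strategy as the paper: for $\cE\le 2\Ch$ use Lemma~\ref{lem:lip-gamma} on a recovery sequence together with lower semicontinuity of $\cE$, and for $2\Ch\le\cE$ use that cylinder functions are bounded $\dcs$-Lipschitz with $|DF|=\|\nabla^\cs F\|_\gamma$, then approximate by the form core $\cyl$ and invoke lower semicontinuity of $\Ch$. Two minor remarks: in Step~1 the pointwise bound $\Gamma^\cs(F_n)\le |DF_n|^2$ comes from Lemma~\ref{lem:lip-gamma}, not from Theorem~\ref{thm:Rademacher}(i)a) (which only gives the global Lipschitz constant); and in Step~2 the paper simply \emph{asserts} the equality $|DF_n|(\gamma)=\|\nabla^\cs F_n\|_\gamma$ for cylinder functions without further comment, whereas you correctly flag the inequality $|DF|\le\|\nabla^\cs F\|_\gamma$ as the one point requiring a short direct argument (and your sketch of that argument is sound).
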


\begin{proof}
  Let us first show that $\cE\leq 2\Ch$. By definition for $F\in
  L^2(\cs,\pi)$ with $\Ch(F)<\infty$ there is a sequence of bounded
  Lipschitz functions $(F_n)_{n\in\N}$ such that $F_n\to F$ in
  $L^2(\cs,\pi)$ and $\lim_n \Ch(F_n)=\Ch(F)$. By \eqref{eq:lip-gamma}
  of Lemma \ref{lem:lip-gamma} and lower semicontinuity of $\cE$ in
  $L^2(\cs,\pi)$ we obtain
  \begin{align*}
    2\Ch(F)=\lim_n\int |DF_n|^2\ d\pi \geq \liminf_n \int |\nabla^\cs
    F_n|^2\ d\pi \geq \cE(F)\;.
  \end{align*}
  To prove the converse inequality $\cE\geq 2\Ch$, note that by
  definition for $F\in L^2(\cs,\pi)$ with $\cE(F)<\infty$ there exists
  a sequence of cylinder functions $(F_n)_{n\in\N}$ such that $F_n\to
  F$ in $L^2(\cs,\pi)$ and $\lim_n\cE(F_n)=\cE(F)$. Note that any
  cylinder function $F_n$ is $\dcs$-Lipschitz with $\abs{D
    F_n}(\gamma) = \norm{\nabla^\cs F_n}_\gamma$. Thus we obtain from
  the definition of $\Ch$ and its lower semicontinuity in
  $L^2(\cs,\pi)$ (see \cite[Thm. 4.5]{AGS11a}):
  \begin{align*}
    2\Ch(F)\leq \liminf_n 2\Ch(F_n)\leq \liminf_n \int |DF_n|^2\ d\pi = \liminf_n\cE(F_n)=\cE(F)\;.
  \end{align*}
\end{proof}

Having identified the Dirichlet form $\cE$ with the Cheeger $\Ch$
energy build from the distance $d_\cs$ in particular yields that the
semigroup $T^\cs_t$ coincides with the gradient flow of $\Ch$ in
$L^2(\cs,\pi)$. This will be used in Section \ref{sec:RCD}.

\subsection{The heat semigroup}
\label{sec:heat-sg}

In this section we establish an explicit representation of the Markov
semigroup $T^\cs_t$ associated to the Dirichlet form $\cE$. We
identify it with the semigroup of the independent particle process
obtained by starting in each point of a configuration independent
Brownian motions. This identification is non-trivial when
$m(M)=\infty$. While the first lives by definition on the
configuration space, the latter a priory lives in the larger space of
counting measures that are not necessarily locally finite. We will
show that whenever $\Ric_M\geq K$ for some $K\in \R$ the independent
particle process can be started in a subset of $\cs$ of full $\pi$
measure and stays there for all time. 

Consider the infinite product $M^\N$ equipped with the cylinder
$\sigma$-algebra $\cC(M^\N)$. We put $\cA\in \cC(M^\N)$ to be the set
of all sequences $(x_n)_{n=1}^\infty\in M^\N$ which have no
accumulation points. Recall the labeling map $l:\cA\to \cs$ given by
\begin{align*}
  l:\ (x_n)_{n=1}^\infty ~\mapsto \sum\limits_{n=1}^\infty\delta_{x_n}\;.
\end{align*}
Note that $\pi(l(\cA))=1$. Let $p^M_t(x,y)$ denote the heat kernel on
the manifold $M$. Moreover, we denote by
\begin{align*}
  p^M_t(x,A)~=~\int\limits_A p^M_t(x,y)\dd m(y)
\end{align*}
the semigroup of transition kernels. This gives rise to a family of
probability measure on $\big(M^\N,\cC(M^\N)\big)$ by considering the
product measures
\begin{align*}
  p_t^\N\big((x_n)_n,\cdot\big) := \bigotimes\limits_{n=1}^\infty
  p^M_t(x_n,\cdot)\;.
\end{align*}
Given $\gamma\in\cs$ we can define a probability measure on $\cs$ via
\begin{align}\label{eq:naivsgcs}
  p^\cs_t(\gamma,G)~:=~ p_t^\N\big((x_n)_n,l^{-1}(G)\big)\qquad G\in \cB(\cs)\;,
\end{align}
where $\gamma=l\big((x_n)\big)$, provided that
$p_t^\N\big((x_n)_n,\cA\big)=1$ for all $t\geq 0$. Our goal will be to
show that for a large class of $\gamma$ the latter indeed holds.

We fix a point $x_0\in M$ and denote by $B_r=B(x_0,r)$ the closed ball
around $x_0$ with radius $r$. Define for each $\a\geq1$:
\begin{align*}
  \Theta_\a := \left\{\gamma\in\cs\ :\ \exists C>0:\ \forall r\in\N: \gamma\big(B_r\big)\leq C \e^{\a r}\right\}\;.
\end{align*}
Since $\Theta_\a\subset\Theta_\beta$ for $\a\leq\beta$ it makes sense
to define also
\begin{align}\label{good config}
  \Theta := \bigcup\limits_{\a\geq1}\Theta_\a\;.
\end{align}
We call $\Theta$ the set of \emph{good configurations}. Note that
the Poisson measure is concentrated on configurations satisfying
$\gamma(B_r)\sim\vol(B_r)$ as $r\to\infty$. Since we assume $\Ric\geq
K$, the Bishop--Gromov volume comparison theorem (see Lemma
\ref{lem:HK-tail} below) implies that $\vol(B_r)\leq C\e^{\a r}$ for
suitable constants $C,\a$. Thus, we conclude that $\pi(\Theta_\a)=1$
for $\a$ sufficiently large and in particular $\pi(\Theta)=1$. The
following is a slight generalization of \cite[Thm. 2.2, 4.1]{KLR08}.

\begin{theorem}\label{thm:sg-welldef}
  Assume that $\Ric_M\geq K$ for some $K\in\R$. Then for each
  $\gamma\in\Theta$ and all $t>0$ the measure $p^\cs_t(\gamma,\cdot)$
  defined in \eqref{eq:naivsgcs} is a probability measure on
  $\cs$. Moreover, $(p^\cs_t)_{t\geq0}$ is a Markov semigroup of kernels
  on $\big(\Theta,\cB(\Theta)\big)$. For each $F\in L^2(\cs,\pi)$ the
  function
  \begin{align*}
    \Theta\ni\gamma\mapsto \tilde T_t^\cs F(\gamma)=\int_\Theta F(\xi)p^\cs_t(\gamma,\dd\xi)
  \end{align*}
  is a $\pi$-version of the function $T^\cs_tF\in L^2(\cs,\pi)$.
\end{theorem}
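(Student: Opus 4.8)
The plan is to prove the three assertions in order: (a) that $p^\cs_t(\gamma,\cdot)$ is supported on $\cs$ (equivalently $p^\N_t((x_n),\cA)=1$) for $\gamma\in\Theta$; (b) that the resulting kernels form a Markov semigroup on $\Theta$; (c) that $\tilde T^\cs_t F$ is a $\pi$-version of the Dirichlet-form semigroup $T^\cs_t F$.

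For (a), fix $\gamma=l((x_n))\in\Theta_\a$, so $\gamma(B_r)\le C\e^{\a r}$ for all $r\in\N$. Under the product measure $p^\N_t((x_n),\cdot)$ the points evolve as independent Brownian motions $y_n$ started at $x_n$; I must show that almost surely the family $(y_n)$ has no accumulation point. The key tool is the Gaussian-type heat kernel upper bound valid under $\Ric\ge K$ (this is \cite{lem:HK-tail} referenced in the excerpt): $p^M_t(x,y)\le$ (something) so that $\P[\,d(y_n,x_0)\ge R\,]\le$ a Gaussian tail in $R$ uniformly over starting points in a ball. Concretely, I would estimate, for each large $R\in\N$,
\begin{align*}
  \P\big[\exists n:\ d(x_n,x_0)\le R/2,\ d(y_n,x_0)\ge R\big]
  ~\le~ \sum_{n:\,d(x_n,x_0)\le R/2} \P\big[d(y_n,x_n)\ge R/2\big]\;,
\end{align*}
and bound the number of such $n$ by $\gamma(B_{R/2})\le C\e^{\a R/2}$, while each probability is bounded by $C'\exp(-c R^2/t)$ via the heat-kernel tail (integrating the Gaussian bound over the complement of a ball, using volume comparison again). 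Since $\e^{\a R/2}\e^{-cR^2/t}$ is summable in $R$, Borel--Cantelli gives that a.s. only finitely many particles starting in $B_{R/2}$ ever leave $B_R$; combined with the (trivial) fact that the particles starting outside $B_{R/2}$ are irrelevant for accumulation inside $B_{R/4}$ for $R$ large, one concludes that a.s. every ball contains only finitely many $y_n$, i.e. $(y_n)\in\cA$. Hence $p^\cs_t(\gamma,\cdot)$ is a probability measure on $\cs$. One must also check $\gamma\mapsto p^\cs_t(\gamma,G)$ is $\cB(\Theta)$-measurable, which is routine from the product structure and measurability of $(x,t)\mapsto p^M_t(x,\cdot)$.

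For (b), I need that the image configuration is again good, i.e. $p^\cs_t(\gamma,\Theta)=1$, so that the kernels compose. This follows by refining the estimate in (a): the same Borel--Cantelli argument shows that a.s. $\sum_n \1_{B_r}(y_n)\le C''\e^{\b r}$ for some (possibly larger) exponent $\b$ and random constant $C''$; hence the evolved configuration lies in $\Theta_\b\subset\Theta$ a.s. The semigroup (Chapman--Kolmogorov) property $p^\cs_{t+s}=p^\cs_t p^\cs_s$ on $\Theta$ then follows directly from the corresponding identity for the product kernels $p^\N_t$, which in turn is the Chapman--Kolmogorov identity for $p^M_t$ applied coordinatewise, together with the compatibility of $l$ with the two descriptions.

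For (c), the strategy is to show $\tilde T^\cs_t$ and $T^\cs_t$ agree on the core $\cyl$ of smooth cylinder functions and then pass to the $L^2$-closure. For $F(\gamma)=g(\gamma(\phi_1),\dots,\gamma(\phi_k))$ with $\phi_i\in C_c^\infty(M)$, one has a clean product formula: with $y_n$ independent Brownian motions from $x_n$, $\gamma(\phi_i)$ evolves to $\sum_n \phi_i(y_n)$, and because all but finitely many $\phi_i(y_n)$ vanish (the $\phi_i$ are compactly supported and — by (a) — only finitely many particles reach the support), the integral defining $\tilde T^\cs_t F(\gamma)$ is an honest finite-dimensional Gaussian expectation, showing $\tilde T^\cs_t F\in L^\infty$ and continuity in $t$. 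One then verifies $\ddt \tilde T^\cs_t F = \Delta^\cs \tilde T^\cs_t F$ at $t=0$ on cylinder functions — using that $\Delta^\cs F = \sum_i \partial_i g\cdot \gamma(\Delta^M\phi_i) + \sum_{i,j}\partial_{ij}g\cdot\gamma(\langle\nabla\phi_i,\nabla\phi_j\rangle)$, which is exactly the generator one reads off from the finite-dimensional expectation. Since both semigroups are strongly continuous, contractive on $L^2(\cs,\pi)$, and have generators agreeing on the $L^2$-dense, invariant core $\cyl$, they coincide; a density/contraction argument extends the identification of $\tilde T^\cs_t F$ as the $\pi$-version of $T^\cs_t F$ from cylinder functions to all $F\in L^2(\cs,\pi)$ (using that $p^\cs_t$ is a Markov kernel, hence an $L^\infty$- and $L^1$-contraction, and interpolation).

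\textbf{Main obstacle.} The crux is step (a)–(b): controlling, uniformly over the infinitely many starting points of a good configuration, the probability that some particle travels an anomalously large distance. This is exactly where the Ricci lower bound enters, through the Li--Yau / Bishop--Gromov-type heat-kernel tail estimate (Lemma \ref{lem:HK-tail}): one needs the Gaussian factor $\e^{-cR^2/t}$ to beat the exponential volume growth $\e^{\a R}$ of a good configuration. Everything else — Chapman--Kolmogorov, the cylinder-function computation, the $L^2$ closure argument — is bookkeeping by comparison.
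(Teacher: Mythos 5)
Your Borel--Cantelli estimate is set up in the wrong direction. You bound the events $E_R=\{\exists n:\ x_n\in B_{R/2},\ y_n\notin B_R\}$, which concern particles that start near $x_0$ and run off to a large distance; summing over $R$ gives, a.s., a threshold $R_0$ such that for all $R\ge R_0$ every particle starting in $B_{R/2}$ stays inside $B_R$. This is an \emph{upper} bound (roughly $|y_n|\le 2|x_n|$ once $|x_n|\ge R_0/2$), and an upper bound on where the particles go cannot rule out accumulation --- it is perfectly consistent with infinitely many far-away particles ending up in a fixed ball. That second scenario is what has to be excluded, and you dismiss it as the ``trivial fact that the particles starting outside $B_{R/2}$ are irrelevant for accumulation inside $B_{R/4}$''. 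It is not trivial: there are infinitely many such particles, and for each of them the displacement $d(x_n,y_n)$ that must be excluded grows like $|x_n|$, so no single fixed threshold $R/4$ can be used in a Borel--Cantelli sum over $R$. Controlling this entry from infinity is the actual content of the theorem.

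The paper's argument uses a per-particle threshold rather than a fixed one. Sorting so that $|x_n|$ is non-decreasing, the growth condition $\gamma(B_r)\le C\e^{\alpha r}$ forces $|x_n|>r_n:=\lfloor\alpha^{-1}\log(n/C)\rfloor$, and the heat-kernel tail bound $p^M_t\big(x,\complement B(x,r)\big)\le C_2\e^{-C_1 r^2}$ from Lemma~\ref{lem:HK-tail} then gives
\begin{align*}
\sum_{n}p^M_t\big(x_n,\complement B(x_n,|x_n|/2)\big)~\le~\sum_n C_2\,\e^{-C_1 r_n^2}~\le~\sum_n C_2\,\e^{-C_1'(\log n)^2}~<~\infty\;.
\end{align*}
Borel--Cantelli applied to this single sum over $n$ yields $y_n\in B(x_n,|x_n|/2)$ for all but finitely many $n$, hence $|y_n|\ge|x_n|/2\to\infty$; this kills accumulation and at the same time shows $\gamma'(B_r)\le\gamma(B_{2r})+k\le C'\e^{2\alpha r}$, i.e.\ the evolved configuration lies in $\Theta_{2\alpha}\subset\Theta$, which is precisely what your step (b) asserts without proof. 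The rest of your sketch --- Chapman--Kolmogorov from the product structure and identifying $\tilde T^\cs_t$ with $T^\cs_t$ via the generator on the cylinder-function core --- matches the paper's outline, the last step being deferred to \cite[Thm. 2.1]{KLR08}.
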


\begin{proof}
  Let us write $\abs{x}:=d(x,x_0)$, where $x_0$ is the point chosen in
  the definition of $\Theta$.  We will first prove that for any
  $\gamma\in\Theta$ and $t\in(0,\eps)$:
  \begin{align}\label{eq:12ball}
    \sum\limits_{x\in\gamma} p^M_t\big(x,\complement B(x,\abs{x}/2)\big) ~<~ \infty\;.
  \end{align}
  To this end let $\gamma\in\Theta$ and let $(x_n)_n$ be a labeling of
  $\gamma$. We can assume that $\abs{x_n}\leq\abs{x_{n+1}}$ for all
  $n$. There exists $C,\a$ such that $\gamma(B_r)\leq C \e^{\a r}$ for
  all $r\in\N$. For $n\in\N$ let us set:
  \begin{align*}
    r_n~:=~\left\lfloor\frac{1}{\a}\log\Big(\frac{n}{C}\Big)\right\rfloor\;.
  \end{align*}
  This implies that $\gamma(B_{r_n})<n$ and hence we have $x_n\notin
  B_{r_n}$ and $\abs{x_n}>r_n$. Using Lemma \ref{lem:HK-tail} below we
  obtain that for constants $C_1,C_2$ (possibly changing from line to
  line):
  \begin{align*}
    &\sum\limits_{x\in\gamma} p^M_t\big(x,\complement B(x,\abs{x}/2)\big)
    ~=~
    \sum\limits_{n=1}^\infty p^M_t\big(x_n,\complement B(x_n,\abs{x_n}/2)\big)
    ~\leq~
    \sum\limits_{n=1}^\infty p^M_t\big(x_n,\complement B(x_n,r_n/2)\big)\\
    ~&\leq~
    \sum\limits_{n=1}^\infty C_2\exp({-C_1r_n^2})
    ~\leq~
    \sum\limits_{n=1}^\infty C_2 \exp\Big(- C_1\log(n)^2 \Big)
    ~<~\infty\;,
  \end{align*}
  which proves \eqref{eq:12ball}.

  Now, we want to prove that for any $(x_n)_n\in l^{-1}(\Theta)$ we
  have
  \begin{align}\label{eq:mass1}
    p^\N_t\Big((x_n)_n,l^{-1}(\Theta)\Big)~=~1\;.
  \end{align}
  So fix $(x_n)_n\in l^{-1}(\Theta)$ and set
  \begin{align*}
    \cA_n~&:=~\left\{(y_k)_k\in M^\N\ :\ y_n\in B(x_n,\abs{x_n}/2) \right\}\;,\\
    \cA'~&:=~\liminf\limits_n\cA_n\;.
  \end{align*}
  From \eqref{eq:12ball} and the Borel--Cantelli lemma we infer that
  for any $t\in(0,\eps)$:
  \begin{align*}
    p_t^\N\Big((x_n)_n, \cA'\Big)~=~1\;.
  \end{align*}
  By definition of $\Theta$ we have $\abs{x_n}\to\infty$ as
  $n\to\infty$ and so no sequence in $\cA'$ has accumulation points
  which means $\cA'\subset\cA$. To prove \eqref{eq:mass1} it is
  sufficient to show that $\cA'\subset l^{-1}(\Theta)$. So fix
  $(y_n)_n\in\cA'$ and let $k$ be the number of those $n$ such that
  $y_n\notin B(x_n,\abs{x_n}/2)$. Putting $\gamma=l\big((x_n)_n\big)$
  and $\gamma'=l\big((y_n)_n\big)$ and using \eqref{eq:volgrowth} we
  can estimate:
  \begin{align*}
    \gamma'(B_r)~&\leq~\gamma(B_{2r}) + k\\
                &\leq~C \e^{2 \a r} + k\\
                &\leq~C' \e^{2\a r}\;, 
  \end{align*}
  for a suitable $C'>0$ and all $r\in\N$. Hence we have
  $\gamma'\in\Theta_{2\a}\subset\Theta$ and this proves
  \eqref{eq:mass1}. Thus \eqref{eq:naivsgcs} defines a probability
  measure on $\cs$ concentrated on $\Theta$. It then follows easily
  from the semigroup property of $p^\N_t$ that $p^\cs_t$ can be defined
  for all $t>0$ and is a Markov semigroup of kernels on $\Theta$. The
  last statement of the theorem is proven as in
  \cite[Thm. 2.1]{KLR08}.
\end{proof}

\begin{lemma}\label{lem:HK-tail}
  Assume that $\Ric_M\geq -K$ for some $K\in[0,\infty)$. Then there is
  a constant $c$ such that
  \begin{align}\label{eq:volgrowth}
   \vol \big(B(x,r)\big)~\leq~\vol\big(B(x,1)\big)\cdot \e^{c r}\qquad \forall x\in M,\; r\geq1\;. 
  \end{align}
  Moreover, for any $T>0$ there are constants $c_1,c_2$ such that:
  \begin{align}\label{eq:HK-tail}
    \sup\limits_{t\in(0,T]}\sup\limits_{x\in M} p^M_t\big(x,\complement B(x,r)\big) ~\leq~ c_2 \e^{-c_2 r^2}\qquad \forall r>0\;.
  \end{align}
\end{lemma}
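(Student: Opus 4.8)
The two assertions are standard consequences of the lower Ricci bound $\Ric_M \geq -K$, but since both are needed in the explicit semigroup representation we record clean proofs. The plan is as follows. For the volume growth \eqref{eq:volgrowth}, I would invoke the Bishop--Gromov comparison theorem: under $\Ric_M \geq -K$ the function $r \mapsto \vol(B(x,r))/V_{-K}(r)$ is non-increasing, where $V_{-K}(r)$ is the volume of a ball of radius $r$ in the simply connected space form of constant curvature $-K/(n-1)$ (with $n = \dim M$). In that space form $V_{-K}(r)$ grows at most like $C\e^{(n-1)\sqrt{K/(n-1)}\,r}$, so comparing the ratios at radius $r$ and radius $1$ gives $\vol(B(x,r)) \leq \vol(B(x,1)) \cdot V_{-K}(r)/V_{-K}(1) \leq \vol(B(x,1))\cdot \e^{cr}$ for $r \geq 1$ and a dimensional constant $c$ depending only on $n$ and $K$. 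The uniformity in $x$ is automatic since the comparison constant depends only on the curvature bound.

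For the Gaussian tail estimate \eqref{eq:HK-tail}, I would use the standard upper bounds on the heat kernel on a manifold with Ricci bounded below. By the Li--Yau / Davies-type estimates (see e.g.\ Davies, \emph{Heat Kernels and Spectral Theory}, or Grigor'yan), under $\Ric_M \geq -K$ there are constants $C, c'$ (depending on $n, K, T$) such that for all $t \in (0,T]$ and all $x,y$,
\begin{align*}
  p^M_t(x,y) ~\leq~ \frac{C}{\vol(B(x,\sqrt t))}\exp\Big(-\frac{d(x,y)^2}{c' t}\Big)\;.
\end{align*}
To turn this into a tail bound one integrates over $y \in \complement B(x,r)$, splits the complement into annuli $B(x,2^{k+1}r)\setminus B(x,2^k r)$, and on each annulus bounds the volume using \eqref{eq:volgrowth} (with $\vol(B(x,\sqrt t))$ bounded below using that $t$ is bounded and, again, volume comparison, which gives a uniform lower bound $\vol(B(x,\sqrt t)) \geq c_0 t^{n/2}$ on a manifold with Ricci lower bound once we also use completeness and a local non-collapsing input — in fact for our purposes it suffices to keep the factor $1/\vol(B(x,\sqrt t))$ and note that the exponential Gaussian decay dominates). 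Summing the geometric-type series $\sum_k \e^{c\,2^{k+1}r}\e^{-c' 4^k r^2/t}$, the quadratic term in the exponent wins for $r$ large and one readily extracts a bound of the form $c_1 \e^{-c_2 r^2}$ uniform over $t \in (0,T]$ and $x \in M$, after possibly enlarging $c_1$ to absorb the small-$r$ regime where the estimate is trivial since probabilities are bounded by $1$.

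The main obstacle is a matter of bookkeeping rather than of substance: one must make sure the constants in the heat-kernel Gaussian upper bound are \emph{uniform in $x$}, which is exactly what the hypothesis $\Ric_M \geq -K$ (together with completeness) buys us via the relative volume comparison, and one must handle the non-collapsing denominator $\vol(B(x,\sqrt t))$ carefully — here the cleanest route is to only claim the weaker statement actually needed, namely the uniform tail bound \eqref{eq:HK-tail}, for which the precise form of the denominator is irrelevant because the Gaussian factor $\exp(-d(x,y)^2/c't)$ together with the at-most-exponential volume growth \eqref{eq:volgrowth} already forces super-exponential (indeed Gaussian) decay in $r$, uniformly. Alternatively, one can cite the estimate directly from \cite{KLR08}, where \eqref{eq:HK-tail} is established in essentially this form; our only addition is to track that the same constants work under a possibly negative Ricci bound, which follows verbatim from the comparison geometry.
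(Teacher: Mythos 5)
Your treatment of the volume growth estimate \eqref{eq:volgrowth} via Bishop--Gromov matches the paper's, which cites \cite[Lem.\ 5.3.bis]{Gr:metricstructure}. For the Gaussian tail \eqref{eq:HK-tail}, however, you take a genuinely different route from the paper. The paper quotes a ready-made probabilistic bound from Stroock (relation (8.65) in \cite{Str00}) on $\P\big[\sup_{0\leq s\leq t}d(B^x_s,x)\geq r\big]$, with explicit dependence on $K$, $\dim M$, $t$ and $r$, and then simply notes that $p^M_t\big(x,\complement B(x,r)\big)=\P[d(B^x_t,x)\geq r]$ is dominated by that supremum probability. This is essentially a one-line reduction. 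Your route --- the Li--Yau/Davies Gaussian upper bound $p^M_t(x,y)\lesssim \vol(B(x,\sqrt t))^{-1}\exp(-d^2(x,y)/c't)$, integrated over $\complement B(x,r)$ in dyadic annuli with \eqref{eq:volgrowth} --- also works, but is longer and demands more care with the $\vol(B(x,\sqrt t))^{-1}$ prefactor.

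One caveat worth flagging: the parenthetical claim that a Ricci lower bound alone yields a uniform lower bound $\vol(B(x,\sqrt t))\geq c_0t^{n/2}$ is false without a non-collapsing hypothesis (a thin flat cylinder $S^1_\eps\times\R$ has $\Ric=0$ but arbitrarily small unit balls). Your fallback --- keeping the factor $\vol(B(x,\sqrt t))^{-1}$ and letting the Gaussian decay dominate --- is the right instinct, but the precise mechanism is that Bishop--Gromov controls the \emph{ratio} $\vol(B(x,2^{k+1}r))/\vol(B(x,\sqrt t))$ uniformly in $x$ by a quantity of order $t^{-n/2}\e^{c\,2^{k+1}r}$ (the factors $\vol(B(x,1))$ cancel), and the polynomial singularity $t^{-n/2}$ is absorbed by $\exp(-4^kr^2/c't)$ uniformly over $t\in(0,T]$. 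With that substitution the dyadic sum does produce $c_1\e^{-c_2r^2}$ as required, so your argument is sound, just requiring more bookkeeping than the paper's citation.
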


\begin{proof}
  The estimate \eqref{eq:volgrowth} follows from the Bishop--Gromov
  volume comparison theorem \cite[Lem.\ 5.3.bis]{Gr:metricstructure}.
 
  The second estimate \eqref{eq:HK-tail} is a consequence of the
  following result (see relation (8.65) in \cite{Str00}): Fix $x\in M$
  and let $(B^x_t)_{t\geq0}$ be a Brownian motion started from $x$. Then
  for any $\lambda\in(0,1)$ and $r>0$ we have:
  \begin{align}\label{eq:Stroock}
    \P\Big[\sup\limits_{0\leq s\leq t} d(B^x_s,x)\geq r\Big]
     ~\leq~
   \frac{2}{\sqrt{1-\lambda}}\exp\left( -\frac{\lambda r^2}{2t} + \frac{\lambda\big(2d+Kd^2t\big)}{1-\lambda}\right)\;,    
  \end{align}
  where $d=\dim M$. This implies \eqref{eq:HK-tail} immediately, since
  \begin{align*}
    p^M_t\Big(x,\complement B(x,r) \Big)~=~\P\Big[d(B^x_t,x)\geq r\Big]~\leq~\P\Big[\sup\limits_{0\leq s\leq t} d(B^x_s,x)\geq r\Big]\;.
  \end{align*}
\end{proof}

\subsection{Additional tools}
\begin{lemma}\label{lem:existence of matching}
  For every $\gamma, \omega \in \cs$ with $\dcs(\gamma,\omega)<\infty$
  there exists an optimal coupling $q$ which is a matching,
  i.e.\ $\dcs^2(\gamma,\omega)=\int d^2(x,y)\dd q(x,y)$ and for all
  $\{x,y\}\in M\times M$ we have $q(\{x,y\})\in \{0,1 \}$.
\end{lemma}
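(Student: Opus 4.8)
The plan is to reduce the statement to a known discrete optimal transport / assignment fact. Since $\dcs(\gamma,\omega)<\infty$, the two configurations $\gamma=\sum_i\delta_{x_i}$ and $\omega=\sum_j\delta_{y_j}$ have the same (possibly infinite) total mass $N\in\N_0\cup\{\infty\}$, so the couplings in $\Cpl(\gamma,\omega)$ are exactly the doubly-stochastic-type measures on $M\times M$ whose marginals are these integer-valued measures. A coupling $q$ is a matching precisely when, after identifying $\gamma$ and $\omega$ with their labelings, the corresponding (bi-infinite) ``transport matrix'' $(q_{ij})$ is a permutation matrix, i.e.\ $q_{ij}\in\{0,1\}$ with exactly one $1$ in each row and column. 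So the claim is that the infimum in the definition of $\dcs^2$ is attained, and attained at an extreme point of the (bi)stochastic polytope.

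The key steps, in order, are as follows. First I would establish \emph{existence} of \emph{some} optimal coupling $q_0$: the cost $c(x,y)=d^2(x,y)$ is lower semicontinuous and $\dcs(\gamma,\omega)<\infty$ gives a competitor of finite cost, so standard weak-compactness of $\Cpl(\gamma,\omega)$ in the vague topology (Prokhorov, using tightness of the fixed marginals) plus lower semicontinuity of the cost functional yields a minimizer; this is classical Kantorovich existence and works verbatim even though the marginals are infinite measures, because we only need tightness of each marginal, which holds since $\gamma,\omega$ are locally finite and the finite-cost constraint confines transported mass. Second, I would \emph{cyclically monotone rearrange} $q_0$ into a matching without increasing the cost. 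Concretely: writing $q_0$ in the labeled coordinates as a substochastic/bistochastic array $(q_{ij})$, I want to produce a permutation $\sigma$ with $\sum_i d^2(x_i,y_{\sigma(i)})\le \sum_{ij} q_{ij} d^2(x_i,y_j)$. In the finite case ($N<\infty$) this is the Birkhoff–von Neumann theorem: the optimal value of the linear assignment problem over doubly stochastic matrices is attained at a vertex, i.e.\ a permutation matrix. In the infinite case one invokes the analogous statement for the infinite assignment problem — alternatively, exhaust $M$ by balls $B_r$, apply the finite result to the induced finite sub-configurations (using that finite cost forces only finitely many ``long'' edges, as in the Borel–Cantelli estimate of Theorem~\ref{thm:sg-welldef}), and pass to the limit to extract a global matching whose cost does not exceed $\dcs^2(\gamma,\omega)$. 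Since a matching is in particular a coupling, its cost is also $\ge\dcs^2(\gamma,\omega)$, so it is optimal, giving the desired $q$.

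The main obstacle I expect is the passage from the finite to the infinite assignment problem: one must argue that a diagonal/compactness limit of finite optimal matchings on $B_r$ is again a \emph{bijective} matching on all of $M$ (no mass escaping to infinity, no point left unmatched, no point matched twice in the limit). The finiteness of $\dcs^2(\gamma,\omega)$ is exactly what prevents this: it forces, for every $\eps>0$, all but finitely many matched pairs $(x_i,y_{\sigma(i)})$ to have $d(x_i,y_{\sigma(i)})<\eps$, so the ``tails'' of $\gamma$ and $\omega$ are matched essentially identically, and the limiting object is forced to be a genuine bijection. A clean way to package this is to first reduce, using cyclical monotonicity of $q_0$, to the case where $\gamma$ and $\omega$ differ only on a \emph{finite} portion of $M$ plus an exact common part that one matches trivially by the identity; the remaining finite matching problem is then handled by Birkhoff–von Neumann directly. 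I would expect roughly half a page once the bookkeeping for labelings is set up.
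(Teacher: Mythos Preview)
Your high-level strategy---reduce to the fact that the linear cost functional over bistochastic arrays attains its minimum at an extreme point, i.e.\ a permutation---is exactly the paper's. But the paper does not go through a finite exhaustion. It invokes directly Kendall's infinite Birkhoff theorem: every infinite doubly stochastic matrix lies in the closure (in a suitable locally convex topology) of the convex hull of permutation matrices. One then approximates an optimal coupling $q'$ vaguely by finite convex combinations of matchings, extracts by linearity a single matching $q_n$ with $\int d^2\,\dd q_n\le\int d^2\,\dd q'_n$, passes to a vague limit using the uniform cost bound for tightness, and checks that the limit is again a matching and optimal by lower semicontinuity. Your phrase ``invoke the analogous statement for the infinite assignment problem'' is precisely this, but you should name the reference rather than leave it implicit.

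By contrast, your two concrete alternatives both have gaps. The reduction ``$\gamma$ and $\omega$ differ only on a finite portion plus an exact common part'' is false in general: configurations at finite $\dcs$-distance can differ at \emph{every} point (take $\gamma=\sum_n\delta_n$ and $\omega=\sum_n\delta_{n+n^{-2}}$ on $\R$), so there is no common tail to peel off. The exhaustion by balls has the problem that $\gamma|_{B_r}$ and $\omega|_{B_r}$ typically have different total mass, so the finite assignment problem on $B_r$ is not even well posed, and patching this up while keeping the limit bijective is essentially a reproof of Kendall's theorem. The clean route is the paper's: cite the infinite Birkhoff theorem and run the approximation/compactness argument.
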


As an immediate consequence we obtain that
\begin{align*}
  \dcs^2(\gamma,\omega)~=~\min\left\{\sum_{i=1}^nd^2(x_i,y_i)\ :\ \gamma=\sum_i\delta_{x_i}, \omega=\sum_i\delta_{y_i}\right\}\;,
\end{align*}
provided $\dcs(\gamma,\omega)<\infty$ and $\gamma(M)=\omega(M)=n$.

\begin{proof}
  By \cite{Kendall:infiniteBirkhoff}, the set of doubly stochastic
  matrices is the closure of convex combinations of permutations
  matrices, i.e.\ doubly stochastic matrices whose entries are
  precisely 0 or 1, with respect to the locally convex topology which
  makes all elements, row sums and column sums of the matrix
  continuous. Call this the $\tau$ topology. Now take
  $q\in\mathsf{Cpl}(\gamma,\omega)$ and $f\in C_c(M\times M).$ Then
  $\int f \dd q= \sum f(x_i, y_j) q(x_i,y_j),$ for some labeling
  $(x_i)_i, (y_j)_j$ of $\gamma$ and $\omega$ respectively. Then
  $(a_{ij}=q(x_i,y_j)$ defines a doubly stochastic matrix. Fixing
  the labeling, a doubly stochastic matrix defines a coupling between
  $\gamma$ and $\omega.$ Moreover, as $\sum f(x_i, y_j) a_{ij}$
  is a finite sum, convergence in the $\tau$ topology implies
  convergence in the vague topology.

  Take $q'\in\Opt(\gamma,\omega)$. By the results of
  \cite{Kendall:infiniteBirkhoff}, there exists a sequence of
  couplings $(q_n')_n$ converging vaguely to $q'$ such that each
  $q_n'$ can be written as a (finite) convex combination of matchings
  (which correspond to permutation matrices). By the linearity of
  $q\mapsto \int d^2 \dd q$, this implies the existence of a sequence
  of matchings $(q_n)_n$ of $\gamma$ and $\omega$ such that $\int d^2
   \dd q_n\leq \int d^2 \dd q_n' \searrow \int d^2 \dd q'$. Hence,
  we have a uniform bound on the transportation cost and there is a
  converging subsequence which we denote again by $(q_n)_n$. Denote by
  $q$ its limit. By lower semicontinuity, we have
  \begin{align*}
    \int d^2 \ \dd q \leq \liminf \int d^2\ \dd q_n = \int d^2\ \dd q'\;,
  \end{align*}
  so that $q\in \Opt(\gamma,\omega).$ As all the $q_n$ are matchings
  also $q$ has to be a matching which can be seen by testing against
  functions $f_{i,j}\in C_c(M\times M)$ which satisfy
  $f_{i,j}(x_l,y_k)=\delta_{x_i,y_j}(x_k,y_k)$ for the fixed labeling
  $(x_i)_i$ and $(y_j)_j$ of $\gamma$ and $\omega$.
\end{proof}

\begin{corollary}\label{cor:geo}
  $(\cs,\dcs)$ is a geodesic space, i.e any pair $\gamma_0,\gamma_1$
  with $\dcs(\gamma_0,\gamma_1)<\infty$ can be connected by a curve
  $(\gamma_t)_{t\in[0,1]}$ such that for all $s,t\in[0,1]$ we have
  $\dcs(\gamma_s,\gamma_t)=\abs{t-s}\dcs(\gamma_0,\gamma_1)$.
\end{corollary}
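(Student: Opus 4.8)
The plan is to build a geodesic between $\gamma_0$ and $\gamma_1$ by moving mass along minimizing geodesics in $M$, using the matching provided by Lemma \ref{lem:existence of matching}. Concretely, fix $\gamma_0,\gamma_1\in\cs$ with $\dcs(\gamma_0,\gamma_1)<\infty$ and let $q$ be an optimal coupling which is a matching. Using $q$ we obtain labelings $\gamma_0=\sum_i\delta_{x_i}$ and $\gamma_1=\sum_i\delta_{y_i}$ with $q$ pairing $x_i$ with $y_i$, so that $\dcs^2(\gamma_0,\gamma_1)=\sum_i d^2(x_i,y_i)$. Since $M$ is a complete Riemannian manifold it is geodesic (Hopf--Rinow), so for each $i$ we may pick a constant-speed minimizing geodesic $t\mapsto\sigma_i(t)$ in $M$ from $x_i$ to $y_i$, i.e.\ $d(\sigma_i(s),\sigma_i(t))=|t-s|\,d(x_i,y_i)$. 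Then define
\begin{align*}
  \gamma_t~:=~\sum_i\delta_{\sigma_i(t)}\;,\qquad t\in[0,1]\;.
\end{align*}

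The first technical point is that $\gamma_t$ is actually a well-defined element of $\cs$, i.e.\ a \emph{locally finite} counting measure. This is not automatic, since infinitely many geodesics could in principle pile up on a compact set at an intermediate time. The key estimate is that $d(\sigma_i(t),x_i)\le d(x_i,y_i)$ for all $t$, and $\sum_i d^2(x_i,y_i)<\infty$ forces $d(x_i,y_i)\to0$; since $\gamma_0$ is locally finite, only finitely many $x_i$ lie in any fixed ball $B(x_0,R)$, and the remaining points stay far from $B(x_0,R)$ provided $d(x_i,y_i)$ is small, so only finitely many $\sigma_i(t)$ can enter $B(x_0,R)$. More precisely, for any $R$, all but finitely many indices $i$ satisfy both $d(x_i,y_i)\le 1$ and $x_i\notin B(x_0,R+1)$, whence $\sigma_i(t)\notin B(x_0,R)$ for all $t$; this gives $\gamma_t(B(x_0,R))<\infty$, so $\gamma_t\in\cs$.

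It remains to check the geodesic property $\dcs(\gamma_s,\gamma_t)=|t-s|\,\dcs(\gamma_0,\gamma_1)$. For the upper bound, the coupling pairing $\sigma_i(s)$ with $\sigma_i(t)$ is admissible for $\dcs(\gamma_s,\gamma_t)$, so
\begin{align*}
  \dcs^2(\gamma_s,\gamma_t)~\le~\sum_i d^2\big(\sigma_i(s),\sigma_i(t)\big)~=~|t-s|^2\sum_i d^2(x_i,y_i)~=~|t-s|^2\,\dcs^2(\gamma_0,\gamma_1)\;.
\end{align*}
In particular $\dcs(\gamma_s,\gamma_t)<\infty$ for all $s,t$. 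For the matching lower bound, one uses the triangle inequality: for $0\le s\le t\le 1$,
\begin{align*}
  \dcs(\gamma_0,\gamma_1)~\le~\dcs(\gamma_0,\gamma_s)+\dcs(\gamma_s,\gamma_t)+\dcs(\gamma_t,\gamma_1)~\le~\big(s+(t-s)+(1-t)\big)\dcs(\gamma_0,\gamma_1)\;,
\end{align*}
and since the outer terms are equal, all three inequalities in the middle must be equalities; this forces $\dcs(\gamma_s,\gamma_t)=(t-s)\dcs(\gamma_0,\gamma_1)$.

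I expect the main obstacle to be the local finiteness of $\gamma_t$, i.e.\ verifying rigorously that the intermediate configurations genuinely lie in $\cs$; the geodesic identity itself is then a soft triangle-inequality argument once the endpoints and the curve are known to be at finite distance. A minor subtlety is measurability of the map $t\mapsto\gamma_t$ and the (harmless) choice of minimizing geodesics $\sigma_i$, which can be handled by a measurable selection or simply ignored since we only need existence of the curve.
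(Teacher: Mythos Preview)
Your proposal is correct and follows essentially the same route as the paper: choose an optimal matching via Lemma~\ref{lem:existence of matching}, connect matched points by minimizing geodesics in $M$, bound $\dcs(\gamma_s,\gamma_t)$ from above using the induced coupling, and get the reverse inequality from the triangle inequality. In fact you are more careful than the paper, which simply writes $\gamma_t=\sum_i\delta_{x_i^t}$ without checking that this is a locally finite measure; your argument that only finitely many $\sigma_i(t)$ can meet a given ball (using $d(\sigma_i(t),x_i)\le d(x_i,y_i)\to0$ together with local finiteness of $\gamma_0$) fills this gap correctly.
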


\begin{proof}
  Choose labelings $(x_i^j)_i$ of $\gamma_j$ such that
  $\dcs^2(\gamma_0,\gamma_1)=\sum_id^2(x_i^0,x_i^1)$. For each $i$
  choose a geodesic $(x_i^t)_{t\in[0,1]}$ and put
  $\gamma_t=\sum_i\delta_{x_i^t}$. Then $(\gamma_t)_t$ is a geodesic
  in $\cs$. Indeed,
  \begin{align*}
    \dcs^2(\gamma_s,\gamma_t)~\leq~ \sum_id^2(x_i^s,x_i^t) ~=~
    \abs{t-s}^2 \sum_i d^2(x_i^0,x_i^1) ~=~
    \abs{t-s}^2\dcs^2(\gamma_0,\gamma_1)\;.
  \end{align*}
  The reverse inequality follows from the triangle inequality.
\end{proof}

\section{Optimal transport on configuration space}\label{sec:OT}

We denote the set of probability measures on $\cs$ by $\cP(\cs).$ For
$\mu,\nu\in\cP(\cs)$ the $L^2$-Wasserstein distance is defined via
\begin{align*}
  W_2^2(\mu,\nu):=\inf_{q\in\Cpl(\mu,\nu)} \int \frac{1}{2}
  \dcs^2(\gamma,\eta)\ q(d\gamma,d\eta)\;,
\end{align*}
where $\Cpl(\mu,\nu)$ denotes the set of all couplings between $\mu$
and $\nu$. A minimizer is called optimal coupling and the set of all
optimal couplings between $\mu$ and $\nu$ will be denoted by
$\Opt(\mu,\nu).$ This transportation problem has been studied in the
case of $M=\R^k$ in \cite{De08}; the generalization to Riemannian
manifolds is straightforward. The main result states

\begin{theorem}[\cite{De08}]
  Let $\mu,\nu\in\cP(\cs)$ with $W_2(\mu,\nu)< \infty.$ Assume that
  $\mu\ll\pi$. Then, there is a unique optimal coupling $q$ which is
  induced by a transportation map, i.e.\ $q=(id,T)_*\mu.$
\end{theorem}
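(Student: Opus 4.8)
The strategy is to lift the classical Brenier-type result on $M$ to the configuration space via disintegration over the ``shape'' of a configuration, exploiting the structure established so far. First I would reduce to a fiberwise statement: since $\mu \ll \pi$ and $W_2(\mu,\nu) < \infty$, the only couplings of finite cost are concentrated on pairs $(\gamma,\eta)$ with $\dcs(\gamma,\eta) < \infty$. Fix an optimal coupling $q \in \Opt(\mu,\nu)$, which exists by the direct method (lower semicontinuity of the cost and tightness of couplings of $\mu$ and $\nu$). By Lemma~\ref{lem:existence of matching} we may, $q$-a.e., label the pair $(\gamma,\eta) = \big(\sum_i \delta_{x_i}, \sum_i \delta_{y_i}\big)$ so that $\dcs^2(\gamma,\eta) = \sum_i d^2(x_i,y_i)$. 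The plan is then to show that $\eta$ is $q$-a.s. a \emph{deterministic function of} $\gamma$.

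The key step is a localization argument. For a bounded Borel set $A \subset M$ with $m(A) < \infty$ and $n \in \N_0$, restrict attention to the event $\{\gamma(A) = n\}$ and decompose a configuration as $\gamma = \gamma|_A + \gamma|_{A^c}$. Conditionally on $\gamma(A) = n$, the measure $\mu$ disintegrates, and because $\mu \ll \pi$, the conditional law of the $n$ points of $\gamma$ inside $A$ is absolutely continuous with respect to $m^{\otimes n}/n!$ on the symmetrized product $A^n$. On this finite-dimensional piece the transport cost decouples as a sum $\sum_{i \le n} d^2(x_i, y_i)$ over an optimal matching, so the classical McCann--Brenier theorem for the quadratic cost on the Riemannian manifold $A^n$ (valid since the source is absolutely continuous) gives a unique optimal map there. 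The point is to check that an optimal coupling on $\cs$ restricts to an optimal coupling on each such finite-dimensional fiber — this follows from the standard cyclical-monotonicity / cutting argument: if the restriction were not optimal one could strictly improve it, contradicting optimality of $q$. Gluing these fiberwise maps over an exhausting sequence $A_k \uparrow M$ and over all values of $\gamma(A_k)$, and using that a configuration in $\cs$ is determined by its restrictions to the $A_k$, produces a single measurable map $T : \cs \to \cs$ with $q = (\mathrm{id}, T)_* \mu$; uniqueness of $q$ follows from the fiberwise uniqueness.

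The main obstacle is the bookkeeping needed to pass from the finite-dimensional pieces to the whole configuration space in a measurable and consistent way: one must ensure that the labels chosen on $A_k$ and on $A_{k+1} \setminus A_k$ patch together into a globally optimal matching (a priori the optimal matching on a larger region need not restrict to the optimal matching on a subregion unless one argues via restriction of \emph{optimal} couplings, not optimal \emph{distances}), and that the resulting assignment $\gamma \mapsto \eta$ is jointly measurable with respect to the vague topology. Handling the infinitely many points — so that the sum $\sum_i d^2(x_i,y_i)$ converges and no mass escapes to infinity — is controlled precisely by the finiteness $W_2(\mu,\nu) < \infty$ together with the matching structure from Lemma~\ref{lem:existence of matching}. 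Since the statement is quoted from \cite{De08} with only the passage from $\R^k$ to a general manifold $M$ being new, I expect this measurable-selection and patching step to be the only real content, the rest being a routine transcription of the Euclidean argument with $d^2$ in place of $|x-y|^2$.
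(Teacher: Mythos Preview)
The paper does not give its own proof of this theorem: it is quoted from \cite{De08}, with only the remark that the extension from $\R^k$ to a general Riemannian manifold is straightforward. So there is no in-paper argument to compare against; you correctly recognise this at the end of your proposal.

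That said, your outline has a genuine gap at the localization step, not merely at the patching stage you flag. You propose to condition on $\{\gamma(A)=n\}$ and then assert that ``the transport cost decouples as a sum $\sum_{i\le n} d^2(x_i,y_i)$'' so that McCann--Brenier on $A^n$ applies. But there is no well-posed transport problem on $A^n$ here: the target points $y_1,\dots,y_n$ are those points of $\eta$ matched to $\gamma\cap A$ under a \emph{global} optimal matching of $\gamma$ and $\eta$, and hence depend on the full pair $(\gamma,\eta)$, not only on $(x_1,\dots,x_n)$. Conditioning on $\gamma|_{A^c}$ does not produce a fixed target measure on $A^n$ (or even on $M^n$); what it actually yields is a coupling between an absolutely continuous law on the symmetrized product $A^n$ and a law on all of $\cs$, with cost $(x_1,\dots,x_n;\eta)\mapsto \dcs^2\big(\gamma|_{A^c}+\sum_i\delta_{x_i},\eta\big)$, which is not the product quadratic cost on $A^n$. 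So the ``finite-dimensional piece'' is not a classical Brenier problem at all. One can try to rescue the scheme by verifying a twist condition for this non-standard cost (via a.e.\ differentiability of $\gamma\mapsto\dcs^2(\gamma,\eta)$ when $\mu\ll\pi$, in the spirit of Theorem~\ref{thm:Rademacher}), but that is essentially the full content of the theorem rather than ``a routine transcription of the Euclidean argument''.
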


\subsection{Duality and Hopf--Lax semigroup}\label{sec:dual and HL}

By general theory, see \cite[Thm. 2.2]{Kel84}, we have the following
Kantorovich duality

\begin{theorem}\label{thm:duality}
  Let $\mu,\nu\in P(\cs)$ such that $W_2(\mu,\nu)<\infty$. Then we
  have 
  \begin{align}\label{eq:duality1}
    W^2_2(\mu,\nu)~=~\sup\left\{\int\phi^c \dd \nu + \int \phi\dd \mu~:~\phi\in C_b(\cs)\right\}\;,
  \end{align}
where the \emph{c-transform} of $\phi$ is defined by
\begin{align*}
  \phi^c(\gamma)~=~\inf_{\eta\in\cs}\left\{\frac12 \dcs^2(\gamma,\eta) - \phi(\eta)\right\}\;.
\end{align*}
\end{theorem}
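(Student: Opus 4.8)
The plan is to deduce \eqref{eq:duality1} from the abstract Kantorovich duality theorem of Kellerer, followed by a routine passage to $c$-transforms. The first step is to record that the cost $c(\gamma,\eta):=\tfrac12\dcs^2(\gamma,\eta)$ is nonnegative and jointly lower semicontinuous on the Polish space $\cs\times\cs$ — this is precisely the lower semicontinuity of $\dcs$ with respect to the vague topology noted above. Since $W_2(\mu,\nu)<\infty$ there is a coupling of finite cost, so \cite[Thm.~2.2]{Kel84}, which applies to nonnegative lower semicontinuous costs on Polish spaces, gives the absence of a duality gap:
\begin{align*}
  W_2^2(\mu,\nu)~=~\inf_{q\in\Cpl(\mu,\nu)}\int \tfrac12\dcs^2\dd q~=~\sup\Big\{\int u\dd\mu + \int v\dd\nu\Big\}\;,
\end{align*}
where the supremum runs over all $u,v\in C_b(\cs)$ with $u(\gamma)+v(\eta)\le\tfrac12\dcs^2(\gamma,\eta)$ for all $\gamma,\eta\in\cs$; by symmetry of $\dcs$ the same identity holds with $\mu$ and $\nu$ interchanged. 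It then remains to show that this dual value equals the supremum over the distinguished pairs $\big(\phi,\phi^c\big)$, $\phi\in C_b(\cs)$, appearing in \eqref{eq:duality1}.

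For the inequality ``$\ge$'' in \eqref{eq:duality1} I would argue directly, without using Kellerer. For fixed $\phi\in C_b(\cs)$ the $c$-transform satisfies $\phi^c(\gamma)+\phi(\eta)\le\tfrac12\dcs^2(\gamma,\eta)$ everywhere and $\phi^c\le\|\phi\|_\infty$ (take $\eta=\gamma$); moreover $\phi^c$ is universally measurable, since for each $t$ the set $\{\phi^c<t\}$ is the projection onto $\cs$ of the Borel set $\{(\gamma,\eta):\tfrac12\dcs^2(\gamma,\eta)-\phi(\eta)<t\}$, hence analytic. Being thus bounded above, $\phi^c$ has a well-defined integral $\int\phi^c\dd\nu\in[-\infty,\infty)$. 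Integrating the pointwise bound against an arbitrary coupling of $\nu$ and $\mu$ and passing to the infimum then gives $\int\phi\dd\mu+\int\phi^c\dd\nu\le W_2^2(\mu,\nu)$, and I take the supremum over $\phi$.

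For the reverse inequality I would start from an admissible pair $(u,v)\in C_b(\cs)\times C_b(\cs)$ for the dual problem of the ordered pair $(\nu,\mu)$, set $\phi:=v$, and observe that the constraint $u(\gamma)+v(\eta)\le\tfrac12\dcs^2(\gamma,\eta)$ forces $u\le\phi^c$ pointwise; hence $\int u\dd\nu+\int v\dd\mu\le\int\phi^c\dd\nu+\int\phi\dd\mu$, which is at most $\sup_{\phi\in C_b(\cs)}\big\{\int\phi\dd\mu+\int\phi^c\dd\nu\big\}$. Taking the supremum over admissible $(u,v)$ and inserting the Kellerer identity for the pair $(\nu,\mu)$ (together with $W_2(\mu,\nu)=W_2(\nu,\mu)$) yields $W_2^2(\mu,\nu)\le\sup_{\phi\in C_b(\cs)}\big\{\int\phi\dd\mu+\int\phi^c\dd\nu\big\}$, and combined with the previous paragraph this is exactly \eqref{eq:duality1}.

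The only substantial ingredient is the no-gap statement, which I import wholesale from \cite{Kel84}; everything else is bookkeeping. The two points I would be mildly careful about are the joint lower semicontinuity of $\tfrac12\dcs^2$ for the \emph{extended} distance $\dcs$ in the vague topology, and the measurability together with boundedness from above of $\phi^c$ that make $\int\phi^c\dd\nu$ meaningful — both handled above.
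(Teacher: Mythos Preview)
Your proof is correct and follows the same route as the paper, which simply invokes \cite[Thm.~2.2]{Kel84} without further argument. Your additional bookkeeping --- the passage from admissible pairs $(u,v)$ to pairs $(\phi,\phi^c)$ and the care about lower semicontinuity of $\tfrac12\dcs^2$ and universal measurability of $\phi^c$ --- just makes explicit the routine steps the paper leaves implicit.
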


It is not known if the supremum is attained or not. For a function
$f:\cs\to\R\cup\{\infty\}$ we define the Hopf--Lax semigroup

\begin{align*}
  Q_t f(\gamma)~=~\inf\limits_{\eta\in\cs} \left\{f(\eta) + \frac{\dcs^2(\eta,\gamma)}{2t}\right\}\;.
\end{align*}

The function $Q_t f$ is non trivial on the set
$$ \mathcal D(f):=\{ \gamma\in\cs: \dcs(\gamma,\omega)<\infty \mbox{ for some $\omega$ with } f(\omega)<\infty\}.$$
For $\gamma\in \mathcal D(f)$ we set 
$$t_*(\gamma):=\sup\{ t>0: Q_tf(\gamma)>-\infty\}$$

with the convention that $t_*(\gamma)=0$ if $Q_tf(\gamma)=-\infty$ for
all $t>0.$ If $f$ is bounded also $Q_tf$ is bounded, even
$\dcs$-Lipschitz (with global Lipschitz bound $\mbox{Lip}(Q_tf)\leq
2\sqrt{\mbox{osc}(f)/t}$ where $\mbox{osc}(f)=\sup f-\inf f$), and
$t_*=\infty$ for all $\gamma.$ Note that if $f$ is $\dcs$-Lipschitz,
so is $Q_tf$ with a priori bound (\cite[Prop.~3.4]{AGS11a})
\begin{align}\label{eq:Lip-HL}
  \abs{DQ_s\phi}~\leq~2\Lip(\phi)\;.
\end{align}
Since $(\cs,\dcs)$ is a length space, this implies $\Lip(Q_s\phi)\leq
2\Lip(\phi)$.  For more details we refer to Section 3 of
\cite{AGS11a}. In particular, if $f\in C_b(\cs)$ then $Q_1(-f)=f^c$ is
$\dcs-$Lipschitz. Hence, we have

\begin{corollary}\label{cor:duality}
  Let $\mu,\nu\in P(\cs)$ such that $W_2(\mu,\nu)<\infty$. Then we
  have
  \begin{align}\label{eq:duality2}
    W^2_2(\mu,\nu)~=~\sup\left\{\int\phi^c \dd \nu + \int \phi\dd \mu~:~\phi\in\mathsf{Lip}_b(\cs)\cap C(\cs) \right\}\;,
  \end{align}
\end{corollary}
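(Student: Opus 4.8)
The plan is to derive \eqref{eq:duality2} from the Kantorovich duality of Theorem~\ref{thm:duality} by observing that the $c$-transform of a bounded continuous function already carries all the regularity required in \eqref{eq:duality2}, so that restricting the supremum to $\Lip_b(\cs)\cap C(\cs)$ cannot lower its value.

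One direction is immediate: a bounded $\dcs$-Lipschitz function that is in addition vaguely continuous lies in $C_b(\cs)$, so $\Lip_b(\cs)\cap C(\cs)\subseteq C_b(\cs)$ and, by Theorem~\ref{thm:duality}, the supremum in \eqref{eq:duality2} is at most $W_2^2(\mu,\nu)$. For the reverse inequality, fix $\eps>0$ and apply Theorem~\ref{thm:duality} to the pair $(\nu,\mu)$ (using symmetry of $W_2$) to obtain $\psi\in C_b(\cs)$ with $\int\psi^c\dd\mu+\int\psi\dd\nu>W_2^2(\mu,\nu)-\eps$. Set $\phi:=\psi^c=Q_1(-\psi)$, which is bounded, and suppose for now that $\phi\in\Lip_b(\cs)\cap C(\cs)$. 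The only $c$-transform identity needed is $\psi^{cc}\ge\psi$; it holds because $\psi^c(\eta)\le\tfrac12\dcs^2(\gamma,\eta)-\psi(\gamma)$ for all $\gamma,\eta\in\cs$, so that $\psi(\gamma)\le\inf_{\eta}\big(\tfrac12\dcs^2(\gamma,\eta)-\psi^c(\eta)\big)=\psi^{cc}(\gamma)$, and this uses only symmetry and nonnegativity of the cost $\tfrac12\dcs^2$, hence remains valid verbatim for the extended metric. Since $\phi^c=\psi^{cc}$ and all functions involved are bounded,
\begin{align*}
  \int\phi^c\dd\nu+\int\phi\dd\mu~=~\int\psi^{cc}\dd\nu+\int\psi^c\dd\mu~\ge~\int\psi\dd\nu+\int\psi^c\dd\mu~>~W_2^2(\mu,\nu)-\eps\;,
\end{align*}
and letting $\eps\to0$ proves \eqref{eq:duality2}.

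It remains to justify the claim $\phi=\psi^c\in\Lip_b(\cs)\cap C(\cs)$, and this is the only real point. Boundedness and the $\dcs$-Lipschitz property of $\psi^c=Q_1(-\psi)$ were recorded just before the corollary (they hold since $\psi$ is bounded). The subtle part is \emph{vague} continuity, which does not follow from the Lipschitz bound: $\gamma\mapsto\dcs(\gamma,\gamma_0)\wedge 1$ is bounded and $\dcs$-Lipschitz yet not vaguely continuous, since it jumps to $1$ along a vaguely convergent sequence that crosses between connected components of $(\cs,\dcs)$. I would establish vague continuity of $\psi^c$ by proving vague upper and lower semicontinuity separately. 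Upper semicontinuity is free, because $-\psi^c(\gamma)=\sup_{\eta\in\cs}\big(\psi(\eta)-\tfrac12\dcs^2(\gamma,\eta)\big)$ is a supremum of vaguely upper semicontinuous functions of $\gamma$ (using that $\dcs$ is vaguely lower semicontinuous), hence vaguely lower semicontinuous. For lower semicontinuity of $\psi^c$, take $\gamma_n\to\gamma$ vaguely, choose near-minimizers $\eta_n$ with $\tfrac12\dcs^2(\gamma_n,\eta_n)-\psi(\eta_n)\le\psi^c(\gamma_n)+\tfrac1n$; boundedness of $\psi$ forces $\sup_n\dcs(\gamma_n,\eta_n)<\infty$, so by Lemma~\ref{lem:existence of matching} (realizing these distances by matchings) each atom of $\eta_n$ stays in a fixed-radius ball around the corresponding atom of $\gamma_n$. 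Combined with $\gamma_n\to\gamma$ vaguely, local finiteness of configurations, and completeness of $M$, the sequence $(\eta_n)$ is vaguely precompact in $\cs$; passing to a subsequence $\eta_n\to\eta^*\in\cs$ and using joint vague lower semicontinuity of $\dcs$ together with vague continuity of $\psi$ gives $\liminf_n\psi^c(\gamma_n)\ge\tfrac12\dcs^2(\gamma,\eta^*)-\psi(\eta^*)\ge\psi^c(\gamma)$.

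The step I expect to be the main obstacle is precisely this: upgrading the a priori $\dcs$-Lipschitz bound on $Q_1(-\psi)$ to genuine vague continuity, which requires controlling near-optimal competitors in the $c$-transform and exploiting vague compactness of $\dcs$-balls; everything else (the inclusion $\Lip_b(\cs)\cap C(\cs)\subseteq C_b(\cs)$, the inequality $\psi^{cc}\ge\psi$, and the comparison of objectives) is routine.
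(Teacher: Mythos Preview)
Your argument for upper semicontinuity of $\psi^c$ contains a genuine error. You write that $-\psi^c(\gamma)=\sup_{\eta}\big(\psi(\eta)-\tfrac12\dcs^2(\gamma,\eta)\big)$, being a supremum of vaguely \emph{upper} semicontinuous functions of $\gamma$, is therefore vaguely \emph{lower} semicontinuous. This implication is false: a supremum of upper semicontinuous functions need not be lower semicontinuous. On $\R$, each indicator $\1_{\{q\}}$ with $q$ rational is upper semicontinuous, yet $\sup_{q}\1_{\{q\}}=\1_{\mathbb Q}$ fails to be lower semicontinuous at every rational point. (What is true is that a supremum of \emph{lower} semicontinuous functions is lower semicontinuous.) So upper semicontinuity of $\psi^c$ is not ``free''. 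If one wants it, one has to build competitors $\eta_n$ adapted to the vaguely convergent sequence $\gamma_n\to\gamma$, much as in your l.s.c.\ step: take a near-minimizer $\eta$ for $\gamma$, perturb its atoms along the motion of those atoms of $\gamma_n$ that converge, and attach dummy atoms at the escaping points of $\gamma_n$; then $\dcs(\gamma_n,\eta_n)\to\dcs(\gamma,\eta)$ and $\psi(\eta_n)\to\psi(\eta)$ by vague continuity of $\psi$. Your l.s.c.\ argument, by contrast, is essentially sound.

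By comparison, the paper's own justification is much shorter and does not address vague continuity at all: it only records that for any bounded $f$ one has $f^c=Q_1(-f)\in\Lip_b(\cs)$, which via the standard $c$-transform iteration ($\phi^{cc}\ge\phi$ and $(\phi^{cc})^c=\phi^c$) shows that the supremum in Theorem~\ref{thm:duality} is already achieved over $\Lip_b(\cs)$. The additional requirement $\phi\in C(\cs)$ in the stated corollary is not derived from this reasoning; in the only place the corollary is invoked (the action estimate, Proposition~\ref{prop:action-est}), the supremum is taken over bounded $\dcs$-Lipschitz $\phi$ without any vague-continuity hypothesis, so the weaker version already suffices there. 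You correctly identified vague continuity as the delicate point---but your shortcut to the u.s.c.\ half does not work.
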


Recall the local Lipschitz constant from \eqref{def:locLip}. The next
proposition states that the Hopf--Lax semigroup yields a solution
of the Hamilton--Jacobi equation.

\begin{proposition}\cite[Thm. 3.6]{AGS11a}
  \label{prop:HJsubsol}
  For $\gamma\in\mathcal D(f)$ and $t\in(0,t_*(\gamma))$ it holds that
  \begin{align*}
    \frac{\dd}{\dd t}Q_tf(\gamma) +
    \frac{|DQ_tf(\gamma)|^2(\gamma)}{2}~=~ 0
  \end{align*}
  with at most countably many exceptions in $(0,t_*(\gamma))$.
\end{proposition}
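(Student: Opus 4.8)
The plan is to adapt \emph{verbatim} the purely metric argument of Ambrosio--Gigli--Savar\'e \cite[Section 3]{AGS11a}. The key observation is that every step there is \emph{local} around the base configuration $\gamma$: it involves only $\gamma$ itself, near-minimizers $\eta$ for $Q_tf(\gamma)$ (which by definition of $\cD(f)$ and of $t_*(\gamma)$ lie at finite $\dcs$-distance from $\gamma$), and the mutual distances among such configurations. Hence the fact that $\dcs$ is merely an \emph{extended} distance is irrelevant, and the only structural input beyond the triangle inequality is that $(\cs,\dcs)$ is a geodesic space, which is Corollary \ref{cor:geo}.

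Concretely, I would fix $\gamma\in\cD(f)$ and, for $t\in\big(0,t_*(\gamma)\big)$, introduce the analogues of the ``lower and upper diameter of the set of minimizers'',
\begin{align*}
  D^+(\gamma,t)~&:=~\sup\Big\{\limsup_n\dcs(\gamma,\eta_n)\ :\ f(\eta_n)+\tfrac{1}{2t}\dcs^2(\gamma,\eta_n)\to Q_tf(\gamma)\Big\}\;,\\
  D^-(\gamma,t)~&:=~\inf\Big\{\liminf_n\dcs(\gamma,\eta_n)\ :\ f(\eta_n)+\tfrac{1}{2t}\dcs^2(\gamma,\eta_n)\to Q_tf(\gamma)\Big\}\;.
\end{align*}
Working with minimizing sequences here (rather than with minimizers) is forced on us because $\cs$ is not locally compact and the infimum defining $Q_tf$ need not be attained; this is the one place where some care is needed compared with the finite-dimensional situation.

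First I would record the elementary facts: since $s\mapsto\tfrac{1}{2s}\dcs^2(\gamma,\eta)$ is decreasing, $t\mapsto Q_tf(\gamma)$ is non-increasing on $\big(0,t_*(\gamma)\big)$; comparing the values at two nearby times against a common near-minimizer shows that it is locally Lipschitz there and that $D^+(\gamma,\cdot)$, $D^-(\gamma,\cdot)$ are non-decreasing, with $D^+(\gamma,s)\le D^-(\gamma,t)$ for $s<t$. Local Lipschitzness gives differentiability of $t\mapsto Q_tf(\gamma)$ off a countable set, and comparing left and right difference quotients (exactly as in \cite[Section 3]{AGS11a}) upgrades this to
\begin{align*}
  \ddt Q_tf(\gamma)~=~-\frac{D^+(\gamma,t)^2}{2t^2}~=~-\frac{D^-(\gamma,t)^2}{2t^2}
\end{align*}
for all but countably many $t\in\big(0,t_*(\gamma)\big)$; in particular $D^+(\gamma,t)=D^-(\gamma,t)=:D(\gamma,t)$ off a countable set.

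It then remains to identify the local Lipschitz constant $\abs{DQ_tf}(\gamma)$ with $D(\gamma,t)/t$. The bound $\abs{DQ_tf}(\gamma)\le D^+(\gamma,t)/t$ uses only the triangle inequality: for $\omega$ near $\gamma$ one tests the infimum defining $Q_tf(\omega)$ against a near-minimizer of $Q_tf(\gamma)$. The reverse bound $\abs{DQ_tf}(\gamma)\ge D^-(\gamma,t)/t$ is the substantive step, and it is here that the geodesic property enters: given a near-minimizer $\eta$ one perturbs $\gamma$ slightly along a constant-speed geodesic from $\gamma$ to $\eta$ (available by Corollary \ref{cor:geo}, and staying at finite $\dcs$-distance from $\gamma$ throughout) and estimates from below the resulting decrease of $Q_tf$. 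Combining the two bounds with the previous display yields $\ddt Q_tf(\gamma)+\tfrac{1}{2}\abs{DQ_tf}^2(\gamma)=0$ off a countable set, which is the assertion. I expect no genuine obstacle: the argument is that of \cite{AGS11a}, the only non-routine adjustment being the consistent use of minimizing sequences, and the only geometric input beyond the triangle inequality the existence of geodesics supplied by Corollary \ref{cor:geo}.
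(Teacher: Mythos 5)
Your sketch is a faithful reconstruction of the argument behind \cite[Thm.\ 3.6]{AGS11a}, which is exactly what the paper invokes here: the paper does not reprove the proposition but cites it directly, having first verified the needed structural hypotheses (that $(\cs,\dcs)$ is a Polish extended length space, and, via Corollary \ref{cor:geo}, even geodesic). Your outline correctly identifies the two-sided identification of $\abs{DQ_tf}(\gamma)$ with $D(\gamma,t)/t$ as the crux, correctly locates the geodesic property as the only input beyond the triangle inequality (needed for the lower bound), and correctly insists on minimizing sequences since the infimum defining $Q_tf$ need not be attained in $\cs$; this matches the cited treatment.
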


\section{Manifestations of curvature on $\cs$}
\label{sec:curv-manifest}

In this section we derive several curvature properties of the
configuration space by ``lifting'' the corresponding statement from
the base manifold $M$ to $\cs$.

\subsection{Sectional curvature bounds}
\label{sec:alex}

We start by showing that the configuration space inherits Alexandrov
curvature bounds from the base space.

By Toponogov's triangle comparison theorem a lower bound on the
sectional curvature of a Riemannian manifold can be characterized by a
condition involving only the distance function. This allows to
generalize the notion of sectional curvature bounds to metric spaces
and gives rise to Alexandrov spaces. Loosely put, an Alexandrov space
with curvature bounded below by $K\in\R$ is a complete length space
$(X,d)$ in which triangles are ``thicker'' than in the space form of
constant curvature $K$. We refer to \cite{BBI} for a nice and
comprehensive treatment of Alexandrov geometry. There are various
equivalent ways of characterizing Alexandrov curvature. We will use
the following taken from \cite{LP10}:

\begin{definition}\label{def:Alexcurv}
  A complete length space $(X,d)$ is an Alexandrov space with
  curvature bounded below by $K\in\R$ iff the following holds: For
  each quadruple of points $x_0,x_1,x_2,x_3\in X$ we have:
  \begin{align}\nonumber
    \sum_{i=1}^3 d^2(x_0,x_i)
     ~&\geq~
     \frac16 \sum_{i,j=1}^3 d^2(x_i,x_j)\;, & \text{if } K=0\;,\\\label{eq:Alexdef}
    \left(\sum_{i=1}^3\cosh\big(\sqrt{\abs{K}}d(x_0,x_i)\big)\right)^2
     ~&\geq~
    \sum_{i,j=1}^3 \cosh\big(\sqrt{\abs{K}}d(x_i,x_j)\big)\;, &\text{if } K<0\;.\\\nonumber 
    \left(\sum_{i=1}^3 \cos\big(\sqrt{K}d(x_0,x_i)\big)\right)^2
    ~&\leq~ 
    \sum_{i,j=1}^3 \cos\big(\sqrt{K}d(x_i,x_j)\big)\;,& \text{if } K>0\;.
  \end{align}
\end{definition}

\begin{remark}
  There is a variant of this characterization by Sturm,
  \cite{Sturm99}. The proof of Theorem \ref{thm:sec-bounds} adapts
  with only minor changes.
\end{remark}

Note in particular that the Riemannian manifold $M$ has sectional
curvature bounded below by $K$ if and only if its Riemannian distance $d$
satisfies \eqref{eq:Alexdef}. Definition \ref{def:Alexcurv} does not
apply immediately to extended metric spaces such as the configuration
space $(\cs,\dcs)$. However, considering the fibers $\cs_\sigma
:=\{\gamma\in\cs:\dcs(\gamma,\sigma)<\infty\}$, we note that
$(\cs_\sigma,\dcs)$ is a complete length metric space for each
$\sigma\in\cs$.

\begin{theorem}\label{thm:sec-bounds}
  Assume that the base manifold $M$ has sectional curvature bounded
  below by $K\in\R$. Then (any fiber of) $(\cs,d_\cs)$ is an
  Alexandrov space with curvature bounded below by $\min\{K,0\}$ in
  the sense of Definition \ref{def:Alexcurv}.
\end{theorem}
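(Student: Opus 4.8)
The plan is to reduce the Alexandrov inequality on the fiber $(\cs_\sigma,\dcs)$ to the corresponding inequality on $M$ applied coordinate-wise. Fix four configurations $\gamma_0,\gamma_1,\gamma_2,\gamma_3$ in a common fiber, so that all pairwise distances $\dcs(\gamma_i,\gamma_j)$ are finite. The first step is to choose a \emph{simultaneous labeling}: I want labelings $\gamma_i=\sum_k\delta_{x_k^i}$ of all four configurations such that for every pair $(i,j)$ the matching $k\mapsto(x_k^i,x_k^j)$ is optimal, i.e.\ $\dcs^2(\gamma_i,\gamma_j)=\sum_k d^2(x_k^i,x_k^j)$. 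If such a labeling exists, then writing $K'=\min\{K,0\}\le 0$ and applying the $M$-inequality \eqref{eq:Alexdef} (the $K'=0$ or $K'<0$ case) to the quadruple $x_k^0,x_k^1,x_k^2,x_k^3$ for each fixed $k$, and then summing over $k$ (for $K'=0$) or multiplying the exponentiated versions (for $K'<0$, after converting $\cosh$ sums to products via $\cosh(a)=\tfrac12(\e^a+\e^{-a})$ and using that sums of products dominate — see below), yields the desired inequality on $\cs_\sigma$. Since $M$ has sectional curvature $\ge K$, it also has curvature $\ge K'=\min\{K,0\}$, so the pointwise inequality is available for this constant; this is precisely why the conclusion is stated with $\min\{K,0\}$ rather than $K$.

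The existence of a simultaneous labeling is the crux. I would argue as follows. All four configurations have the same total mass $n\in\N_0\cup\{\infty\}$ (finiteness of $\dcs$ forces equal mass on each $\cs^{(m)}$). If $n<\infty$ this is essentially a finite combinatorial fact: fix a labeling of $\gamma_0$ and use Lemma~\ref{lem:existence of matching} to get optimal matchings; relabel so that $\gamma_1$ is matched index-by-index to $\gamma_0$, and likewise for $\gamma_2,\gamma_3$. But one must be careful that re-choosing the labeling of $\gamma_0$ to accommodate $\gamma_1$ does not spoil the match with $\gamma_2$. The honest route is: do \emph{not} insist that the matching between $\gamma_i$ and $\gamma_j$ respect a single global index set; instead, for each fixed $k$ I only need \emph{some} quadruple of points, one from each $\gamma_i$, and the key identity is that one can realize all six optimal couplings \emph{simultaneously} by a single labeling. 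In the finite case this follows because optimal matchings can be composed: if $q_{0i}$ is the optimal matching $\gamma_0\to\gamma_i$ realized as a permutation $\sigma_i$ of a fixed labeling of $\gamma_0$, then labeling $\gamma_i$ by $x_k^i := (\text{$\sigma_i$-image of the $k$-th point of }\gamma_0)$ makes $\gamma_0\to\gamma_i$ optimal by construction, but $\gamma_i\to\gamma_j$ need not be. So this naive approach fails, and one genuinely needs the geometry of $M$: the right statement is that it suffices to have, for each $k$, points $x_k^i$ with $\sum_k d^2(x_k^i,x_k^j)=\dcs^2(\gamma_i,\gamma_j)$ for all six pairs at once, which is a nontrivial simultaneous-optimality requirement.

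To handle this I would instead mimic the classical proof that Alexandrov bounds pass to $\ell^2$-products (and to $L^2$-Wasserstein-type spaces over Alexandrov spaces): one does not need a single labeling for all six pairs, but can split the quadruple inequality into the distances $d^2(\gamma_0,\gamma_i)$ (three terms, involving the ``center'' $\gamma_0$) and $d^2(\gamma_i,\gamma_j)$ for $i,j\in\{1,2,3\}$. Choose one labeling making $\gamma_0\to\gamma_1$, $\gamma_0\to\gamma_2$, $\gamma_0\to\gamma_3$ simultaneously realized — which \emph{is} possible by fixing a labeling of $\gamma_0$ and independently permuting each $\gamma_i$ — so the three left-hand terms are exact sums $\sum_k d^2(x_k^0,x_k^i)$. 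For the right-hand side we only need an \emph{upper} bound on $\dcs^2(\gamma_i,\gamma_j)$ in the $K'=0$ case (inequality points the right way) and a \emph{lower} bound structure in the $\cosh$ case; but $\dcs^2(\gamma_i,\gamma_j)\le\sum_k d^2(x_k^i,x_k^j)$ always holds since any index-wise matching is a coupling. In the $K=0$ case this immediately gives $\sum_i\dcs^2(\gamma_0,\gamma_i)=\sum_{i,k}d^2(x_k^0,x_k^i)\ge\frac16\sum_{i,j,k}d^2(x_k^i,x_k^j)\ge\frac16\sum_{i,j}\dcs^2(\gamma_i,\gamma_j)$, using the pointwise Alexandrov inequality and then the coupling bound. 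For $K<0$ the same strategy works after replacing sums by the exponential/$\cosh$ bookkeeping: $\cosh(\sqrt{|K|}\,\dcs(\gamma_i,\gamma_j))\le\prod_k\cosh(\sqrt{|K|}\,d(x_k^i,x_k^j))$ is false in general, so here the $\ell^2$-product argument must be done more carefully — this is the main obstacle — and I expect to invoke the known fact (e.g.\ from \cite{BBI} or the product theory of Alexandrov spaces) that the $\cosh$-type inequality is preserved under the operation $\gamma\mapsto(\text{$\ell^2$-combination of copies})$ precisely because $\cosh(\sqrt{\sum a_k^2})\le\prod\cosh(a_k)$ is \emph{true}, together with monotonicity to pass from $\sum_k d^2(x_k^i,x_k^j)$ down to $\dcs^2(\gamma_i,\gamma_j)$ on the side where the inequality is favorable. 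Finally one checks completeness and the length-space property of $(\cs_\sigma,\dcs)$ — completeness as a closed subset argument, and the length property from Corollary~\ref{cor:geo} — so that Definition~\ref{def:Alexcurv} genuinely applies.
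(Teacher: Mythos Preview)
Your approach is essentially the paper's: fix a labeling making the three ``radial'' matchings $\gamma_0\to\gamma_i$ optimal, use the trivial coupling bound $\dcs^2(\gamma_i,\gamma_j)\le\sum_k d^2(x_k^i,x_k^j)$ for the other three pairs, apply the quadruple inequality on a product, and use monotonicity to conclude. Your $K'=0$ computation is exactly right, and your observation that one does \emph{not} need simultaneous optimality of all six matchings is the key structural point.

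For $K<0$ you overcomplicate the combination step. There is no need to tensorize per-$k$ inequalities via $\cosh$ algebra or to invoke general Alexandrov product theory: since $K\le0$, the Riemannian product $M^N$ itself has sectional curvature $\ge K$ (planes spanning two different factors have curvature $0\ge K$), so the quadruple inequality \eqref{eq:Alexdef} holds directly on $(M^N,d_N)$ with $d_N^2(\xx,\yy)=\sum_{n\le N}d^2(x_n,y_n)$. Apply it to the truncated quadruple $\xx^0,\dots,\xx^3\in M^N$, let $N\to\infty$ (the three radial terms converge to $\cosh\big(\sqrt{|K|}\,\dcs(\gamma_0,\gamma_i)\big)$ by the exact labeling), and on the right-hand side replace each $\big(\sum_n d^2(x_n^i,x_n^j)\big)^{1/2}$ by the smaller $\dcs(\gamma_i,\gamma_j)$ using only that $\cosh$ is increasing on $[0,\infty)$. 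That is precisely the paper's proof. Incidentally, the inequality you flag as ``false in general'' is in fact true---it follows from the concavity, hence subadditivity with value $0$ at $0$, of $r\mapsto\log\cosh\sqrt r$---but it is never needed.
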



\begin{proof}
  We will only consider the case $K<0$, the case $K=0$ follows by
  similar arguments or alternatively can be obtained from this by
  letting $K\nearrow0$. Obviously the case $K>0$ is reduced
  immediately to $K=0$. We will verify the quadruple comparison
  inequality. So let $\gamma_0,\gamma_1,\gamma_2,\gamma_3\in\cs$ such
  that $d_\cs(\gamma_0,\gamma_i)<\infty$ for $i=1,2,3$ (and hence also
  $d_\cs(\gamma_i,\gamma_j)<\infty$). In particular, we have
  $\gamma_i(M)=C$ for all $i=0,1,2,3$ and some
  $C\in\N\cup\{+\infty\}$. We will assume $C=+\infty$, the case
  $C<\infty$ follows from the same arguments and is simpler. Using
  Lemma \ref{lem:existence of matching} we can choose labelings
  $\gamma_i=\sum_n \delta_{x^i_n}$ for $i=0,1,2,3$ such that
  \begin{align}\label{eq:d1}
    d^2_\cs(\gamma_0,\gamma_i) ~=~ \sum\limits_{n=1}^\infty d^2\big(x^0_n,x^i_n\big) ~<~ \infty\;.
  \end{align}
  Further we can estimate for $i,j=1,2,3$:
  \begin{align}\label{eq:d2}
    d^2_\cs(\gamma_i,\gamma_j) ~\leq~ \sum\limits_{n=1}^\infty d^2\big(x^i_{n},x^j_{n}\big) ~<\infty~\;,
  \end{align}
  where finiteness follows from the triangle inequality in $(M,d)$ and
  \eqref{eq:d1}. Using the fact that for any $N\in\N$ the product
  manifold $M^N$ with Riemannian distance
  $d^2_N\big((x_1,\cdots,x_N),(y_1,\cdots,y_N)\big) = \sum_{n=1}^N
  d^2(x_n,y_n)$ has sectional curvature bounded below by $K$ and thus
  satisfies quadruple comparison, we get setting $\lambda=\sqrt{\abs{K}}$:
  \begin{align*}
    \left(\sum_{i=1}^3 \cosh\big(\lambda d_\cs(\gamma_0,\gamma_i)\big)\right)^2
    ~&=~ \lim\limits_{N\to\infty} \left(\sum_{i=1}^3 \cosh\Big(\lambda \sqrt{\sum_{n=1}^Nd^2(x^0_{n},x^i_{n})}\Big)\right)\\
    ~&\geq~ \lim\limits_{N\to\infty} \sum_{i,j=1}^3 \cosh\Big(\lambda \sqrt{\sum_{n=1}^Nd^2(x^i_{n},x^j_{n})}\Big)\\
    ~&=~ \sum_{i,j=1}^3 \cosh\Big(\lambda \sqrt{\sum_{n=1}^\infty d^2(x^i_{n},x^j_{n})}\Big)\\
    ~&\geq~ \sum_{i,j=1}^3 \cosh\big(\lambda d_\cs(\gamma_i,\gamma_j)\big)\;,
  \end{align*}
  where the last inequality follows from \eqref{eq:d2} and the fact
  that $\cosh$ is increasing. This finishes the proof.
\end{proof}

\subsection{Bochner inequality on configuration space}
\label{sec:bochner}

Starting from this section we will be concerned with lower bounds on
the Ricci curvature. Let us recall the Bochner--Weitzenb\"ock identity
which asserts that for every smooth function $u:M\to\R$ on the
Riemannian manifold $M$ we have:
\begin{align*}
  \frac12\Delta\abs{\nabla u}^2 -\ip{\nabla u,\nabla\Delta u} = \norm{\Hess u}^2_{HS} + \Ric[\nabla u,\nabla u]\;,
\end{align*}
where $\norm{\cdot}_{HS}$ denotes the Hilbert--Schmidt norm and $\Ric$
denotes the Ricci tensor. Thus a lower bound on the Ricci curvature in
the form $\Ric[\nabla u,\nabla u]\geq K\abs{\nabla u}^2$ is seen to be
equivalent to the \emph{Bochner inequality}
\begin{align*}
  \frac12\Delta\abs{\nabla u}^2 -\ip{\nabla u,\nabla\Delta u} \geq K \abs{\nabla u}^2\;.
\end{align*}

It will be convenient to introduce the \emph{carr\'e du champ
  operators}, defined for smooth functions $\phi,\psi:M\to\R$ via
\begin{align*}
  \Gamma(\phi,\psi)   ~&:=~ \frac12 \left[ \Delta \big(\phi\psi\big) - \phi \Delta\psi -\psi \Delta\phi \right]  ~=~ \ip{\nabla\phi,\nabla\psi}\;,\\
  \Gamma_2(\phi,\psi) ~&:=~ \frac12 \left[ \Delta \Gamma(\phi,\psi) - \Gamma(\phi,\Delta\psi) \Gamma(\psi,\Delta\phi) \right]\;.
\end{align*}
In particular, writing $\Gamma(\phi)=\Gamma(\phi,\phi)$ and
$\Gamma_2(\phi)=\Gamma_2(\phi,\phi)$ we see $\Gamma_2(\phi)= \frac12
\Delta\abs{\nabla \phi}^2 -\ip{\nabla\phi,\nabla\Delta\phi}$.  Thus
the Bochner inequality takes the form
\begin{align*}
  \Gamma_2(\phi) ~\geq~ K\ \Gamma(\phi)\;.
\end{align*}
The latter inequality has been used extensively in the study of
general Markov semigroups and diffusions, originating in the work of
{\sc Bakry--\'Emery} \cite{BE85}, where $\Delta$ is replaced by the
generator of the semigroup.

The aim of this section is to prove the natural analogue of Bochner's
inequality on the configuration space. For smooth cylinder functions
$F,G\in\cyl$ we define
\begin{align*}
  \Gamma^\cs (F,G) ~&:=~ \frac12\left[\Delta^\cs(FG)- F\Delta^\cs G -G\Delta^\cs F\right]~=~ \ip{\nabla^\cs F,\nabla^\cs G}\;,\\
  \Gamma^\cs_2(F,G) ~&:=~ \frac12 \left[\Delta^\cs \Gamma^\cs(F,G) - \Gamma^\cs(F,\Delta^\cs G) - \Gamma^\cs(G,\Delta^\cs F)\right]\;.
\end{align*}
Note that $\Gamma^\cs$ coincides with the carr\'e du champ operator of
the Dirichlet form $\cE$ introduced in Section \ref{sec:diff}.

\begin{proposition}\label{prop:Bochner-cylinder}
  Assume that $M$ has Ricci curvature bounded below
  by $K$. Then any cylinder function $F\in\cyl$
  satisfies the following Bochner inequality:
  \begin{align}\label{eq:Bochner-cylinder}
    \Gamma_2^\cs (F)(\gamma) ~\geq K\ \Gamma^\cs (F)(\gamma)
    \qquad\forall \gamma\in\cs\;.    
  \end{align}
\end{proposition}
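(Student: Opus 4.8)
The plan is to reduce Bochner's inequality on $\cs$ to Bochner's inequality on a finite product $M^N$, exploiting the ``local'' nature of the configuration-space operators on cylinder functions. First I would fix a cylinder function $F(\gamma)=g_F\big(\gamma(\phi_1),\dots,\gamma(\phi_n)\big)$ and a configuration $\gamma=\sum_i\delta_{x_i}$. The key observation is that $\nabla^\cs F$, $\Delta^\cs F$, and hence $\Gamma^\cs(F)$ and $\Gamma_2^\cs(F)$, evaluated at $\gamma$, depend only on finitely many of the points $x_i$ --- namely those lying in $\bigcup_k\supp\phi_k$, which is compact, so there are only finitely many, say $x_1,\dots,x_N$. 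I would make this precise by introducing the symmetric function $u:M^N\to\R$, $u(y_1,\dots,y_N):=g_F\big(\sum_j\phi_1(y_j)+c_1,\dots,\sum_j\phi_n(y_j)+c_n\big)$, where $c_k=\gamma(\phi_k)-\sum_{j\le N}\phi_k(x_j)=0$ by the choice of $N$ (the remaining points contribute nothing). One then checks from the definitions in Section~\ref{sec:diff} that
\begin{align*}
  \nabla^\cs F(\gamma;x_j)~=~\nabla_{y_j} u(x_1,\dots,x_N)\;,\qquad
  \Delta^\cs F(\gamma)~=~\Delta_{M^N} u(x_1,\dots,x_N)\;,
\end{align*}
where $M^N$ carries the product Riemannian metric, and consequently
\begin{align*}
  \Gamma^\cs(F)(\gamma)~=~\Gamma^{M^N}(u)(\bx)\;,\qquad
  \Gamma_2^\cs(F)(\gamma)~=~\Gamma_2^{M^N}(u)(\bx)\;,
\end{align*}
with $\bx=(x_1,\dots,x_N)$. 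This is really just the statement that the $\cs$-calculus restricted to a cylinder function is the ordinary calculus on a finite symmetric power of $M$.

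Once this identification is in place, the proof concludes in one line: since $M$ has $\Ric\ge K$, the product $M^N$ also has $\Ric\ge K$ (the Ricci tensor of a product is block-diagonal with the factors' Ricci tensors), so the classical Bochner--Weitzenb\"ock inequality on $M^N$ gives $\Gamma_2^{M^N}(u)\ge K\,\Gamma^{M^N}(u)$ at every point, in particular at $\bx$. Transporting this back through the identification yields $\Gamma_2^\cs(F)(\gamma)\ge K\,\Gamma^\cs(F)(\gamma)$, and since $\gamma$ was arbitrary this is \eqref{eq:Bochner-cylinder}.

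The main obstacle is the bookkeeping in verifying the identification of the operators, especially for $\Delta^\cs$ and $\Gamma_2^\cs$. For the Laplacian one must unwind $\Delta^\cs=\mbox{div}^\cs\nabla^\cs$ from its definition on sums $\sum_i F_i\cdot V_i$: writing $\nabla^\cs F$ in the form $\sum_i \partial_i g_F(\gamma(\phi))\,\nabla\phi_i$ and applying the divergence formula, one picks up terms $\nabla^\cs_{\nabla\phi_i}\big(\partial_i g_F\big)$ plus $\partial_i g_F\cdot\gamma(\Delta^M\phi_i)$, and must check these reassemble exactly into $\Delta_{M^N}u$; the cross terms $\partial_{ik}g_F\,\ip{\nabla\phi_i,\nabla\phi_k}$ must match the off-diagonal second derivatives of $u$, and the $\gamma(\Delta^M\phi_i)$ pieces must match the sum of the factorwise Laplacians. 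A clean way to organize this is to note that both sides are second-order differential operators in $\gamma$ with the same symbol and drift, which one reads off from the directional-derivative characterization of $\nabla^\cs$. For $\Gamma_2^\cs$ one then simply expands the defining combination $\tfrac12[\Delta^\cs\Gamma^\cs(F)-2\Gamma^\cs(F,\Delta^\cs F)]$ and uses that $\Gamma^\cs(F)$ is again (the restriction of) a smooth function on $M^N$ --- here one should be a little careful that $\Gamma^\cs(F)$, although not literally a cylinder function of the original form, is still computed by the same $M^N$-localization, so that $\Delta^\cs$ applied to it is $\Delta_{M^N}$ applied to the corresponding function on $M^N$. Modulo this (routine but slightly tedious) verification, the proposition is immediate.
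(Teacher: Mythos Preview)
Your proposal is correct and follows essentially the same route as the paper: localize to the finitely many points of $\gamma$ lying in the union of the supports of the $\phi_k$, identify the $\cs$-operators acting on $F$ at $\gamma$ with the corresponding $M^N$-operators acting on the function $u(\yy)=g_F\big(\sum_j\phi_1(y_j),\dots,\sum_j\phi_n(y_j)\big)$ at $\bx=(x_1,\dots,x_N)$, and invoke Bochner on $M^N$. The paper's $g(\psi)$ is exactly your $u$, and its Lemma~\ref{lem:Gamma2-tensor-chain} is precisely the bookkeeping you flag as the main obstacle --- the chain rule for $\Gamma_2$ plus the tensorization identities --- worked out explicitly rather than argued abstractly.

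One organizational difference worth noting: the paper sidesteps your concern about $\Gamma^\cs(F)$ not being a cylinder function of the original form by never applying $\Delta^\cs$ to it directly. Instead it expands $\Gamma_2^\cs(F)$ from scratch via the chain rule (the long displayed formula with three sums), and only \emph{then} recognizes the result as $\Gamma_2^{(N)}(g(\psi))(\bx)$ via the tensorization lemma. Your alternative --- observing that $\Gamma^\cs(F)$ is still the pullback of a smooth function on $M^N$ under the same labeling map, so $\Delta^\cs$ acts on it as $\Delta_{M^N}$ --- is also valid and arguably more conceptual, but requires being precise about exactly which class of functions the localization applies to.
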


\begin{proof}
  The cylinder function $F$ takes the form
  $F(\gamma)=g\big(\ip{\phi_1,\gamma},\dots,\ip{\phi_n,\gamma}\big)$,
  where $g\in C^\infty(\R^n)$ and $\phi_i\in C^\infty_c(M)$ for
  $i=1,\dots, n$. From the definition of gradient and divergence on
  $\cs$ a direct calculation yields:
  \begin{align*}
    \Gamma^\cs (F)(\gamma) ~&=~\sum\limits_{i,j}g_i(\phi) g_j(\phi)\ip{\nabla\phi_i,\nabla\phi_j}_\gamma
                         ~=~ \sum\limits_{i,j}g_i(\phi) g_j(\phi) \ip{\Gamma(\phi_i,\phi_j),\gamma}\;,
  \end{align*}
  where we write $g_i=\partial_ig$. Moreover, we obtain
  \begin{align*}
    &\Gamma_2^\cs(F)(\gamma)\\
    ~&=~\sum\limits_{i,j} g_i(\phi)g_j(\phi)\ip{\frac12\Delta\ip{\nabla \phi_i,\nabla\phi_j} - \ip{\nabla \phi_i,\nabla\Delta \phi_j},\gamma}\\
    &\qquad + \sum\limits_{i,j,k,l}g_{ik}(\phi)g_{jl}(\phi)\ip{\nabla\phi_i,\nabla\phi_j}_\gamma\ip{\nabla\phi_k,\nabla\phi_l}_\gamma\\
    &\qquad +
    \sum\limits_{i,j,k}g_{i}(\phi)g_{jk}(\phi)\Big[2\ip{\nabla\ip{\nabla\phi_i,\nabla\phi_k},\nabla\phi_j}_\gamma
    -
    \ip{\nabla\ip{\nabla\phi_j,\nabla\phi_k},\nabla\phi_i}_\gamma\Big]\\
     &=~ \sum\limits_{i,j} g_i(\phi)g_j(\phi)\ip{\Gamma_2(\phi_i,\phi_j),\gamma} 
        + \sum\limits_{i,j,k,l}g_{ik}(\phi)g_{jl}(\phi)\ip{\Gamma(\phi_i,\phi_j),\gamma}\ip{\Gamma(\phi_k,\phi_l),\gamma}\\
         &\qquad+ \sum\limits_{i,j,k}g_{i}(\phi)g_{jk}(\phi)\Big[\ip{2\Gamma\big(\phi_j,\Gamma(\phi_i,\phi_k)\big),\gamma}
                                               -\ip{\Gamma\big(\phi_i,\Gamma(\phi_j,\phi_k)\big),\gamma}\Big]\;.
  \end{align*}
  Choose a compact set $K$ containing all the supports of $\phi_i$ for
  $i=1,\dots,n$. Fix a configuration $\gamma$, let $N=\gamma(K)$ and
  write $\gamma|_K=\sum_{\a=1}^N\delta_{x_\a}$. Define functions
  $\psi_i:M^N\to\R$ via
  $\psi_i(y_1,\cdots,y_N)=\sum_{\a=1}^N\phi_i(y_\a)=\ip{\phi_i,\gamma}$. By
  the tensorization property \eqref{eq:Gamma-tensor} and the chain
  rule \eqref{eq:Gamma2-chain} of the carr\'e du champ operators given
  by Lemma \ref{lem:Gamma2-tensor-chain} below we obtain for
  $\xx=(x_1,\cdots,x_N)$:
  \begin{align*}
    \Gamma_2^\cs(F)(\gamma)
    ~&=~ \sum\limits_{i,j} g_i(\psi)g_j(\psi)\Gamma^{(N)}_2(\psi_i,\psi_j)(\xx)\\ 
        &+ \sum\limits_{i,j,k,l}g_{ik}(\psi)g_{jl}(\psi)\Gamma^{(N)}(\psi_i,\psi_j)(\xx)\Gamma^{(N)}(\psi_k,\psi_l)(\xx)\\
         &+ \sum\limits_{i,j,k}g_{i}(\psi)g_{jk}(\psi)\Big[2\Gamma^{(N)}\big(\psi_j,\Gamma^{(N)}(\psi_i,\psi_k)\big)(\xx)\\
                                               &\qquad\qquad-\Gamma^{(N)}\big(\psi_i,\Gamma^{(N)}(\psi_j,\psi_k)\big)(\xx)\Big]\\
      &=~ \Gamma_2^{(N)}(g(\psi))(\xx)\;.
  \end{align*}
  Applying Bochner's inequality on $M^N$, which has Ricci curvature
  bounded below by $K$ as well, and using \eqref{eq:Gamma-tensor},
  \eqref{eq:Gamma-chain} we get:
  \begin{align*}
    \Gamma_2^\cs(F)(\gamma) ~&=~ \Gamma_2^{(N)}(g(\psi))(\xx)
     ~\geq~ K\ \Gamma^{(N)}(g(\psi))(\xx)\\ 
    ~&=~ \sum\limits_{i,j} g_i(\phi)g_j(\phi)\ip{\Gamma(\phi_i,\phi_j),\gamma}
    ~=~ 
    K\ \Gamma^{\cs}(F)(\gamma)\;,
  \end{align*}
  which finishes the proof.
\end{proof}

The following lemma summarizes tensorization properties and a chain
rule for the carr\'e du champ operators which are readily verified by
direct computations.

\begin{lemma}\label{lem:Gamma2-tensor-chain}
  Let $M$ be a smooth Riemannian manifold. Let $g\in C^\infty(\R^n)$
  and $\psi_i\in C^\infty_c(M)$ for $i=1,\dots,n$ and write
  $\psi=\big(\psi_1,\cdots,\psi_n\big)\in C^\infty_c(M,\R^n)$. Then we
  have:
  \begin{align}\label{eq:Gamma-chain}
    \Gamma\big(g(\psi)\big) ~&=~ \sum\limits_{i,j=1}^n g_i(\psi) g_j(\psi) \Gamma(\psi_i,\psi_j)\;,\\\nonumber
    \Gamma_2\big(g(\psi)\big) ~&=~ \sum\limits_{i,j=1}^n g_i(\psi) g_j(\psi) \Gamma_2(\psi_i,\psi_j)
                                + \sum\limits_{i,j,k,l=1}^n g_{ik}(\psi)g_{jl}(\psi) \Gamma(\psi_i,\psi_j)\Gamma(\psi_k,\psi_l)\\\label{eq:Gamma2-chain}
                              & + \sum\limits_{i,j,k=1}^ng_i(\psi) g_{jk}(\psi)
                              \left[2\Gamma\big(\psi_j,\Gamma(\psi_i,\psi_k)\big) - \Gamma\big(\psi_i,\Gamma(\psi_j,\psi_k)\big)\right]\;.
  \end{align}
  Moreover, for $N\in\N$ let $M^N$ be the $N$-fold tensor product of
  the Riemannian manifold $M$ and denote by
  $\Gamma^{(N)},\Gamma_2^{(N)}$ the carr\'e du champ operators
  associated to the Laplace--Beltrami operator on $M^N$. Let
  $\psi:M^N\to\R$ be given for $\xx=(x_1,\cdots,x_N)$ by
  $\psi(\xx)=\sum_{\a=1}^N\phi(x_\a)$ for a function $\phi\in
  C^\infty_c(M)$. Then we have:
  \begin{align}\label{eq:Gamma-tensor}
    \Gamma^{(N)}(\psi) (\xx) ~=~ \sum\limits_{\a=1}^N \Gamma(\phi)(x_\a)\;,\qquad
     \Gamma_2^{(N)}(\psi)(\xx) ~=~ \sum\limits_{\a=1}^N \Gamma_2(\phi)(x_\a)\;.
  \end{align}
\end{lemma}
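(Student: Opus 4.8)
The plan is to verify every identity by a direct computation, working throughout with the Bochner--Weitzenb\"ock representation $\Gamma_2(\phi,\psi)=\ip{\Hess\phi,\Hess\psi}_{HS}+\Ric[\nabla\phi,\nabla\psi]$ (the polarization of the identity recalled just above Proposition~\ref{prop:Bochner-cylinder}) rather than the purely algebraic definition of $\Gamma_2$, since the differential-geometric form keeps the index bookkeeping transparent.

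The chain rule \eqref{eq:Gamma-chain} for $\Gamma$ is immediate: since $\nabla\big(g(\psi)\big)=\sum_i g_i(\psi)\,\nabla\psi_i$, bilinearity of $\ip{\cdot,\cdot}$ gives $\Gamma(g(\psi))=\sum_{i,j}g_i(\psi)g_j(\psi)\,\Gamma(\psi_i,\psi_j)$. For \eqref{eq:Gamma2-chain} I would differentiate once more to obtain $\Hess\big(g(\psi)\big)=\sum_i g_i(\psi)\Hess\psi_i+\sum_{i,j}g_{ij}(\psi)\,\nabla\psi_i\otimes\nabla\psi_j$, insert this into the polarized Bochner identity for $g(\psi)$, and expand the Hilbert--Schmidt square. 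This produces exactly three groups of terms: the ``second order $\times$ second order'' group collects into $\sum_{i,j}g_i g_j\,\Gamma_2(\psi_i,\psi_j)$ (using the polarized Bochner identity once more together with $\Ric[\nabla g(\psi)]=\sum_{i,j}g_ig_j\Ric[\nabla\psi_i,\nabla\psi_j]$); the ``rank one $\times$ rank one'' group, via $\ip{\nabla\psi_i\otimes\nabla\psi_j,\nabla\psi_k\otimes\nabla\psi_l}_{HS}=\Gamma(\psi_i,\psi_k)\Gamma(\psi_j,\psi_l)$ and a relabeling of indices, becomes $\sum_{i,j,k,l}g_{ik}g_{jl}\,\Gamma(\psi_i,\psi_j)\Gamma(\psi_k,\psi_l)$; and the remaining cross group is $2\sum_{i,j,k}g_i g_{jk}\,\Hess\psi_i(\nabla\psi_j,\nabla\psi_k)$.

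The one step requiring care is identifying this cross group with the bracketed term in \eqref{eq:Gamma2-chain}. Here metric compatibility gives $\Gamma\big(\psi_c,\Gamma(\psi_a,\psi_b)\big)=\ip{\nabla\psi_c,\nabla\ip{\nabla\psi_a,\nabla\psi_b}}=\Hess\psi_a(\nabla\psi_c,\nabla\psi_b)+\Hess\psi_b(\nabla\psi_c,\nabla\psi_a)$, so that $2\Gamma(\psi_j,\Gamma(\psi_i,\psi_k))-\Gamma(\psi_i,\Gamma(\psi_j,\psi_k))$ equals $2\Hess\psi_i(\nabla\psi_j,\nabla\psi_k)$ plus a remainder that is \emph{antisymmetric} in the pair $(j,k)$; since this is contracted against the symmetric coefficient $g_{jk}$ the remainder drops out, yielding $\sum_{i,j,k}g_i g_{jk}\big[2\Gamma(\psi_j,\Gamma(\psi_i,\psi_k))-\Gamma(\psi_i,\Gamma(\psi_j,\psi_k))\big]=2\sum_{i,j,k}g_i g_{jk}\Hess\psi_i(\nabla\psi_j,\nabla\psi_k)$ and completing \eqref{eq:Gamma2-chain}. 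I expect this antisymmetry cancellation to be the only genuinely non-mechanical point.

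Finally, the tensorization \eqref{eq:Gamma-tensor} follows from the product structure of $M^N$: the Levi--Civita connection, the Laplacian and the Ricci tensor of the product metric are all block diagonal. For $\psi(\xx)=\sum_\a\phi(x_\a)$ one has $\nabla^{(N)}\psi=\big(\nabla\phi(x_1),\dots,\nabla\phi(x_N)\big)$, hence $\Gamma^{(N)}(\psi)(\xx)=\abs{\nabla^{(N)}\psi}^2=\sum_\a\Gamma(\phi)(x_\a)$; moreover the mixed second derivatives of $\psi$ vanish and the Christoffel symbols do not couple distinct factors, so $\Hess^{(N)}\psi$ is block diagonal with $\a$-th block $\Hess\phi(x_\a)$, giving $\norm{\Hess^{(N)}\psi}^2_{HS}=\sum_\a\norm{\Hess\phi(x_\a)}^2_{HS}$ and $\Ric^{(N)}[\nabla^{(N)}\psi]=\sum_\a\Ric[\nabla\phi(x_\a)]$. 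Inserting these into the Bochner identity on $M^N$ yields $\Gamma^{(N)}_2(\psi)(\xx)=\sum_\a\Gamma_2(\phi)(x_\a)$. No obstacle is expected in this part.
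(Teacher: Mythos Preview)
Your argument is correct. The paper does not actually supply a proof of this lemma; it merely states that the identities ``are readily verified by direct computations,'' so there is nothing to compare your method against. Your choice to work through the Bochner--Weitzenb\"ock form $\Gamma_2(\phi)=\norm{\Hess\phi}^2_{HS}+\Ric[\nabla\phi,\nabla\phi]$ is a perfectly legitimate realization of that direct computation, and your handling of the cross term---observing that $2\Gamma(\psi_j,\Gamma(\psi_i,\psi_k))-\Gamma(\psi_i,\Gamma(\psi_j,\psi_k))$ equals $2\Hess\psi_i(\nabla\psi_j,\nabla\psi_k)$ plus a $(j,k)$-antisymmetric remainder killed by the symmetric coefficient $g_{jk}$---is the clean way to close that step. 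The tensorization part is likewise correct and routine.
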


More generally we have the following weak form of Bochner's inequality.

\begin{proposition}\label{prop:Bochner}
  Assume that $\Ric_M\geq K$. Then for all non-negative $G\in
  D(\Delta^\cs)$ with $G,\abs{\nabla^\cs G},\Delta^\cs G\in L^\infty(\cs,\pi)$ and all
  $F\in D(\Delta^\cs)$ we have:
  \begin{align}\label{eq:Bochner-weak}
    \int \frac12\Delta^\cs G \abs{\nabla^\cs F}^2  + G(\Delta^\cs F)^2 + \Delta^\cs F\ip{\nabla^\cs G,\nabla^\cs F}\dd \pi
    ~\geq~ 
   K \int G\abs{\nabla^\cs F}^2\dd\pi\;.
  \end{align}
\end{proposition}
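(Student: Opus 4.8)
The plan is to obtain \eqref{eq:Bochner-weak} from the pointwise Bochner inequality \eqref{eq:Bochner-cylinder} of Proposition \ref{prop:Bochner-cylinder}: first prove it for smooth cylinder functions, then remove the restrictions by an approximation in which the hypotheses on $G$ play a decisive role. For $F,G\in\cyl$ with $G\geq0$, I would multiply \eqref{eq:Bochner-cylinder} by $G$ and integrate against $\pi$; the right-hand side becomes $K\int G\abs{\nabla^\cs F}^2\dd\pi$, so it only remains to rewrite $\int G\,\Gamma_2^\cs(F)\dd\pi$. Using $\Gamma_2^\cs(F)=\tfrac12\Delta^\cs\Gamma^\cs(F)-\Gamma^\cs(F,\Delta^\cs F)$, the self-adjointness of $\Delta^\cs$ (so that $\int G\,\Delta^\cs\Gamma^\cs(F)\dd\pi=\int\Delta^\cs G\,\Gamma^\cs(F)\dd\pi$), the duality of $\nabla^\cs$ and $\mbox{div}^\cs$, and the Leibniz rule $\mbox{div}^\cs(G\nabla^\cs F)=G\,\Delta^\cs F+\ip{\nabla^\cs G,\nabla^\cs F}$, one arrives at
\begin{align*}
  \int G\,\Gamma_2^\cs(F)\dd\pi~=~\int\Big(\tfrac12\Delta^\cs G\,\abs{\nabla^\cs F}^2+G(\Delta^\cs F)^2+\Delta^\cs F\ip{\nabla^\cs G,\nabla^\cs F}\Big)\dd\pi\;,
\end{align*}
which is exactly the left-hand side of \eqref{eq:Bochner-weak}. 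The only point to check here is that for $F,G\in\cyl$ the objects $\Gamma^\cs(F)$, $\Delta^\cs F$ and the vector field $G\nabla^\cs F$ are again of (polynomial) cylinder type, hence lie in every $L^p(\pi)$ and in the domain of $\Delta^\cs$ resp.\ $\mbox{div}^\cs$, so that the integrations by parts are legitimate; this is immediate from the formulas of Section \ref{sec:diff}.

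To pass to a general $G$ with $F\in\cyl$ still fixed, I would approximate $G$ by non-negative cylinder functions $G_n$ with $G_n\to G$, $\nabla^\cs G_n\to\nabla^\cs G$ and $\Delta^\cs G_n\to\Delta^\cs G$ in $L^2(\pi)$; since cylinder functions have all $\pi$-moments (the counting variables $\gamma(K)$ being Poisson distributed), $\abs{\nabla^\cs F}^2$, $(\Delta^\cs F)^2$ and $\Delta^\cs F\,\nabla^\cs F$ lie in $L^2(\pi)$, so every term of \eqref{eq:Bochner-weak} passes to the limit and \eqref{eq:Bochner-weak} holds for all $F\in\cyl$ and all admissible $G$. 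To then reach an arbitrary $F\in D(\Delta^\cs)$, I would pick cylinder functions $F_n\to F$ in the graph norm of $\Delta^\cs$; from $\cE(F_n-F)=-\int(F_n-F)\Delta^\cs(F_n-F)\dd\pi\to0$ one gets $\nabla^\cs F_n\to\nabla^\cs F$ in $L^2(\pi;T\cs)$, hence $\abs{\nabla^\cs F_n}^2\to\abs{\nabla^\cs F}^2$, $(\Delta^\cs F_n)^2\to(\Delta^\cs F)^2$ and $\Delta^\cs F_n\,\nabla^\cs F_n\to\Delta^\cs F\,\nabla^\cs F$ in $L^1(\pi)$. At this point the three bounds $G,\abs{\nabla^\cs G},\Delta^\cs G\in L^\infty$ are precisely what let each of the integrals $\int\Delta^\cs G\abs{\nabla^\cs F_n}^2\dd\pi$, $\int G(\Delta^\cs F_n)^2\dd\pi$, $\int\Delta^\cs F_n\ip{\nabla^\cs G,\nabla^\cs F_n}\dd\pi$ and $K\int G\abs{\nabla^\cs F_n}^2\dd\pi$ pass to the limit, giving \eqref{eq:Bochner-weak} in full generality.

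The computation in the cylinder case is routine once Proposition \ref{prop:Bochner-cylinder} is available, so the real work lies in the two approximation steps. The main obstacle is that one needs $\cyl$ to be a \emph{core for the generator} $\Delta^\cs$ — not merely a core for the Dirichlet form $\cE$ — in order to approximate $F\in D(\Delta^\cs)$ in the graph norm; this is where the explicit product representation of $T^\cs_t$ from Theorem \ref{thm:sg-welldef} enters. A secondary technical point is to arrange the cylinder approximation of $G$ to be non-negative (for instance by composing with a smooth function close to the identity and vanishing on $(-\infty,0)$) while still converging in the graph norm. Everything else is bounded-convergence bookkeeping, made possible by the fact that cylinder functions have all Poisson moments and that $G$ together with its first two ``derivatives'' is bounded.
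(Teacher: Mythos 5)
Your proof follows the same route as the paper: multiply the pointwise Bochner inequality \eqref{eq:Bochner-cylinder} by $G$, integrate, rewrite $\int G\,\Gamma^\cs_2(F)\dd\pi$ by integration by parts, and pass from cylinder $F$ to general $F\in D(\Delta^\cs)$ using the $L^\infty$ bounds on $G$, $\abs{\nabla^\cs G}$, $\Delta^\cs G$ to take limits; your algebra for the integration-by-parts identity is also correct and matches the paper's Leibniz step. Two remarks on the differences. First, the extra detour through cylinder $G$ is avoidable and not as innocent as you make it sound: restoring non-negativity of graph-norm approximants $G_n$ by post-composition with a smooth cutoff $\eta$ gives
$\Delta^\cs(\eta\circ G_n)=\eta'(G_n)\Delta^\cs G_n+\eta''(G_n)\Gamma^\cs(G_n)$,
and controlling the second term in $L^2$ is not a consequence of graph-norm convergence of $G_n$ alone. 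The paper sidesteps this by running the integration by parts directly with a general admissible $G$ and cylinder $F$; this works because for $F\in\cyl$ the quantities $\Gamma^\cs(F)$, $\Delta^\cs F$ and $\Gamma^\cs(F,\Delta^\cs F)$ are explicit polynomial cylinder expressions with all $\pi$-moments, so the stated hypotheses on $G$ alone make every duality pairing well-defined. Second, your concern about the operator-core property is well placed: the paper asserts without comment the existence of $(F_n)\subset\cyl$ with $F_n\to F$, $\abs{\nabla^\cs F_n}\to\abs{\nabla^\cs F}$ and $\Delta^\cs F_n\to\Delta^\cs F$ in $L^2(\pi)$, which does require $\cyl$ to be a core for the generator, not merely a form core for $\cE$. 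This should be supported by the essential self-adjointness of $\Delta^\cs$ on $\cyl$ (cf.\ \cite{AKR98}), but the paper itself does not spell this out, so flagging it is a genuine improvement rather than a gap in your argument.
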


\begin{proof}
  First let $F$ be a cylinder function. Multiplying
  \eqref{eq:Bochner-cylinder} by $G$ and integrating we obtain \eqref{eq:Bochner-weak} immediately by applying the Leibniz rule
  \begin{align*}
    G\ip{\nabla^\cs F,\Delta^\cs F} = \ip{\nabla^\cs F, \nabla^\cs(G\Delta^\cs
      F)}  - \Delta^\cs F\ip{\nabla^\cs G,\nabla^\cs F}
  \end{align*}
  and an integration by parts. For general $F\in
  D(\Delta^\cs)\subset\cF$ we argue by approximation. We can take a
  sequence $(F_n)\subset\cyl$ such that $F_n\to F$, $\abs{\nabla^\cs
    F_n}\to\abs{\nabla^\cs F}$ and $\Delta^\cs F_n\to\Delta^\cs F$ in
  $L^2(\cs,\pi)$. By the boundedness of $G,\abs{\nabla^\cs G}$ and
  $\Delta^\cs G$ we can pass to the limit in the integrals and obtain
  \eqref{eq:Bochner-weak}.
\end{proof}

\subsection{Gradient estimates on $\cs$}
\label{sec:gradest}

It is well known that the lower curvature bound $\Ric_M\geq K$ is
equivalent to the following gradient estimate for the heat semigroup
$T^M_t=\e^{t\Delta}$ on $M$, see
e.g. \cite[Thm. 1.3]{vRenesseSturm:gradientestimates} and the
discussion thereafter. For all smooth $f:M\to\R$, all $x\in M$ and $t>0$:
\begin{align}\label{eq:M-gradest}
  \Gamma\big( T_t^M f\big)(x) ~\leq~ \e^{-2Kt}T_t^M\Gamma\big(f\big)(x)\;.
\end{align}
The aim of this section is to show the gradient estimate for the heat
semigroup $T_t^\cs$ on the configuration space. Recall that the
Dirichlet form admits a carr\'e du champs operator $\Gamma^\cs$ such
that for all $u\in\cF$ we have $\Gamma^\cs(u)(\gamma)=\abs{ \nabla^\cs
  u}^2_\gamma$. We have the following

\begin{theorem}\label{thm:config-gradest}
  Assume that $\Ric_M\geq K$. Then for any function $F\in \cF$ and all
  $t>0$ we have:
  \begin{align}\label{eq:config-gradest}
    \Gamma^\cs \big(T^\cs_t F\big) ~\leq~ \e^{-2Kt} T^\cs_t\Gamma^\cs\big( F\big) \quad \pi\text{-a.e.}
  \end{align}
\end{theorem}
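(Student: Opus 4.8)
The plan is to exploit the explicit product representation of the heat semigroup from Theorem \ref{thm:sg-welldef}, which identifies $T^\cs_t F$ (for a $\pi$-version) with integration against the product kernel $p^\cs_t(\gamma,\cdot)$, together with the known gradient estimate \eqref{eq:M-gradest} on $M$ and its tensorizations to finite products $M^N$. The natural strategy is to first establish the estimate for smooth cylinder functions $F\in\cyl$, where everything reduces to a finite-dimensional product manifold, and then pass to general $F\in\cF$ by approximation, using lower semicontinuity of the Dirichlet form and the $L^2$-continuity of $T^\cs_t$.

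First I would treat the cylinder case. For $F(\gamma)=g(\ip{\phi_1,\gamma},\dots,\ip{\phi_n,\gamma})$ with $\phi_i\in C^\infty_c(M)$, fix a compact $K$ containing all $\supp\phi_i$. On the set of good configurations $\Theta$ one can write, for $\gamma$ with $\gamma(K)=N$ and $\gamma|_K=\sum_{\a=1}^N\delta_{x_\a}$, that $T^\cs_tF(\gamma)$ coincides with the action of the finite-dimensional heat semigroup $T^{M^N}_t$ applied to the function $G_N(\xx)=g(\psi_1(\xx),\dots,\psi_n(\xx))$, $\psi_i(\xx)=\sum_\a\phi_i(x_\a)$, evaluated at $\xx=(x_1,\dots,x_N)$; the factors corresponding to points outside $K$ contribute trivially since the semigroup on those coordinates does not move the value of $F$ in expectation. (This is exactly the mechanism behind the identity $\Gamma_2^\cs(F)(\gamma)=\Gamma_2^{(N)}(g(\psi))(\xx)$ used in the proof of Proposition \ref{prop:Bochner-cylinder}, and the same tensorization of $\Gamma^\cs$ identifies $\Gamma^\cs(T^\cs_tF)(\gamma)$ with $\Gamma^{(N)}(T^{M^N}_tG_N)(\xx)$.) Since $M^N$ has $\Ric\geq K$, the gradient estimate \eqref{eq:M-gradest} holds on $M^N$, giving
\begin{align*}
  \Gamma^{(N)}\big(T^{M^N}_tG_N\big)(\xx) ~\leq~ \e^{-2Kt}\, T^{M^N}_t\Gamma^{(N)}(G_N)(\xx)\;.
\end{align*}
Unwinding the right-hand side through the product structure again identifies $T^{M^N}_t\Gamma^{(N)}(G_N)(\xx)$ with $T^\cs_t\big(\Gamma^\cs(F)\big)(\gamma)$, because $\Gamma^\cs(F)$ is itself (a cylinder function hence) of the required form on $M^N$. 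This yields \eqref{eq:config-gradest} pointwise on $\Theta$, hence $\pi$-a.e.

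Then I would pass to general $F\in\cF$. Take cylinder functions $F_k\to F$ in $L^2(\cs,\pi)$ with $\cE(F_k)\to\cE(F)$, so that (along a subsequence) $\abs{\nabla^\cs F_k}\to\abs{\nabla^\cs F}$ in $L^2$. Both sides of \eqref{eq:config-gradest} are tested against an arbitrary non-negative $h\in L^\infty(\cs,\pi)$: the left side $\int \Gamma^\cs(T^\cs_tF_k)\,h\dd\pi$ behaves well because $T^\cs_t F_k\to T^\cs_t F$ in $\cF$ (the semigroup is contractive in the form domain, so $\nabla^\cs T^\cs_t F_k\to\nabla^\cs T^\cs_t F$ in $L^2$), and the right side converges since $T^\cs_t$ is self-adjoint, positivity-preserving and $\Gamma^\cs(F_k)\to\Gamma^\cs(F)$ in $L^1$. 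Taking limits preserves the inequality, and since $h\geq0$ was arbitrary we get \eqref{eq:config-gradest} $\pi$-a.e.

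The main obstacle is the bookkeeping in the cylinder step: one must argue carefully that integrating $F$ against the infinite product kernel $p^\cs_t(\gamma,\cdot)$ genuinely collapses to the finite-dimensional heat flow on $M^{\gamma(K)}$ — i.e. that the coordinates outside $K$ are irrelevant and the (random, $\gamma$-dependent) dimension $N=\gamma(K)$ causes no measurability issue — and that the same collapse applies simultaneously to $\Gamma^\cs(F)$, which is supported on a possibly larger compact set. This requires keeping $K$ large enough to contain the supports of $\phi_i$, $\nabla\phi_i$ and their pairwise products, and invoking Theorem \ref{thm:sg-welldef} together with the tensorization identities of Lemma \ref{lem:Gamma2-tensor-chain} to justify the reduction. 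Once that reduction is clean, the rest is a routine application of the finite-dimensional gradient estimate plus a standard closability/approximation argument.
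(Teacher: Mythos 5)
Your first step contains a genuine error. You claim that for a cylinder function $F(\gamma)=g(\ip{\phi_1,\gamma},\dots,\ip{\phi_n,\gamma})$ with $\supp\phi_i\subset K$, the semigroup $T^\cs_t F(\gamma)$ collapses to the finite-dimensional heat flow on $M^{N}$ with $N=\gamma(K)$, ``since the semigroup on those coordinates does not move the value of $F$ in expectation.'' This is false: $T^\cs_t$ is not local in time. Take the simplest cylinder function $F(\gamma)=\gamma(\phi)$ with $\phi\in C^\infty_c(M)$ supported in $K$. Then
\begin{align*}
T^\cs_t F(\gamma)~=~\sum_{i} T^M_t\phi(x_i)\;,
\end{align*}
and every $T^M_t\phi(x_i)$ is strictly positive because $p^M_t(x_i,\cdot)$ has full support, so points of $\gamma$ outside $K$ contribute to $T^\cs_tF(\gamma)$; they diffuse into $K$ with positive probability. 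The analogy with Proposition \ref{prop:Bochner-cylinder} does not carry over, because $\Gamma^\cs_2$ is a pointwise differential operator and genuinely sees only finitely many particles at fixed $\gamma$, whereas $T^\cs_t$ integrates over the whole configuration's future. So the putative identity $\Gamma^\cs(T^\cs_tF)(\gamma)=\Gamma^{(N)}(T^{M^N}_tG_N)(\xx)$ does not hold, and the finite-dimensional gradient estimate you invoke proves the wrong inequality.

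The paper avoids this by working directly on the infinite product $M^\N$. It first uses the one-coordinate gradient estimate $\abs{\nabla^iT^i_t\hat F}^2\leq \e^{-2Kt}T^i_t\abs{\nabla^i\hat F}^2$, then applies Jensen's inequality to push the commuting factor $T^{\check i}_t$ inside the square, yielding $\abs{\nabla^iT^\N_t\hat F}^2\leq \e^{-2Kt}T^\N_t\abs{\nabla^i\hat F}^2$, and finally sums over $i$ to get the estimate on $M^\N$. The Jensen step is exactly what replaces your invalid finite-dimensional reduction: it is how one handles the infinitely many ``inactive'' coordinates correctly. Moreover, the passage from the pointwise inequality on $M^\N$ back to a $\pi$-a.e.\ inequality on $\cs$ is not automatic; the paper establishes that $\tilde T^\cs_tF$ is $\dcs$-Lipschitz with local Lipschitz constant controlled by $\e^{-Kt}\sqrt{\tilde T^\cs_t\abs{\nabla^\cs F}^2}$ and then invokes Lemma \ref{lem:lip-gamma}, a step you pass over silently. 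Your closure argument for general $F\in\cF$ is essentially correct and in line with the paper, but it rests on the broken cylinder step.
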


The strategy we follow will be to use the explicit representation of
the semigroup $T_t^\cs$ as an infinite product of one-particle
semigroups and the tensorization property of the gradient
estimate. Before we give the proof we need to introduce some notation.

Recall that $\pi(\cs^{(\infty)})=1$. To a measurable function $F$ on
$\cs^{(\infty)}$ we associate $\hat F: M^\N\to\R$ via
\begin{align*}
  \hat F(\xx) := F\left(\sum_{i\geq 1}\delta_{x_i}\right),
  \quad \xx=(x_i)_{i\geq 1}\in M^\N\;,
\end{align*}
which is measurable with respect to the product $\sigma-$algebra on
$M^\N.$ Then, also the function $\hat F_{\xx}^i : M\to\R$ defined by
\begin{align*}
  \hat F^i_\xx (y):= F\left(\sum_{j\geq 1, j\neq i} \delta_{x_j} +
  \delta_y\right)
\end{align*}
is measurable. We say that $\hat F$ is differentiable in $\xx$
if for each $i\geq 1$ the gradient in the i-th direction
\begin{align*}
  \nabla^i \hat F(\xx):= \nabla \hat F^i_\xx(x_i)\;,
\end{align*}
exists. We say that $\hat F$ is differentiable with finite gradient if
additionally
\begin{align*}
  |\nabla^\N \hat F|^2(\xx):=\sum_{i\geq 1} |\nabla^i\hat
  F|_{x_i}^2(\xx)<\infty\;.
\end{align*}
Then, for every $F\in\cyl, \gamma\in\cs^{(\infty)}$ and $\xx\in
l^{-1}(\gamma)$ we have
\begin{align*}
  \Gamma^\cs(F)=|\nabla^\cs F|_\gamma^2=|\nabla^\N\hat F|^2(\xx)\;.
\end{align*}
We will put
\begin{align*}
  T^i_t\hat F(\xx) = T_t^M \hat F^i_\xx(x_i)\;,
\end{align*}
i.e.\ the action of the one-particle semigroup in the $i$-th
coordinate. With this notation we can express the semigroup $T^\N_t$
introduced in Section \ref{sec:heat-sg} as $T^\N_t=\Pi_{j\in\N}T^j_t$,
the iterated application of the one-particle semigroup in all
directions. For $i\in\N$ we will also put
\begin{align*}
  T^{\check i}_t = \prod\limits_{j\in \N, j\neq i}T^j_t\;.
\end{align*}

\begin{proof}[Proof of Theorem \ref{thm:config-gradest}]
  Let us first assume that $F\in\cyl$ and start by establishing a
  gradient estimate for $\hat F$. First note that by
  \eqref{eq:M-gradest} for any $i\in\N$ and $\xx\in M^\N$:
  \begin{align*}
    \abs{\nabla^iT^i_t\hat F}^2(\xx) ~=~ \abs{\nabla T^M_t\hat F^i_\xx}^2(x_i)
      ~\leq~ \e^{-2Kt}T^M_t\abs{\nabla\hat F_\xx^i}^2(x_i)
      ~=~   \e^{-2Kt}T^i_t\abs{\nabla^i\hat F}^2(\xx)\;.
  \end{align*}
  By Jensen's inequality this yields
  \begin{align*}
    \abs{\nabla^iT^\N_t\hat F}^2(\xx) 
       ~\leq~ T_t^{\check i}\abs{\nabla^iT^i_t\hat F}^2(\xx) ~\leq~ \e^{-2Kt}T_t^\N\abs{\nabla^i\hat F}^2(\xx)\;,
  \end{align*}
  and summing over $i$ we obtain
  \begin{align}\label{eq:ge1}
    \abs{\nabla^\N T^\N_t\hat F}^2(\xx)
    ~\leq~
    \e^{-2Kt}T_t^\N\abs{\nabla^\N \hat F}^2(\xx)
    ~ <~ \infty\;.
  \end{align}
  In particular $T_t^\N \hat F$ is differentiable with finite
  gradient. Note that the right hand side is also bounded above by a
  constant. We now want to pass from the estimate on $M^\N$ to an
  estimate on $\cs$. Note that for any good configuration
  $\gamma\in\Theta$ and $\xx\in l^{-1}(\gamma)$:
  \begin{align*}
    T_t^\N\abs{\nabla^\N \hat F}^2(\xx) ~=~ \tilde T_t^\cs\abs{\nabla^\cs F}^2(\gamma) ~=:~G(\gamma)\;.
  \end{align*}
  We claim that $\tilde T^\cs_tF$ is $d_\cs$-Lipschitz on $\Theta$ and
  that $\abs{D\tilde T_t^\cs F}\leq \e^{-2Kt}G$. By Lemma
  \ref{lem:lip-gamma} this will suffice to show
  \eqref{eq:config-gradest}. Indeed, consider $V\in\mathcal V_0(M)$
  and its flow $(\psi_t)_t$. Then we have
  \begin{align*}
    \abs{\tilde T^\cs_tF(\psi_1^*\gamma)-\tilde T^\cs_tF(\gamma)} 
     ~&\leq~ \left|\int_0^1 \frac{\dd}{\dd s}\tilde T^\cs_tF(\psi_s^*\gamma)\dd s\right|\\
     ~&=~ \left|\int_0^1\sum_i\ip{\nabla^iT^\N_t\hat F,V}(\psi_s^*\xx)\dd s\right|\\
     ~&\leq~ \e^{-Kt}\int_0^1 \sqrt{G(\psi_s^*\gamma)}\abs{V}_{\psi_s^*\gamma}\dd s\\
     ~&=~ d_\cs(\psi_1^*\gamma,\gamma)\e^{-Kt}\int_0^1 \sqrt{G(\psi_s^*\gamma)}\dd s\;.
   \end{align*}
   Thus $\tilde T^\cs_t F$ is Lipschitz by the boundedness of
   $G$. Arguing as in the proof of Lemma \ref{lem:lip-gamma} by
   letting $\abs{V}_\gamma\to0$ yields the claim by continuity of $G$.

   Now take $F\in\cF$. Then there is a sequence $(F_n)_{n\in\N}\subset
   \cyl$ such that $F_n\to F$ in $L^2(\cs,\pi)$ and
   $\cE(F-F_n)\to0$. Therefore, denoting by $\Lambda$ the measure
   $\Lambda(dx,d\gamma):=\gamma(dx)\pi(d\gamma)$, $\nabla^\cs F_n$ is
   a Cauchy sequence in $L^2(M\times\cs\to TM,\Lambda).$ Therefore,
   there is a limit, denoted by $\nabla^\cs F,$ such that
   $\cE(F)=\int\abs{\nabla^\cs F}^2\ \dd\pi.$ As $T_t^\cs$ is a
   contraction also $T_t^\cs F_n \to T_t^\cs F$,
   $T_t^\cs\abs{\nabla^\cs F_n}^2\to T_t^\cs\abs{\nabla^\cs F_n}^2$
   and $\nabla^\cs T^\cs_t F_n$ is a Cauchy sequence with some limit
   $G$, by \eqref{eq:config-gradest}. By lower semicontinuity of the
   carr\'e du champ operator (see e.g. \cite[(2.17)]{AGS12}) we have
   $\Gamma^\cs(T_t^\cs F)(\gamma)\leq |G|^2(\gamma)\ \pi$-a.e.. In the
   first part of the proof, we saw that \eqref{eq:config-gradest}
   holds for all $F_n$, i.e.\
  \begin{align*}
    \abs{\nabla^\cs T^\cs_t F_n}^2 \leq \e^{-2Kt}
    T^\cs_t(\abs{\nabla^\cs F_n}^2)\quad \pi\mbox{-a.e.}
  \end{align*}
  Extracting a subsequence, this yields
  \begin{align*}
    \abs{\nabla^\cs T^\cs_t F}^2(\gamma) \leq |G|^2(\gamma)\leq
    \e^{-2Kt} T^\cs_t(\abs{\nabla^\cs F}^2)(\gamma)\quad
    \pi\mbox{-a.e.}
  \end{align*}
\end{proof}

\begin{remark}\label{rem:Bochner-Gradest}
  Alternatively, the gradient estimate could have been derived from
  the Bochner inequality from the previous section. In fact, a
  classical interpolation argument due to {\sc Bakry--\'Emery} yields the
  equivalence of the $\Gamma_2$-inequality \eqref{eq:Bochner-cylinder}
  and the gradient estimate \eqref{eq:config-gradest}. The idea is to
  consider
  \begin{align*}
    \phi(s)=\e^{-2Ks}T_s^\cs\Gamma^\cs(T_{t-s}^\cs F)
  \end{align*}
  and note that
  $\phi'(s)=\e^{-2Ks}T_s^\cs\Big[\Gamma_2^\cs(T_{t-s}^\cs F) - K
  \Gamma^\cs(T_{t-s}^\cs)\Big]$. For a detailed proof in a general
  setting see e.g. \cite[Cor. 2.3]{AGS12}. However, in order to apply
  this in the present setting one would need to extend
  \eqref{eq:Bochner-cylinder} (in a weak form) to a larger class of
  functions.
\end{remark}

\subsection{Wasserstein contraction}
\label{sec:W2contract}

In \cite{vRenesseSturm:gradientestimates} it has been shown that a
lower bound on the Ricci curvature is also equivalent to expansion
bounds in Wasserstein distance for the heat kernel. More precisely,
\cite[Cor. 1.4]{vRenesseSturm:gradientestimates} states that
$\Ric_M\geq K$ if and only if 
\begin{align}\label{eq:WcontM}
  W_{p,d}(H^M_t\mu,H^M_t\nu) ~\leq~ \e^{-Kt}W_{p,d}(\mu,\nu) \quad \forall t>0,\  \mu,\nu\in\cP_p(M)\;.
\end{align}
Here $W_{p,d}$ denotes the $L^p$-Wasserstein distance built from the
Riemannian distance $d$ and $H^M_t\mu \in\cP(M)$ is the probability
measure defined by
\begin{align*}
 (H^M_t\mu)(A) = \int_A\int_M p^M_t(x,y)\dd\mu(x)\dd\vol(y)\qquad\forall A\in\cB(\cs)\;,
\end{align*}
where $p^M_t$ is the heat kernel on $M$.

Here we will show that the heat semigroup on the configuration space
has the corresponding expansion bound in Wasserstein distance provided
$\Ric_M\geq K$. Recall the set of good configurations $\Theta$ from
\eqref{good config}. A probability measure $\mu$ on $\cs$ is called
\emph{good} if it is concentrated on the good configurations, i.e.\
$\mu(\Theta)=1.$ We denote the set of all good probability measures by
$\cP_g(\cs).$ Note that in particular any measure absolutely
continuous w.r.t.\ $\pi$ and all Dirac measures
$\delta_\gamma\in\cP(\cs)$ with $\gamma\in\Theta$ are good.  Let $p^\cs_t$
be the semigroup of Markov kernels on $\Theta$ given by Theorem
\ref{thm:sg-welldef} (i.e.\ the transition probabilities of the
independent particle process). Given $\mu\in\cP_g(\cs)$ we define $H^\cs_t\mu$ via
\begin{align}\label{eq:dualsg-good}
  H^\cs_t\mu(A) = \int_\Theta p^\cs_t(\gamma,A)\dd\mu(\gamma)\;. 
\end{align}

\begin{theorem}
  \label{thm:W2contract}
  Assume that $\Ric_M\geq K$. Then for all $\mu,\nu\in \cP_g(\cs)$ we
  have:
  \begin{align}\label{eq:W2cont}
    W_{2,\dcs}(H^\cs_t\mu,H^\cs_t\nu)~\leq~ \e^{-Kt}W_{2,\dcs}(\mu,\nu)\quad\forall t>0\;.
  \end{align}
\end{theorem}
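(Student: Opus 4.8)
The plan is to lift the Wasserstein contraction from the base manifold $M$ to the configuration space by exploiting the product structure of the heat semigroup $p^\cs_t$ on good configurations established in Theorem \ref{thm:sg-welldef}, namely that $p^\cs_t(\gamma,\cdot)$ is the pushforward under the labeling map $l$ of the product measure $\bigotimes_i p^M_t(x_i,\cdot)$ where $\gamma = \sum_i\delta_{x_i}$. First I would reduce to the case $W_{2,\dcs}(\mu,\nu)<\infty$, since otherwise there is nothing to prove. By the gluing lemma it then suffices to prove the pointwise contraction
\begin{align}\label{eq:pointwise-contract}
  W_{2,\dcs}\big(p^\cs_t(\gamma,\cdot),p^\cs_t(\omega,\cdot)\big)~\leq~\e^{-Kt}\,\dcs(\gamma,\omega)
\end{align}
for all $\gamma,\omega\in\Theta$ with $\dcs(\gamma,\omega)<\infty$, and then integrate against an optimal coupling of $\mu$ and $\nu$: if $q\in\Opt(\mu,\nu)$ then $\int p^\cs_t(\gamma,\cdot)\otimes p^\cs_t(\omega,\cdot)\,\dd q(\gamma,\omega)$ is a coupling of $H^\cs_t\mu$ and $H^\cs_t\nu$ whose cost is at most $\e^{-2Kt}W_{2,\dcs}^2(\mu,\nu)$ by \eqref{eq:pointwise-contract} and convexity of the squared Wasserstein distance.

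For the pointwise estimate \eqref{eq:pointwise-contract}, I would use Lemma \ref{lem:existence of matching}: since $\dcs(\gamma,\omega)<\infty$ there are labelings $\gamma=\sum_i\delta_{x_i}$, $\omega=\sum_i\delta_{y_i}$ with $\dcs^2(\gamma,\omega)=\sum_i d^2(x_i,y_i)$. On $M$ the contraction \eqref{eq:WcontM} for $p=2$ gives, for each $i$, a coupling $\eta_i$ of $p^M_t(x_i,\cdot)$ and $p^M_t(y_i,\cdot)$ with $\int d^2\,\dd\eta_i \leq \e^{-2Kt}d^2(x_i,y_i)$. Taking the product coupling $\eta=\bigotimes_i\eta_i$ on $M^\N\times M^\N$ and pushing forward under $(l,l)$ produces a coupling of $p^\cs_t(\gamma,\cdot)$ and $p^\cs_t(\omega,\cdot)$; using that $\dcs$ of two configurations is bounded by the cost of \emph{any} matching of chosen labelings, its transport cost is at most $\sum_i\int d^2\,\dd\eta_i \leq \e^{-2Kt}\sum_i d^2(x_i,y_i)=\e^{-2Kt}\dcs^2(\gamma,\omega)$. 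One technical point to check here is that $\eta$ is concentrated on pairs of labelings of genuine (locally finite) configurations, i.e.\ on $\cA\times\cA$; but this follows from Theorem \ref{thm:sg-welldef}, which guarantees $p^\N_t((x_i)_i,\cA)=1$ for good configurations, together with the fact that $\eta$ has the correct marginals.

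The main obstacle I anticipate is measurability and the legitimacy of the gluing/integration step in this extended-metric setting: one must ensure that $\gamma\mapsto p^\cs_t(\gamma,\cdot)$ is measurable (which it is, being a Markov kernel by Theorem \ref{thm:sg-welldef}) and that the map producing couplings $i\mapsto\eta_i$, and hence $(\gamma,\omega)\mapsto$ the product coupling, can be chosen measurably — this requires a measurable selection argument for optimal couplings on $M$, which is standard but should be invoked carefully. A second subtlety is that $\dcs$ takes the value $+\infty$, so $W_{2,\dcs}$ does too, and one should note that $H^\cs_t$ preserves the fibers $\cs^{(n)}$ in the relevant sense and does not create finite distances out of infinite ones, so the inequality is vacuously true when $W_{2,\dcs}(\mu,\nu)=\infty$; conversely when it is finite, $q$ is concentrated on pairs at finite distance and the argument above applies fiberwise. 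Finally, one should confirm that $H^\cs_t\mu$ and $H^\cs_t\nu$ are themselves good measures (concentrated on $\Theta$), which is exactly the content of the invariance of $\Theta$ under $p^\cs_t$ proved in Theorem \ref{thm:sg-welldef}, so that \eqref{eq:W2cont} is a statement within $\cP_g(\cs)$.
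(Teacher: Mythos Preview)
Your proposal is correct and follows essentially the same route as the paper: first establish the pointwise contraction $W_{2,\dcs}\big(p^\cs_t(\gamma,\cdot),p^\cs_t(\sigma,\cdot)\big)\leq \e^{-Kt}\dcs(\gamma,\sigma)$ by choosing optimal labelings via Lemma~\ref{lem:existence of matching}, tensorizing optimal couplings on $M$, pushing forward under $(l,l)$, and invoking \eqref{eq:WcontM}; then integrate against an optimal coupling of $\mu,\nu$ using convexity of $W_2^2$. The paper's proof is slightly more terse---it does not spell out the measurable-selection issue you raise, relying instead on the standard convexity of the squared Wasserstein distance under mixtures---but the structure is identical.
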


\begin{proof}
  First we show that for all $\gamma,\sigma\in \Theta$ we have:
  \begin{align}\label{eq:dcscont}
    W_{2,\dcs}\big(p^\cs_t(\gamma,\cdot),p^\cs_t(\sigma,\cdot)\big) ~\leq~ \e^{-Kt}\dcs(\gamma,\sigma)\quad\forall t>0\;.
  \end{align}
  We can assume that $\dcs(\gamma,\sigma)<\infty$ and consider only
  the case $\gamma(M)=\sigma(M)=\infty$. Then by Lemma
  \ref{lem:existence of matching} there exist labelings
  $\gamma=\sum_{i=1}^\infty\delta_{x_i}$ and
  $\sigma=\sum_{i=1}^\infty\delta_{y_i}$ such that
  $\dcs^2(\gamma,\sigma)=\sum_{i=1}^\infty d^2(x_i,y_i)$. Now, for any
  $i$ choose an optimal coupling $q_i\in \cP(M\times M)$ of
  $p^M_t(x_i,\cdot)$ and $p^M_t(y_i,\cdot)$ such that
  \begin{align*}
    W_{2,d}^2 \big(p^M_t(x_i,\cdot),p^M_t(y_i,\cdot)\big) = \int d^2(u,v)\dd q_i(u,v)\;. 
  \end{align*}
  Let $q^\N=\bigotimes_{i=1}^\infty q_i\in \cP(M^\N\times M^\N)$ and
  set $q=(l\times l)_\#q^\N$, where $l$ is the labelling map. Then
  $q\in\cP(\Theta\times\Theta)$ defines a coupling of
  $p^\cs_t(\gamma,\cdot)$ and $p^\cs_t(\sigma,\cdot)$. Now we can estimate:
  \begin{align*}
    W^2_{2,\dcs}\big(p^\cs_t(\gamma,\cdot),p^\cs_t(\sigma,\cdot)\big) ~&\leq~ \int \dcs^2\dd q ~=~ \int \dcs^2\big(l(\boldsymbol{u}),l(\boldsymbol{v})\dd q^\N(\boldsymbol{u},\boldsymbol{v})\\
   ~&\leq \sum_{i=1}^\infty \int  d^2(u_i,v_i)\dd q_i(u_i,v_i) ~=~ \sum_{i=1}^\infty W^2_{2,d}\big(p^M_t(x_i,\cdot),p^M_t(y_i,\cdot)\big)\\
   ~&\leq~ \e^{-2Kt}\sum_{i=1}^\infty d^2(x_i,y_i) ~=~ \e^{-2Kt}\dcs^2(\gamma,\sigma)\;.
  \end{align*}
  Here we have estimated $\dcs$ by the choice of a special labeling in
  the second inequality and used \eqref{eq:WcontM} in the third
  inequality.  Finally, to prove \eqref{eq:W2cont} we can again assume
  that $W_{2,\dcs}(\mu,\nu)<\infty$ and choose an optimal coupling
  $q$ of $\mu$ and $\nu$. Then by convexity of the squared
  Wasserstein distance we get
  \begin{align*}
    W_{2,\dcs}^2(H^\cs_t\mu,H^\cs_t\nu) ~&\leq~ \int W^2_{2,\dcs}\big(p^\cs_t(\gamma,\cdot),p^\cs_t(\sigma,\cdot)\big)\dd q(\gamma,\sigma)\\
                                 ~&\leq~ \e^{-2Kt} \int \dcs^2(\gamma,\sigma)\dd q(\gamma,\sigma) ~=~ \e^{-2Kt}W^2_{2,\dcs}(\mu,\nu)\;. 
  \end{align*}
 \end{proof}

\begin{remark}
  The same argument as in the previous proof yields that for any
  $p\in[1,\infty]$ and any $\mu,\nu\in\cP_g(\cs)$:
  \begin{align*}
   W_{p,d_{\cs,p}}(H^\cs_t\mu,H^\cs_t\nu)~\leq~ \e^{-Kt}W_{p,d_{\cs,p}}(\mu,\nu)\quad\forall t>0\;,
  \end{align*}
  where $d_{\cs,p}$ is the $L^p$-transport distance between non-normalized measures.

  Moreover, combining the construction of the semigroup in
  \eqref{eq:naivsgcs} with
  \cite[Cor. 1(x)]{vRenesseSturm:gradientestimates} one can show along
  the lines of the previous proof that for any two good configurations
  $\gamma$ and $\sigma$ there exist a coupling
  $(\bB^\gamma_t,\bB^\sigma_t)$ of the two copies of the independent
  particle process in $\Theta$ on some probability space
  $(\Omega,\cF,\P)$ starting in $\gamma$ respectively $\sigma$ such
  that
  \begin{align*}
    \dcs(\bB^\gamma_t,\bB^\sigma_t)\leq \e^{-Kt}\ \dcs(\gamma,\sigma) \quad
    \P\mbox{ a.s.}
  \end{align*}
\end{remark}

\section{Synthetic Riemannian Ricci curvature}
\label{sec:RCD}

It has been proven in \cite{vRenesseSturm:gradientestimates} that $M$
has Ricci curvature bounded below by $K$, if and only if the entropy
is $K$-convex along geodesics in $\big(\cP_2(M),W_2\big)$. This result
has been the starting point for {\sc Sturm} \cite{sturm:gmms1} and
{\sc Lott--Villani} \cite{LV09} to define a notion of Ricci curvature for
metric measure spaces.

The goal of this section is to show that the configuration space
satisfies (a version for extended metric measure spaces of) this so-called CD$(K,\infty)$
condition, provided $\Ric_M\geq K$. Unlike the previous results we
will not obtain this by ``lifting'' the corresponding statement from
the base space. Instead we will follow the approach in \cite{AGS12}
and derive the so-called Evolution Variational Inequality starting from
the gradient estimates established in Theorem
\ref{thm:config-gradest}. This will yield geodesic convexity as an
immediate consequence and as a side product give the characterization
of the heat semigroup on $\cs$ as the gradient flow of the entropy.

The argument will follow closely the lines of \cite[Sec. 4]{AGS12}. A
careful inspection of the proofs given there in the case of a
Dirichlet form with finite intrinsic distance, reveals that most of
them carry over to the present setting of an extended metric measure
space. We give a sketch of the arguments in Section \ref{sec:action}
to make this transparent. However, in the configuration space setting
we need to work significantly more to establish the required
regularization properties of the heat semigroup. This is the purpose
of Section \ref{sec:dualsg}. We assume from now on that $\Ric_M\geq
K$.

\subsection{Regularizing properties of the dual semigroup}
\label{sec:dualsg}

We denote the set of probability measures absolutely continuous 
w.r.t.\ $\pi$ by $\cP_{ac}(\cs)$. Given $\mu\in\cP_{ac}(\cs)$ with
$\mu=f\pi$ we define the action of the dual semigroup $H_t^\cs$ via
\begin{align*}
  H_t^\cs\mu ~=~ (T_t^\cs f)\pi\;.
\end{align*}
Note that this coincides with $H_t^\cs\mu$ defined for good probability
measures $\mu$ in \eqref{eq:dualsg-good}. Thanks to the Wasserstein
contractivity \eqref{eq:W2cont} we can extend $H_t^\cs$ to a contractive
semigroup on the closure of $\cP_{ac}(\cs)$ w.r.t.\ $W_2$. 

Given $\mu\in\cP_{ac}(\cs)$ with $\mu=f\pi$ the \emph{relative
  entropy} w.r.t.\ $\pi$ is defined by
\begin{align*}
  \ent(\mu) ~=~ \int f\log f\dd\pi\;.
\end{align*}
If $\mu$ is not absolutely continuous we set $\ent(\mu)=\infty$. Note
that $\ent(\mu)\geq0$ for all $\mu\in\cP(\cs)$ since $\pi$ is a
probability measure. We write $D(\ent)=\{\mu:\ent(\mu)<\infty\}$. We
will denote by $\cP_e$ the set of all probability measures whose fiber
contains a measure of finite entropy,
\begin{align*}
  \cP_e = \{\mu\in\cP(\cs) : \exists \nu\in D(\ent),\ W_2(\mu,\nu)<\infty\}\;.
\end{align*}

\begin{lemma}
  \label{lem:ac-approx}
  For any $\mu\in\cP_e$ there exists a sequence of probability
  measures $(\mu_n)_n\in D(\ent)$ with $W_2(\mu_n,\mu)\to0$ as
  $n\to\infty$. In particular, $H_t^\cs\mu$ is defined for any such
  $\mu$.
\end{lemma}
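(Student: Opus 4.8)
The plan is to regularize $\mu$ in two stages: first smooth it out by a displacement interpolation toward a finite‑entropy measure (to gain absolute continuity), and then truncate densities (to gain finite entropy), controlling $W_2$ at each step.

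Fix $\nu=g\pi\in D(\ent)$ with $W_2(\mu,\nu)<\infty$. Since $\nu$ is a probability measure and $-x\log x\le 1/e$, one has $g\in L^1(\pi)$ and $g\log g\in L^1(\pi)$. Applying the transport theorem of \cite{De08} (in its Riemannian form, quoted above) \emph{with $\nu$ in the role of the absolutely continuous measure}, we obtain a map $T\colon\cs\to\cs$ with $T_\#\nu=\mu$ such that $q:=(\mathrm{id},T)_\#\nu$ is the optimal coupling. For $\nu$‑a.e.\ $\eta$ we have $\dcs(\eta,T(\eta))<\infty$, so by Lemma \ref{lem:existence of matching} and Corollary \ref{cor:geo} we may select, in a standard measurable way in $\eta$, a geodesic $s\mapsto\Gamma^\eta_s$ in $(\cs,\dcs)$ of the form $\Gamma^\eta_s=\sum_i\delta_{c^\eta_i(s)}$ with $c^\eta_i$ a minimizing geodesic in $M$, joining $\Gamma^\eta_0=\eta$ to $\Gamma^\eta_1=T(\eta)$. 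Put $I_s(\eta):=\Gamma^\eta_s$ and $\mu_s:=(I_s)_\#\nu$, so that $\mu_0=\nu$ and $\mu_1=\mu$. Since $\dcs(\Gamma^\eta_s,\Gamma^\eta_t)=|t-s|\,\dcs(\eta,T(\eta))$ and $q$ is optimal, the couplings $(I_s,\mathrm{id})_\#\nu$ and $(I_s,T)_\#\nu$ give $W_2(\mu_s,\nu)\le s\,W_2(\mu,\nu)<\infty$ and $W_2(\mu_s,\mu)\le(1-s)W_2(\mu,\nu)\to0$ as $s\uparrow1$.

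The crucial point is that $\mu_s\ll\pi$ for every $s\in[0,1)$. This is the configuration‑space analogue of the classical fact that, on a complete Riemannian manifold, the McCann displacement interpolation between an absolutely continuous measure and an arbitrary one stays absolutely continuous on $[0,1)$ — the Jacobian of the interpolation map being non‑degenerate for $s<1$. I would establish it by reducing to the finite‑dimensional statement on products $M^N$, using an exhaustion of $M$ by balls $B_1\subset B_2\subset\cdots$ together with the factorization $\pi=\pi_{B_N}\otimes\pi_{B_N^c}$ and the description of $\pi_{B_N}$ as a Poisson mixture of symmetrized uniform product measures, and then letting $N\to\infty$. I expect the passage from the finite to the infinite product to be the main technical obstacle of the proof; all the remaining ingredients are soft.

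Granting $\mu_s=h_s\pi$, I would conclude by a truncation that transports the excess mass back onto $\nu$. Fix $s\in(0,1)$ and let $\hat q:=(I_s,\mathrm{id})_\#\nu$, a coupling of $\mu_s$ and $\nu$ with $\int\tfrac12\dcs^2\dd\hat q=s^2W_2^2(\mu,\nu)<\infty$, disintegrated as $\hat q(\dd\gamma,\dd\eta)=\mu_s(\dd\gamma)\hat q^\gamma(\dd\eta)$. For $R\ge1$ set
\begin{align*}
  \mu_{s,R}~:=~\1_{\{h_s\le R\}}\mu_s+\int_{\{h_s>R\}}\hat q^\gamma(\cdot)\,\mu_s(\dd\gamma)\;.
\end{align*}
This is a probability measure; its first ("kept") part has density $\le R$ and its second ("transported") part is dominated by the second marginal $\nu$ of $\hat q$, so $\mu_{s,R}=h_{s,R}\pi$ with $0\le h_{s,R}\le R+g$, whence, using $g,g\log g\in L^1(\pi)$,
\begin{align*}
  \ent(\mu_{s,R})~=~\int h_{s,R}\log h_{s,R}\dd\pi~\le~\int(R+g)\log(R+g)\dd\pi~<~\infty\;,
\end{align*}
so $\mu_{s,R}\in D(\ent)$. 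Coupling $\mu_s$ with $\mu_{s,R}$ by the identity on $\{h_s\le R\}$ and by $\hat q^\gamma$ on $\{h_s>R\}$ yields
\begin{align*}
  W_2^2(\mu_{s,R},\mu_s)~\le~\int_{\{h_s>R\}\times\cs}\tfrac12\dcs^2\dd\hat q\;,
\end{align*}
which tends to $0$ as $R\to\infty$ by dominated convergence, since $h_s<\infty$ $\pi$‑a.e.\ and $\tfrac12\dcs^2\in L^1(\hat q)$. Choosing $s_n\uparrow1$ and then $R_n$ large enough, the measures $\mu_n:=\mu_{s_n,R_n}\in D(\ent)$ satisfy $W_2(\mu_n,\mu)\le W_2(\mu_n,\mu_{s_n})+(1-s_n)W_2(\mu,\nu)\to0$. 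Finally, since $D(\ent)\subset\cP_{ac}(\cs)$, the measure $\mu$ lies in the $W_2$‑closure of $\cP_{ac}(\cs)$, so $H^\cs_t\mu$ is well defined via the contractive extension of the dual semigroup.
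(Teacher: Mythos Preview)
Your argument hinges entirely on the claim that the displacement interpolation $\mu_s=(I_s)_\#\nu$ is absolutely continuous with respect to $\pi$ for $s\in[0,1)$, and this is precisely where the proof has a genuine gap. You acknowledge that this is ``the main technical obstacle'' and offer only a heuristic: reduce to finite products $M^N$ via the factorization $\pi=\pi_{B_N}\otimes\pi_{B_N^c}$ and pass to the limit. But neither the transport map $T$ from \cite{De08} nor the geodesic interpolation $I_s$ respects this factorization --- points are moved across $\partial B_N$ --- so there is no obvious way to decouple the problem into a finite-dimensional piece on which Monge--Amp\`ere/Jacobian arguments apply and a remainder that is already controlled. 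Moreover, absolute continuity is not stable under projective limits in any routine way, and no Jacobian calculus for $I_s$ on $\cs$ is available in the literature. In short, the step you flag as hard is not merely hard; as written it is unproven, and it is not clear that the proposed route can be made to work.

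The paper avoids this issue entirely by a different, explicit construction: it localizes to a large ball $B_n$, keeps the $\nu$-configuration outside $B_n$ (so the ``outer'' entropy is controlled by $\ent(\nu)$), and inside $B_n$ replaces each point of a suitably modified configuration by an independent uniform distribution on a small ball. Absolute continuity and the entropy bound then follow from a direct finite-dimensional computation on $\cs_{B_n}^{(k)}$, with no appeal to Jacobian estimates for optimal maps. Your truncation step (once $\mu_s\ll\pi$ is granted) is clean, but the heart of the matter lies earlier.
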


The proof relies on an explicit construction but is rather lengthy and
we postpone it to the appendix. Note that $\cP_e$ is obviously the
maximal set of measures that can be approximated in this way. To prove
regularization properties of $H_t^\cs$ we need to collect some
estimates. The \emph{Fisher information} of $\mu=f\pi$ with
$\sqrt{f}\in\cF$ is defined via
\begin{align*}
  I(\mu):= 4 \cE(\sqrt{f})\;,
\end{align*}
Otherwise we set $I(\mu)=\infty$. Note that we can also write
\begin{align*}
  I(\mu)~=~\int_{\{f>0\}}\frac{\Gamma^\cs(f)}{f}\dd\pi\;,
\end{align*}
and that $I$ is convex on $\cP_{ac}(\cs)$, see
\cite[Prop. 4.1]{AGS12}. 

A curve $\mu:J\to\cP(\cs)$ is called \emph{$p$-absolutely continuous}
w.r.t.\ $W_2$ on an interval $J$, written $\mu\in
AC^p\big(J,(\cP(\cs),W_2)\big)$, if there exist $a\in L^p(J,\Leb)$ such
that for all $s,t\in J$:
\begin{align*}
  W_2(\mu_s,\mu_t)~\leq~\int_s^ta(r)\Leb(\dd r)\;.
\end{align*}
For any absolutely continuous curve $\mu:J\to\cP(\cs)$ the \emph{metric derivative}
defined by
\begin{align*}
  \abs{\dot\mu_t} ~=~ \lim\limits_{h\to0} \frac{W_2(\mu_{t+h},\mu_t)}{h}
\end{align*}
exits for a.e. $t\in J$, see \cite[Thm. 1.1.2]{AGS08}.

Having identified the Dirichlet form $\cE$
with the Cheeger energy $\Ch$ constructed from $\dcs$ in Proposition
\ref{prop:E=2Ch} yields in particular that $T_t^\cs$ coincides with
the gradient flow of $\Ch$ in $L^2(\cs,\pi)$. This allows us to apply
useful estimates for this gradient flow established in \cite{AGS11a}.

\begin{lemma}\label{lem:dissipation}
  Let $\mu=f\pi$ with $\ent(\mu)<\infty$ and set $\mu_t=(T^\cs_t
  f)\pi$. Then the map $t\mapsto\ent(\mu_t)$ is non-increasing,
  locally absolutely continuous. Moreover, we have for all $T>0$:
 \begin{align}\label{eq:Fisherbound1}
   \int_0^T I(\mu_t)\dd t ~\leq~ 2\ent(\mu_0)\;.
 \end{align}
 The curve $t\mapsto \mu_t$ is absolutely continuous w.r.t.\ $W_2$ and
 for a.e. $t$:
 \begin{align}\label{eq:speedFisher}
   \abs{\dot\mu_t}^2 ~\leq~ I(\mu_t)\;.
 \end{align}
\end{lemma}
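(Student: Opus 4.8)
The starting point is Proposition \ref{prop:E=2Ch}: once $\cE=2\Ch$, the semigroup $(T^\cs_t)$ is the $L^2(\cs,\pi)$-gradient flow of the Cheeger energy $\Ch$ built from $\dcs$, and (as checked just before that proposition) $(\cs,\dcs,\pi)$ is a Polish extended metric measure space in the sense of \cite{AGS11a}. All four assertions are the standard entropy-dissipation and metric-speed estimates for the heat flow on such a space; the plan is to import them from \cite{AGS11a}, taking care of the fact that $\dcs$ is only an extended distance, which forces the Wasserstein estimate to be re-derived through the Hopf--Lax machinery of Section \ref{sec:dual and HL}.

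For the entropy part I would first take a regular density, $0<c\le f\le C$. By Markovianity $u_t:=T^\cs_t f$ obeys the same two-sided bound, so $\log u_t$ is a bounded element of $\cF$ with $\nabla^\cs\log u_t=\nabla^\cs u_t/u_t$ by the chain rule for $\cE$. Differentiating under the integral sign and using $\int\Delta^\cs u_t\dd\pi=0$,
\begin{align*}
  \ddt\ent(\mu_t) ~=~ \int(\log u_t)\,\Delta^\cs u_t\dd\pi ~=~ -\cE(\log u_t,u_t) ~=~ -\int\frac{\Gamma^\cs(u_t)}{u_t}\dd\pi ~=~ -I(\mu_t)~\le~0\;.
\end{align*}
Integrating over $[s,t]$ gives $\ent(\mu_s)-\ent(\mu_t)=\int_s^t I(\mu_r)\dd r$, hence $t\mapsto\ent(\mu_t)$ is non-increasing and locally absolutely continuous, and, since $\ent\ge0$, $\int_0^T I(\mu_t)\dd t=\ent(\mu_0)-\ent(\mu_T)\le\ent(\mu_0)\le2\ent(\mu_0)$. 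For a general density $f$ with $\ent(\mu)<\infty$ I would approximate by truncations $f_n=\big((f\vee n^{-1})\wedge n\big)/c_n$, for which $\ent(f_n\pi)\to\ent(\mu)$, and pass to the limit in the displayed identity using $T^\cs_t f_n\to T^\cs_t f$ in $L^2(\cs,\pi)$, the lower semicontinuity and convexity of both $\ent$ and $I$, and continuity of $t\mapsto\ent(\mu_t)$ at $t=0$ (which follows from $T^\cs_t f\to f$ in $L^1$ and lower semicontinuity of $\ent$). This yields (i) and (ii).

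For (iii)--(iv) I would run Kuwada's duality argument in this setting. Fix $0\le s<t$, set $\tau=t-s$, let $\phi\in\Lip_b(\cs)\cap C(\cs)$, and consider $\Phi(r):=\int Q_{(r-s)/\tau}(-\phi)\dd\mu_r$ on $[s,t]$; each $Q_{(r-s)/\tau}(-\phi)$ is bounded and $\dcs$-Lipschitz, hence lies in $\cF$ by Theorem \ref{thm:Rademacher}(i), so $\cE\big(u_r,Q_{(r-s)/\tau}(-\phi)\big)$ is meaningful. Using Proposition \ref{prop:HJsubsol} for $\partial_r$ of the first factor, $\partial_r u_r=\Delta^\cs u_r$ for the second, the bound $\Gamma^\cs(Q\phi)\le|DQ\phi|^2$ from Lemma \ref{lem:lip-gamma}, and Young's inequality to absorb the cross term, one gets $\Phi'(r)\le\tfrac{\tau}{2}\,I(\mu_r)$ for a.e.\ $r$; integrating and taking the supremum over $\phi$, Corollary \ref{cor:duality} yields
\begin{align*}
  W_2^2(\mu_s,\mu_t)~\le~\frac{t-s}{2}\int_s^t I(\mu_r)\dd r\;.
\end{align*}
Since $\int_0^T I(\mu_r)\dd r<\infty$ by (ii), this bound — together with the continuity equation satisfied by $\mu_t$ with velocity $-\nabla^\cs u_t/u_t$, handled exactly as in \cite{AGS11a} — gives absolute continuity of $t\mapsto\mu_t$ with respect to $W_2$; dividing by $(t-s)^2$ and letting $t\downarrow s$ at a Lebesgue point of $r\mapsto I(\mu_r)$ gives $\abs{\dot\mu_s}^2\le I(\mu_s)$ for a.e.\ $s$.

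The routine parts (chain rule in $\cF$, truncation, semicontinuity) are harmless once $\cE=2\Ch$. I expect the real work to be the Kuwada step in the extended setting: checking that the rescaled Hopf--Lax functions lie in $\cF$, that $\Phi$ is absolutely continuous with the stated derivative bound despite the at most countably many exceptional times in Proposition \ref{prop:HJsubsol}, and that the supremum over $\phi\in\Lip_b(\cs)\cap C(\cs)$ genuinely reconstructs $W_2^2$ — which is precisely why Corollary \ref{cor:duality} (rather than the bare duality of Theorem \ref{thm:duality}, whose optimizer need not exist) is invoked.
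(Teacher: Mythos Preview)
Your proposal is correct and follows essentially the same route as the paper: both rely on Proposition~\ref{prop:E=2Ch} to identify $T^\cs_t$ with the gradient flow of the Cheeger energy and then appeal to the entropy-dissipation and Kuwada-type speed estimates from \cite{AGS11a}. The only difference is one of presentation: the paper simply cites \cite[Lem.~4.19, Prop.~4.22]{AGS11a} for the entropy/Fisher statements (with a truncation to pass from $L^1\cap L^2$ to $L^1$) and \cite[Lem.~6.1]{AGS11a} for \eqref{eq:speedFisher}, whereas you unpack those arguments explicitly --- your Hopf--Lax/duality computation for the metric-speed bound is precisely the content of the cited Lemma~6.1.
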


\begin{proof}
  That the entropy is non-increasing and \eqref{eq:Fisherbound1} holds for
  $f\in L^1(\pi)\cap L^2(\pi)$ are proven in \cite[Lem. 4.19,
  Prop. 4.22]{AGS11a}. The general statement follows by a truncation
  argument using the lower semicontinuity of $I$ in $L^1(\cs,\pi)$ and
  of $\ent$ w.r.t.\ weak convergence (and thus also in
  $L^1(\cs,\pi)$). Finally \cite[Lem.\ 6.1]{AGS11a} gives
  \eqref{eq:speedFisher}.
\end{proof}

The following $\log$-Harnack and entropy--cost inequalities will be
crucial for the regularizing properties of the dual semigroup.

\begin{lemma}\label{l:entropycost}
  For any bounded Borel-measurable function $f:\cs\to\R$ all $t>0$ and
  $\gamma,\sigma\in\Theta$ we have:
  \begin{align}\label{eq:Harnack1}
    (\tilde T^\cs_t\log f)(\gamma)~\leq~ \log\big(\tilde T^\cs_{t}f(\sigma)\big) + \frac{K}{2(1-\e^{-2Kt})}\ d^2_\cs(\gamma,\sigma)\;. 
  \end{align}
  In particular, for any $\mu\in\cP_{ac}(\cs)$ and $\nu\in D(\ent)$
  we have:
 \begin{align}\label{eq:Harnack2}
   \ent(H_t^\cs\mu) ~\leq~ \ent(\nu) + \frac{K}{2(1-\e^{-2Kt})}\ W_2^2(\mu,\nu)\;.
\end{align}
\end{lemma}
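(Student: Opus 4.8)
The plan is to lift the dimension-free log-Harnack inequality from the base manifold $M$ to the configuration space via the product representation $p^\cs_t(\gamma,\cdot) = (l)_\# p^\N_t((x_n)_n,\cdot)$ established in Theorem \ref{thm:sg-welldef}, and then integrate it to obtain the entropy--cost inequality. First I would recall the one-particle log-Harnack inequality: since $\Ric_M \geq K$, by \cite[Cor. 1.2]{vRenesseSturm:gradientestimates} (or the Wang-type Harnack inequality, cf.\ \cite{vRenesseSturm:gradientestimates}) one has, for every bounded measurable $g : M \to \R$, every $t>0$ and all $x,y \in M$,
\begin{align*}
  (T^M_t \log g)(x) ~\leq~ \log\big(T^M_t g(y)\big) + \frac{K}{2(1-\e^{-2Kt})}\, d^2(x,y)\;.
\end{align*}
(When $K \leq 0$ the constant $\frac{K}{2(1-\e^{-2Kt})}$ should be read as the corresponding nonpositive quantity; the inequality is still valid and in fact trivial for the sign of the error term when $K<0$. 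The relevant case for the entropy bound is $K>0$; for $K \leq 0$ one simply drops the error term or uses $\e^{-2Kt}$ appropriately.)

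Next I would tensorize. Fix $\gamma,\sigma \in \Theta$ with $\dcs(\gamma,\sigma) < \infty$; by Lemma \ref{lem:existence of matching} choose labelings $\gamma = \sum_i \delta_{x_i}$, $\sigma = \sum_i \delta_{y_i}$ with $\dcs^2(\gamma,\sigma) = \sum_i d^2(x_i,y_i)$. For a bounded Borel $f : \cs \to \R$ write $\hat f : M^\N \to \R$ as in Section \ref{sec:gradest}. The product structure gives $\tilde T^\cs_t f(\gamma) = \int_{M^\N} \hat f \dd p^\N_t((x_i)_i,\cdot)$. Applying the one-particle log-Harnack inequality coordinate by coordinate together with Jensen's inequality (to handle $\log$ of the iterated one-particle expectations) and summing the error terms over $i$ yields exactly \eqref{eq:Harnack1}: the infinite sum of error terms is controlled because $\sum_i d^2(x_i,y_i) = \dcs^2(\gamma,\sigma) < \infty$. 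The standard way to run this argument cleanly is to write $T^\N_t = \prod_j T^j_t$ and telescope, applying the scalar inequality in one coordinate at a time while using the contractivity/monotonicity of $\log$ composed with conditional expectations; this is the step where some care is needed to make sure the finite-sum estimates pass to the infinite product without losing constants, and it is the main technical obstacle.

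Finally I would integrate \eqref{eq:Harnack1} to get \eqref{eq:Harnack2}. Let $\mu = f\pi \in \cP_{ac}(\cs)$ and $\nu \in D(\ent)$ with $\nu = g\pi$; we may assume $W_2(\mu,\nu) < \infty$, otherwise there is nothing to prove. Choose an optimal coupling $q \in \Opt(\mu,\nu)$. Since $\ent(H^\cs_t \mu) = \ent((T^\cs_t f)\pi)$ and using the standard dual (variational) characterization of relative entropy, $\ent(H^\cs_t\mu) = \sup\{\int h \dd H^\cs_t\mu - \log\int \e^h \dd\pi\}$, together with $\int h \dd H^\cs_t\mu = \int T^\cs_t h \dd \mu$; applying this with $h = \log f$ type test functions and $\pi$ a probability measure, one reduces to bounding $\int (\tilde T^\cs_t \log f)(\gamma) \dd\nu(\gamma)$-flavoured quantities. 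Concretely: by Jensen and the definition of $H^\cs_t$, $\ent(H^\cs_t\mu) = \int \log\frac{\dd H^\cs_t\mu}{\dd\pi} \dd H^\cs_t\mu$; writing $\frac{\dd H^\cs_t\mu}{\dd\pi} = T^\cs_t f$ and using that $\int (\tilde T^\cs_t \log T^\cs_t f)\dd\mu \geq \ent(H^\cs_t\mu)$ is not quite direct, so instead I would argue: integrate \eqref{eq:Harnack1} (with $f$ replaced by $T^\cs_t f / g$ or directly with the density) against $q(\dd\gamma,\dd\sigma)$ in the variables $(\gamma,\sigma)$, using $\mu$ in the $\gamma$-slot and $\nu$ in the $\sigma$-slot, and then recognize the left-hand side as $\ent(H^\cs_t\mu)$ via the definition $\int (\tilde T^\cs_t h)\dd\mu = \int h \dd H^\cs_t \mu$ and Jensen's inequality $\int \log(\cdot)\dd\nu \leq \log(\cdot)$-type manipulations, and the right-hand side as $\ent(\nu) + \frac{K}{2(1-\e^{-2Kt})} W_2^2(\mu,\nu)$ since $\int \dcs^2(\gamma,\sigma)\dd q = 2W_2^2(\mu,\nu)$ — here one must be careful with the factor $\tfrac12$ in the definition of $W_2^2$; adjusting constants gives \eqref{eq:Harnack2}. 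This last bookkeeping with the entropy's dual formulation and the $\tfrac12$ normalization is routine once \eqref{eq:Harnack1} is in hand, so I expect the genuine difficulty to lie entirely in the tensorization step described above.
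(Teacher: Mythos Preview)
Your approach to \eqref{eq:Harnack1} differs from the paper's: the paper simply quotes the log-Harnack inequality on configuration space from \cite[Thm.~2.2]{Deng2014} (for $f\ge 1$) and then removes the restriction by applying it to $f_\eps=\eps^{-1}(f+\eps)$ and letting $\eps\to0$. Your proposal to tensorize the one-particle Wang log-Harnack inequality through the product representation $p^\cs_t(\gamma,\cdot)=(l)_\#\bigotimes_i p^M_t(x_i,\cdot)$ is correct in principle and is in fact the standard mechanism behind such results (and presumably behind the cited reference): the inequality $(P\log g)(x)\le \log(Pg)(y)+c$ tensorizes additively in $c$, which you see by applying it one coordinate at a time and telescoping; the infinite-product case follows by passing to the limit since $\sum_i d^2(x_i,y_i)=\dcs^2(\gamma,\sigma)<\infty$. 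One caveat: your parenthetical about the sign of the constant is mistaken. For every $K\neq 0$ the quantity $K/(2(1-\e^{-2Kt}))$ is strictly \emph{positive} (numerator and denominator have the same sign), so the inequality is nontrivial for all $K$; there is nothing ``trivial'' about the case $K<0$.

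For \eqref{eq:Harnack2} you are circling the paper's argument but have not pinned down the right substitution. The clean move, which the paper carries out, is to apply \eqref{eq:Harnack1} with $f$ replaced by $\tilde T^\cs_t f$ and to take $q$ an optimal coupling of $\mu$ and $\nu$. Integrating the $\gamma$-variable against $\mu$ and using the identity $\int \tilde T^\cs_t h\dd\mu=\int h\dd H^\cs_t\mu$ (equivalently, symmetry of $T^\cs_t$ in $L^2(\pi)$) turns the left-hand side into $\int \log(\tilde T^\cs_t f)\dd H^\cs_t\mu=\ent(H^\cs_t\mu)$. On the right-hand side you get $\int \log(\tilde T^\cs_{2t} f)\dd\nu$ plus the transport term, and the former is bounded by $\ent(\nu)$ via Jensen:
\[
\int \log(\tilde T^\cs_{2t} f)\dd\nu
=\int \log\frac{\tilde T^\cs_{2t} f}{g}\dd\nu+\ent(\nu)
\le \log\!\int \tilde T^\cs_{2t} f\dd\pi+\ent(\nu)=\ent(\nu)\,.
\]
Your suggestion to plug in $T^\cs_t f/g$ is unnecessary; the division by $g$ only enters at this Jensen step. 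With this clarification your outline for part~2 coincides with the paper's proof.
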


\begin{proof}
  \eqref{eq:Harnack1} is proven in \cite[ Theorem 2.2]{Deng2014} for
  $f\geq1$. For general measurable $f\geq 0$ we apply this result to
  $f_\eps=\eps^{-1}(f+\eps)\geq1$ and obtain
  \begin{align*}
    \big(\tilde T^\cs_t\log (f+\epsilon)\big)(\gamma)~\leq~ \log\big(\tilde T^\cs_{t}(f(\sigma)+\epsilon)\big) + \frac{K}{2(1-\e^{-2Kt})}\ d^2_\cs(\gamma,\sigma)\;. 
  \end{align*}
  Letting $\eps\to0$ yields \eqref{eq:Harnack1}. To prove
  \eqref{eq:Harnack2} consider $\mu=f\pi\in\cP_{ac}(\cs)$ and
  $\nu=g\pi\in D(\ent)$ with $W_2(\mu,\nu)<\infty$. Applying
  \eqref{eq:Harnack1} with $\tilde T^\cs_t f$ and integrating against
  an optimal coupling $q$ of $H_t^\cs\mu=(\tilde T_t^\cs f)\pi$ and
  $\nu$ we obtain:
  \begin{align*}
    \ent(H_t^\cs\mu) ~&=~ \int \tilde T_t^\cs f \log \tilde T_t^\cs f \dd\pi\\
                 &\leq~ \int \big(\log \tilde T_{2t}^\cs f\big)\dd \nu  + \frac{K}{2(1-\e^{-2Kt})}\ W_2^2(\mu,\nu)\;. 
  \end{align*}
  Using Jensen's inequality and the fact that $\int \tilde T_{2t}^\cs
  f\dd\pi=1$ we estimate:
  \begin{align*}
    \int \big(\log \tilde T_{2t}^\cs f\big)\dd \nu ~&=~ \int \log\left( \frac{\tilde T_{2t}^\cs f}{g}\right)\dd\nu + \int \log g\dd \nu\\
                                                  &\leq~ \log\left(\int  \frac{\tilde T_{2t}^\cs f}{g}\dd \nu\right) + \ent(\nu) ~=~ \ent(\nu)\;,
  \end{align*}
  which proves the claim.
\end{proof}

Consider the following mollification of the semigroup, defined for
$\eps>0$ and $f \in L^p (\cs,\pi)$, $1 \le p \le \infty$ via:
\begin{align}\label{eq:sg-moll}
  h^\eps
  f~=~\int_0^\infty\frac{1}{\eps}\eta\left(\frac{t}{\eps}\right)\tilde T^\cs_tf\,\dd
  t\;,
\end{align}
with a non-negative kernel $\eta\in C^\infty_c(0,\infty)$ satisfying
$\int_0^\infty\eta(t)\dd t=1$. Combining the convexity of $I$ with
\eqref{eq:Harnack2} and \eqref{eq:Fisherbound1} we obtain that for all
$t>0$ and all non-negative $f\in L^1(\cs,\pi)$ the measure
$\mu=(h^\eps f)\pi$ satisfies (see also \cite[Lem. 4.9]{AGS12}):
\begin{align}\label{eq:moll-bounds}
  I(H_t^\cs\mu) ~\leq~ C(\eps)\Big(W^2_2(f\pi,\nu)+\ent(\nu)\Big)\;,
\end{align}
where the constant $C(\eps)$ on the right hand side depends only on
$\eps$.

\begin{lemma}\label{lem:Pt-approx}
  For any $\mu\in\cP_e$ and $t>0$ we have $H_t^\cs\mu\in D(\ent)$,
  $W_2(H_t^\cs\mu,\mu)<\infty$ and moreover, $W_2(H_t^\cs\mu,\mu)\to 0$ as
  $t\to 0$.
\end{lemma}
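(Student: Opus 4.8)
The plan is to combine the entropy--cost inequality \eqref{eq:Harnack2}, the dissipation estimates of Lemma \ref{lem:dissipation}, the Wasserstein contractivity of Theorem \ref{thm:W2contract}, and the approximation provided by Lemma \ref{lem:ac-approx}. Fix $\mu\in\cP_e$ and $t>0$. By definition of $\cP_e$ I would first choose a reference measure $\nu_0\in D(\ent)$ with $W_2(\mu,\nu_0)<\infty$, and by Lemma \ref{lem:ac-approx} a sequence $\mu_n=f_n\pi\in D(\ent)$ with $W_2(\mu_n,\mu)\to0$. Since each $\mu_n\in\cP_{ac}(\cs)$, the measure $\mu$ lies in the $W_2$-closure of $\cP_{ac}(\cs)$, so $H_t^\cs\mu$ is well defined, $W_2(H_t^\cs\mu_n,H_t^\cs\mu)\to0$, and the contraction estimate \eqref{eq:W2cont} passes to this closure, giving $W_2(H_s^\cs\rho,H_s^\cs\rho')\leq\e^{-Ks}W_2(\rho,\rho')$ for all $\rho,\rho'$ in it and all $s>0$.

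To see that $H_t^\cs\mu\in D(\ent)$ I would apply \eqref{eq:Harnack2} with $\mu_n$ in place of $\mu$ and the fixed $\nu_0$:
\begin{align*}
  \ent(H_t^\cs\mu_n) ~\leq~ \ent(\nu_0) + \frac{K}{2(1-\e^{-2Kt})}\,W_2^2(\mu_n,\nu_0)\;.
\end{align*}
Since $\ent(\nu_0)<\infty$ and $\sup_n W_2(\mu_n,\nu_0)\leq\sup_n W_2(\mu_n,\mu)+W_2(\mu,\nu_0)<\infty$, the right-hand sides are bounded uniformly in $n$ by some $C_t<\infty$. As $H_t^\cs\mu_n\to H_t^\cs\mu$ in $W_2$ and the relative entropy is lower semicontinuous along $W_2$-convergent sequences, this forces $\ent(H_t^\cs\mu)\leq C_t<\infty$; in particular $H_t^\cs\mu$ is absolutely continuous and belongs to $D(\ent)$.

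To control $W_2(H_t^\cs\mu,\mu)$ I would use, for each fixed $n$, that by Lemma \ref{lem:dissipation} the curve $s\mapsto\mu^n_s:=(T_s^\cs f_n)\pi$ is $W_2$-absolutely continuous with $\abs{\dot\mu^n_s}^2\leq I(\mu^n_s)$ and $\int_0^t I(\mu^n_s)\dd s\leq 2\ent(\mu_n)$, whence by Cauchy--Schwarz
\begin{align*}
  W_2(H_t^\cs\mu_n,\mu_n) ~\leq~ \int_0^t\abs{\dot\mu^n_s}\dd s ~\leq~ \sqrt{2t\,\ent(\mu_n)}\;.
\end{align*}
Combining this with the triangle inequality and the extended contraction gives, for every $n$,
\begin{align*}
  W_2(H_t^\cs\mu,\mu) ~\leq~ \e^{-Kt}W_2(\mu,\mu_n) + \sqrt{2t\,\ent(\mu_n)} + W_2(\mu_n,\mu) ~<~\infty\;,
\end{align*}
which yields $W_2(H_t^\cs\mu,\mu)<\infty$. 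Letting $t\to0$ (so $\e^{-Kt}\to1$) and then $n\to\infty$ gives $\limsup_{t\to0}W_2(H_t^\cs\mu,\mu)\leq 2\lim_n W_2(\mu_n,\mu)=0$, which is the last assertion.

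The step I expect to require real care is the lower semicontinuity of the relative entropy invoked above. In the present \emph{extended} metric measure setting one should first check that $W_2$-convergence of probability measures on $\cs$ implies narrow convergence -- using that $\dcs(\gamma_n,\gamma)\to0$ forces $\gamma_n\to\gamma$ vaguely, together with a Chebyshev plus optimal-coupling argument applied to couplings $q_n$ with $\int\dcs^2\,\dd q_n\to0$ -- after which lower semicontinuity of $\ent(\cdot\,|\,\pi)$ follows from its standard variational characterization as a supremum of narrowly continuous functionals. All remaining steps are routine applications of the cited estimates.
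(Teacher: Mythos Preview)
Your argument is correct and follows essentially the same route as the paper: use the entropy--cost inequality \eqref{eq:Harnack2} together with lower semicontinuity of the entropy to get $H_t^\cs\mu\in D(\ent)$, use Lemma \ref{lem:dissipation} to control $W_2(H_t^\cs\mu_n,\mu_n)$ for the approximants, and conclude via the triangle inequality and Wasserstein contractivity. The only organizational difference is that the paper first treats the case $\mu\in D(\ent)$ separately and then reduces the general case to it, whereas you fold both into a single approximation argument; your remark about verifying that $W_2$-convergence implies narrow convergence (needed for lower semicontinuity of $\ent$) is a point the paper leaves implicit.
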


\begin{proof}
  First assume that $\mu=f\pi$ and $\ent(\mu)<\infty$ and set
  $\mu_t=H_t^\cs\mu$. Since $H_t^\cs$ decreases the entropy we have
  also $\mu_t\in D(\ent)$. Further, we obtain by
  \eqref{eq:speedFisher} and H\"older's inequality:
  \begin{align*}
    W_2^2(H_t^\cs\mu,\mu) ~\leq~ \int_0^t\abs{\dot\mu_s}\dd s ~\leq~ \sqrt{t} \left(\int_0^t I(\mu_s)\dd s\right)^{\frac12}\;,
  \end{align*}
  which goes to zero as $t\to0$ by \eqref{eq:Fisherbound1} and thus
  the lemma is established for $\mu\in D(\ent)$.

  Now consider the general case where $\mu$ does not belong to
  $D(\ent)$. By Lemma \ref{lem:ac-approx}, we can approximate it in
  $W_2$ by measures $\mu_n\in D(\ent)$. By Lemma \ref{l:entropycost}
  we have for some $\nu\in D(\ent)$ and all $n$
  \begin{align*}
    \ent(H_t^\cs\mu_n)\leq \ent(\nu) + \frac{K}{2(1-\e^{-2Kt})} W_2(\mu_n,\nu)\;.
  \end{align*}
  The left hand side is uniformly bounded in $n$ because
  $W_2(\mu_n,\mu)\to 0.$ Hence, the entropies stay bounded as
  well. Moreover, by the Wasserstein contractivity of $H_t^\cs$ we have
  \begin{align*}
    W_2(H_t^\cs \mu_n,H_t^\cs\mu) \leq \e^{-2Kt}\ W_2(\mu_n,\mu)\to 0\;,
  \end{align*}
  implying the weak convergence of $H_t^\cs\mu_n$ to $H_t^\cs\mu$. By
  lower semicontinuity of the entropy we can derive in the limit
  $n\to\infty$:
  \begin{align*}
    \ent(H_t^\cs\mu)\leq  \liminf \ent(H_t^\cs\mu_n)\leq & \ent(\nu) + \frac{K}{2(1-\e^{-2Kt})} W_2(\mu,\nu)<\infty\;.
  \end{align*}
  Finally, from the triangle inequality together with Wasserstein
  contraction we infer:
  \begin{align*}
    W_2(H_t^\cs\mu,\mu) &\leq W_2(H_t^\cs\mu,H_t^\cs\mu_n) + W_2(H_t^\cs\mu_n,\mu_n) +W_2(\mu_n,\mu)\\
    &\leq (1+\e^{-2Kt}) W_2(\mu,\mu_n) + W_2(H_t^\cs\mu_n,\mu_n)\;,
  \end{align*}
  The right hand side can be made arbitrarily small by first choosing
  $n$ so big such that the first term is small uniformly for
  $t\in[0,1]$ and then taking $t$ small to make the second term small
  by the first part of the proof. This proves the last claim of the
  lemma.
\end{proof}

We will now describe the regularization procedure needed in the
sequel. We will use the notion of \emph{regular curve} as introduced
in \cite[Def.~4.10]{AGS12}. Briefly, we call a curve $(\mu_s)_{s\in[0,1]}$
with $\mu_s=f_s \pi$ regular if the following are satisfied:
\begin{itemize}
\item $(\mu_s)$ is $2$-absolutely continuous in $(\cP(\cs),W_2)$,
\item $\ent(\mu_s)$ and $I(H_t^\cs\mu_s)$ are bounded for $s\in[0,1], t\in[0,T]$ for any $T>0$,
\item $f\in C^1\big([0,1],L^1(\cs,\pi)\big)$ and $\Delta^{(1)}f\in C\big([0,1],L^1(\cs,\pi)\big)$,
\item $f_s=h^\eps\tilde f_s$ for some $\tilde f_s\in L^1(\cs,\pi)$ and $\eps>0$.
\end{itemize}
Here $\Delta^{(1)}$ denotes the generator of the semigroup $T^\cs_t$
in $L^1(\cs,\pi)$ and $h^\eps$ is the mollification of the semigroup
 introduced in \eqref{eq:sg-moll}.

 In the sequel we will denote by $\dot f_s$ the derivative of
 $[0,1]\ni s\mapsto f_s\in L^1(\cs,\pi)$. We will need the following
 result which is an adaption and slight improvement of
 \cite[Prop.~4.11]{AGS12}.

\begin{lemma}[Approximation by regular curves]\label{lem:reg-curves}
  Let $(\mu_s)_{s\in[0,1]}$ be a $2$-absolutely continuous curve in
  $\big(\cP(\cs),W_2\big)$ such that $\mu_s\in\cP_e$ for some (hence
  any) $s\in[0,1]$. Then there exists a sequence of regular curves
  $(\mu_s^n)$ with the following properties. As $n\to\infty$ we have
  for any $s\in[0,1]$:
\begin{align}\label{eq:reg-curves1}
  W_2(\mu^n_s,\mu_s)~&\to~0\;,\\\label{eq:reg-curves1a}
   \limsup \abs{\dot\mu^n_s}~&\leq~\abs{\dot\mu_s}\quad\text{a.e. in }[0,1]\;.
 \end{align}
 Moreover, if $\ent(\mu_0),\ent(\mu_1)<\infty$ we have:
 \begin{align}\label{eq:reg-curves2}
  \ent(\mu^n_0)\to\ent(\mu_0)\;,\quad  \ent(\mu^n_1)\to\ent(\mu_1)\;.   
 \end{align}
\end{lemma}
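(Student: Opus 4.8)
The plan is to mimic the regularization argument of \cite[Prop.~4.11]{AGS12}, carried out in two nested steps: first mollify each fixed time-slice by the heat semigroup, then smooth in the time variable by convolution. I would proceed as follows.

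\textbf{Step 1: time-mollification of each slice.}
Given the $2$-absolutely continuous curve $(\mu_s)_{s\in[0,1]}$ with $\mu_s\in\cP_e$, fix $\eps>0$ and set $\mu_s^\eps:=(h^\eps f_s)\pi$ where, by Lemma~\ref{lem:ac-approx} and the extension of $H^\cs_t$ to $\overline{\cP_{ac}(\cs)}^{W_2}$, we may first replace $\mu_s$ by an absolutely continuous approximation $\tilde\mu_s=\tilde f_s\pi$ with $W_2(\tilde\mu_s,\mu_s)$ small; writing $\mu_s^\eps=(h^\eps\tilde f_s)\pi$ gives the structural requirement $f_s=h^\eps\tilde f_s$ in the definition of regular curve. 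The Wasserstein contractivity \eqref{eq:W2cont} together with the commutation of $H^\cs_t$ with convolution in $t$ shows $W_2(\mu_s^\eps,\mu_s)\to0$ as $\eps\to0$ uniformly in $s$ (using that $t\mapsto H^\cs_t$ is $W_2$-continuous at $0$ by Lemma~\ref{lem:Pt-approx}), and by the $1$-Lipschitz (in fact contractive) property of each $h^\eps$ for $W_2$ one gets $\limsup_\eps\abs{\dot\mu^\eps_s}\leq\abs{\dot\mu_s}$ a.e. The bound \eqref{eq:moll-bounds} gives the uniform control of $I(H^\cs_t\mu^\eps_s)$ for $s\in[0,1]$, $t\in[0,T]$, and the monotonicity of entropy under $H^\cs_t$ plus \eqref{eq:Harnack2} yields the uniform bound on $\ent(\mu^\eps_s)$. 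The mollified density $h^\eps\tilde f_s$ automatically lies in the domain of $\Delta^{(1)}$ with $\Delta^{(1)}h^\eps\tilde f_s = -\eps^{-2}\int_0^\infty\eta'(t/\eps)\tilde T^\cs_t\tilde f_s\dd t\in L^1(\cs,\pi)$, which gives the required $C^1$-regularity in $s$ once the $s$-dependence is smoothed.

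\textbf{Step 2: smoothing in $s$.}
To obtain $f\in C^1([0,1],L^1(\cs,\pi))$ and $\Delta^{(1)}f\in C([0,1],L^1(\cs,\pi))$ I would reparametrize (extending the curve constantly past the endpoints) and convolve the map $s\mapsto f^\eps_s$ against a smooth kernel on the time axis, exactly as in the proof of \cite[Prop.~4.11]{AGS12}; since $h^\eps$ commutes with this convolution, the structural form $f_s=h^\eps(\cdot)$ is preserved, and convexity of $I$ and of $\ent$ together with Jensen's inequality propagate the uniform bounds through the convolution. Absolute continuity in $W_2$ and the inequality $\limsup\abs{\dot\mu^n_s}\leq\abs{\dot\mu_s}$ survive because time-convolution does not increase the metric speed (it is again a $W_2$-contraction by convexity of $W_2^2$). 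Finally, if $\ent(\mu_0),\ent(\mu_1)<\infty$ the convergence \eqref{eq:reg-curves2} follows from joint lower semicontinuity of $\ent$ under weak convergence together with the upper bound coming from \eqref{eq:Harnack2} and a standard diagonal argument letting first the time-convolution parameter and then $\eps$ go to zero; a small technical point is that one must localize near the endpoints so that the constant extension does not spoil the entropy convergence there, which is handled by choosing the convolution kernel supported on shrinking one-sided neighbourhoods of $0$ and $1$.

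\textbf{Main obstacle.}
The delicate part, as in \cite{AGS12}, is juggling the two approximation parameters so that \emph{all} the listed properties hold \emph{simultaneously} along one sequence: the uniform-in-$s$ $W_2$-convergence, the a.e. speed bound \eqref{eq:reg-curves1a}, the uniform Fisher-information and entropy bounds, and the endpoint entropy convergence. The new difficulty relative to \cite{AGS12} is purely that $\dcs$ is an \emph{extended} distance, so $W_2$ can be $+\infty$; the hypothesis $\mu_s\in\cP_e$ (equivalently, by $2$-absolute continuity, that one, hence every, slice lies at finite $W_2$-distance from $D(\ent)$) together with Lemma~\ref{lem:ac-approx} and Lemma~\ref{lem:Pt-approx} is precisely what guarantees that the whole construction stays within a single fiber at finite distance from the domain of the entropy, so that \eqref{eq:moll-bounds} and the $\log$-Harnack bound \eqref{eq:Harnack2} are applicable throughout. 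Once this bookkeeping is set up, the remaining estimates are the routine convexity/lower-semicontinuity manipulations already present in \cite[Sec.~4]{AGS12}.
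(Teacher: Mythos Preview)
Your proposal is correct and follows essentially the same route as the paper, which in turn adapts \cite[Prop.~4.11]{AGS12}: heat-flow regularize each slice (this is what your ``absolutely continuous approximation $\tilde\mu_s$'' via the extension of $H^\cs_t$ amounts to, and is exactly the paper's step $\mu^{n,1}_s=H_{1/n}\mu^{n,0}_s$), convolve in $s$, and apply the mollifier $h^\eps$; the order of the last two is immaterial since they commute.

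Two minor points where the paper's execution is cleaner than yours. First, the paper runs all three regularizations with a \emph{single} parameter $1/n$ rather than separate $\delta,\eps$ and a convolution scale, so no diagonal extraction is needed. Second, and more substantively, your treatment of the endpoint entropy convergence \eqref{eq:reg-curves2} is more complicated than necessary: once you extend the curve constantly near the endpoints (which you do in Step~2) and choose the convolution kernel supported in $[-1/n,1/n]$, the regularized endpoint is \emph{exactly} $\mu^n_i=h^{1/n}H_{1/n}\mu_i$ for $i=0,1$. Since both $H_t$ and $h^\eps$ decrease entropy (Lemma~\ref{lem:dissipation}), one gets $\ent(\mu^n_i)\le\ent(\mu_i)$ directly, and lower semicontinuity under $W_2$-convergence gives the matching liminf. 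This makes your appeal to the log-Harnack bound \eqref{eq:Harnack2}, the diagonal argument, and the one-sided kernel device all superfluous.
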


\begin{proof}
  Following \cite[Prop.~4.11]{AGS12} we employ a threefold
  regularization procedure. Given $n$, we construct a curve
  $(\mu^{n,0}_s)_s$ with $s\in[-\frac1n,1+\frac1n]$ by setting
  \begin{align*}
    \mu^{n,0}_s~=~
    \begin{cases}
      \mu_0\;, & -\frac1n\leq s \leq \frac1n\;,\\
      \mu_{(s-\frac1n)/(1-\frac2n)}\;, & \frac1n\leq s\leq 1-\frac1n\;,\\       
      \mu_1\;, & 1-\frac1n \leq s \leq 1+\frac1n\;.
    \end{cases}
  \end{align*}
   
  Then, for $s\in[0,1]$ we first define
  $\mu^{n,1}_s=H_{1/n}\mu^{n,0}_s=f^{n,1}_s\pi$, which is absolutely
  continuous w.r.t.\ $\pi$ by Lemma \ref{lem:Pt-approx}. The second
  step consists in a convolution in the time parameter. We set
    \begin{align*}
    \mu^{n,2}_s=f_s^{n,2}\pi\;,\qquad f^{n,2}_s~=~\int f^{n,1}_{s-s'}\psi_{n}(s')\dd s'\;,
  \end{align*}
  where $\psi_n(s)=n\cdot\psi(n s)$ for some smooth kernel
  $\psi:\R\to\R_+$ supported in $[-1,1]$ with $\int\psi(s)\dd
  s=1$. Finally, we set
  \begin{align*}
    \mu^{n}_s~=~f^n_s \pi\;,\qquad f^n_s~=~h^{1/n} f^{n,2}_s\;,
  \end{align*}
  where $h^\eps$ denotes a mollification of the semigroup given by
  \eqref{eq:sg-moll}. Following the argument in
  \cite[Prop.~4.11]{AGS12} one sees that $(\mu^n_s)_{s\in[0,1]}$
  constructed in this way is a regular curve and that
  \eqref{eq:reg-curves1} holds. Note that in our setting the
  convergence \eqref{eq:reg-curves1} relies on Lemma
  \ref{lem:ac-approx}, the uniform bounds on entropy and Fisher
  information are ensured by the $L\log L$-regularization
  \eqref{eq:Harnack2} and the estimate
  \eqref{eq:moll-bounds}. \eqref{eq:reg-curves1a} follows from the
  convexity properties of $W_2^2$ and the $K$-contractivity of the
  heat flow. To prove \eqref{eq:reg-curves2}, simply note that for
  $i=0,1$
  \begin{align*}
    \ent(\mu^n_i)=\ent(h^{1/n}H_{1/n}\mu_i) \leq \ent(\mu_i)
  \end{align*}
  since $H_t^\cs$ and hence also $h_t$ decreases the entropy by Lemma
  \ref{lem:dissipation}. This together with \eqref{eq:reg-curves1} and
  lower semicontinuity of $\ent$ implies \eqref{eq:reg-curves2}.
 \end{proof}

\subsection{Action estimate}
\label{sec:action}

Here we establish the key action estimate, Proposition
\ref{prop:action-est}, which allows us to derive the Evolution
Variational Inequality in the next section. 

We proceed very closely along the lines of \cite[Sec. 4.3]{AGS12}
where the corresponding result has been proven in the setting of a
Dirichlet form with a finite intrinsic metric. However, a careful
inspection of the proofs reveals that the same arguments work almost
verbatim in the present context of an extended intrinsic distance. We
give a sketch of the main steps in the argument to make the line of
reasoning transparent. We refer to \cite[Sec.\ 4.3]{AGS12} for detailed
proofs.

For the following lemmas let $(\mu_s)_{s\in[0,1]}$ be a regular curve
and write $\mu_s=f_s\pi$. We set
$\mu_{s,t}=H_{st}^\cs\mu_s=f_{s,t}\pi$. Moreover, let $\phi:\cs\to\R$
be bounded and $\dcs$-Lipschitz. We set $\phi_s=Q_s\phi$ for
$s\in[0,1]$, where
\begin{align*}
  Q_s\phi(\gamma)~:=~\inf\limits_{\sigma\in \cs} \left[ \phi(\sigma) + \frac{\dcs^2(\gamma,\sigma)}{2s} \right]
\end{align*}
denotes the Hopf-Lax semigroup as recalled in Section \ref{sec:dual and HL}.

Following \cite[Lem.\ 4.13]{AGS12} we first obtain the following
estimate.

\begin{lemma}
  \label{lem:derivative-HL}
  For any $t>0$ the map $s\mapsto \int\phi_s\dd\mu_{s,t}$ is
  absolutely continuous and we have for a.e $s\in(0,1)$:
  \begin{align}\label{eq:derivatve-HL}
    \frac{\dd}{\dd s}\int\phi_s\dd\mu_{s,t} ~=~ \int\dot
    f_sT^\cs_{st}\phi_s\dd\pi -\frac12\int\Gamma^\cs(\phi_s)\dd\mu_{s,t} - t
    \int 2\sqrt{f_{s,t}}\Gamma^\cs\big(\sqrt{f_{s,t}},\phi_s\big)\dd\pi\;.
  \end{align}
\end{lemma}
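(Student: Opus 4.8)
The plan is to transcribe the proof of \cite[Lem.~4.13]{AGS12}, checking that each step survives in the present extended-metric-measure setting thanks to the tools assembled above. Write $I(s):=\int\phi_s\dd\mu_{s,t}=\int\phi_s\,f_{s,t}\dd\pi$ with $f_{s,t}:=T^\cs_{st}f_s$, where $\mu_s=f_s\pi$ is the given regular curve. The properties I would use are: since $(\mu_s)$ is regular, $f\in C^1\big([0,1],L^1(\cs,\pi)\big)$, $\Delta^{(1)}f\in C\big([0,1],L^1(\cs,\pi)\big)$, and $f_s=h^\eps\tilde f_s$, so that for fixed $t>0$ the smoothing of the heat semigroup makes $f_{s,t}\in D(\Delta^\cs)$ with $\sqrt{f_{s,t}}\in\cF$; and since $\phi$ is bounded $\dcs$-Lipschitz, the Hopf--Lax a priori bounds of Section~\ref{sec:dual and HL} give $\|\phi_s\|_\infty\le\|\phi\|_\infty$, $\Lip(\phi_s)\le2\Lip(\phi)$ (cf.\ \eqref{eq:Lip-HL}), and, via Proposition~\ref{prop:HJsubsol} (note $t_*\equiv\infty$ here since $\phi$ is bounded), $|\partial_s\phi_s|=\tfrac12|D\phi_s|^2\le2\Lip(\phi)^2$ off a countable set of $s$. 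From the uniform Lipschitz-in-$s$ bound on $\gamma\mapsto\phi_s(\gamma)$ and the $L^1$-Lipschitz-in-$s$ bound on $f_{s,t}$ (for which $\|(T^\cs_\tau-\mathrm{id})f_s\|_{L^1}\le\tau\,\|\Delta^{(1)}f_s\|_{L^1}$ supplies the rate) one obtains that $I$ is Lipschitz on $[0,1]$, in particular absolutely continuous.

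To compute $I'(s)$ I would differentiate under the integral in $\int\phi_s\,T^\cs_{st}f_s\dd\pi$, splitting the $s$-dependence into its three sources. Differentiating the factor $f_s$ gives, after moving $T^\cs_{st}$ onto $\phi_s$ by self-adjointness, the term $\int\dot f_s\,T^\cs_{st}\phi_s\dd\pi$. Differentiating the semigroup time $st$ gives $t\int\phi_s\,\Delta^\cs f_{s,t}\dd\pi$; an integration by parts ($f_{s,t}\in D(\Delta^\cs)$, $\phi_s\in\cF$) rewrites this as $-t\int\Gamma^\cs(\phi_s,f_{s,t})\dd\pi$, and the chain rule $\nabla^\cs f_{s,t}=2\sqrt{f_{s,t}}\,\nabla^\cs\sqrt{f_{s,t}}$ (legitimate since $\sqrt{f_{s,t}}\in\cF$) turns it into $-t\int 2\sqrt{f_{s,t}}\,\Gamma^\cs(\sqrt{f_{s,t}},\phi_s)\dd\pi$. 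Differentiating the factor $\phi_s$ gives $\int(\partial_s\phi_s)\,f_{s,t}\dd\pi$. In each of the three pieces the interchange of $\tfrac{\dd}{\dd s}$ with the integral is justified by dominated convergence, using the bounds of the previous paragraph together with $\|(T^\cs_{(s+h)t}-T^\cs_{st})f_s\|_{L^1}\le|h|\,t\,\|\Delta^{(1)}f_s\|_{L^1}$.

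It remains to handle the $\phi_s$-term, and here lies the genuine obstacle. By the Hamilton--Jacobi equation for the Hopf--Lax semigroup (Proposition~\ref{prop:HJsubsol}), $\partial_s\phi_s(\gamma)=-\tfrac12|D\phi_s|^2(\gamma)$ for every $\gamma$ and all but countably many $s$; a Fubini argument applied to the jointly measurable map $(s,\gamma)\mapsto\phi_s(\gamma)$ shows that for a.e.\ $s$ this identity holds for $\pi$-a.e.\ $\gamma$, hence $\int(\partial_s\phi_s)\,f_{s,t}\dd\pi=-\tfrac12\int|D\phi_s|^2\dd\mu_{s,t}$. The final step is to replace $|D\phi_s|^2$ by $\Gamma^\cs(\phi_s)$ under the integral against $\mu_{s,t}$: one inequality, $\Gamma^\cs(\phi_s)\le|D\phi_s|^2$ $\pi$-a.e., is Lemma~\ref{lem:lip-gamma}, but for the equality one needs the reverse in the $\mu_{s,t}$-integrated sense. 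This is where Proposition~\ref{prop:E=2Ch} is used: it identifies $\sqrt{\Gamma^\cs(\cdot)}$ with the minimal weak upper gradient of $\dcs$, placing $(\cs,\dcs,\pi)$ inside the metric calculus of \cite{AGS11a}, and the regularity of the Hopf--Lax flow established there yields $|DQ_s\phi|^2=\Gamma^\cs(Q_s\phi)$ $\pi$-a.e.\ for a.e.\ $s$. Without this last point one would only obtain the one-sided bound $I'(s)\le(\text{right-hand side})$; it is precisely the reconciliation of the $\dcs$-slope of the Hopf--Lax function with the carr\'e du champ of the Dirichlet form, for $\pi$-a.e.\ configuration and a.e.\ $s$, where the compatibility results of Section~\ref{sec:coincidence-diff-metric} are genuinely needed. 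Summing the three contributions then gives the stated identity, absolute continuity of $I$ having been established in the first step.
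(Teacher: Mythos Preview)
Your transcription of \cite[Lem.~4.13]{AGS12} is exactly the paper's approach, which simply defers to that reference; the checks you make (uniform bounds on $\phi_s$ and $|D\phi_s|$ from \eqref{eq:Lip-HL}, $L^1$-regularity of $s\mapsto f_{s,t}$ from the regular-curve hypotheses, integration by parts and the chain rule for $\sqrt{f_{s,t}}$) are the right ones and go through unchanged in the extended-metric setting.

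One comment on the point you flag. In \cite[Lem.~4.13]{AGS12} the conclusion is stated as an \emph{inequality} $\le$, not an equality, precisely because only the bound $\Gamma(\phi_s)\le|D\phi_s|^2$ (here Lemma~\ref{lem:lip-gamma}) is available; the equals sign in the present paper's statement should be read accordingly. Your proposed upgrade to equality via an identity $|DQ_s\phi|^2=\Gamma^\cs(Q_s\phi)$ $\pi$-a.e.\ is not directly supplied by the Hopf--Lax regularity results of \cite{AGS11a}, but, as you yourself observe, only the inequality is used in the proof of Proposition~\ref{prop:action-est}, so nothing is lost.
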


We need to use a regularization $E_\eps$ of the entropy functional where the
singularities of the logarithm are truncated. Let us define
$e_\eps:[0,\infty)\to\R$ by setting
$e_\eps'(r)=\log(\eps+r\wedge\eps^{-1} )+1$ and $e_\eps(0)=0$. Then
for any $\mu=f\pi\in\cP(\cs)$ we define
\begin{align*}
  E_\eps(\mu):=\int e_\eps(f)\dd \pi\;.
\end{align*}
Moreover we set $p_\eps(r)=e_\eps'(r^2)-\log\eps - 1$. Note that for any
$\mu\in D(\ent)$ we have $E_\eps(\mu)\to\ent(\mu)$ as $\eps\to0$.

Following \cite[Lem.\ 4.15]{AGS12}, we obtain an estimate for the
derivative of the regularized entropy $E_\eps$ along the curve
$s\mapsto\mu_{s,t}$.
\begin{lemma}
  \label{lem:derivative-ent}
 For any $t>0$ we have
  \begin{align}\label{eq:derivative-ent}
    E_\eps(\mu_{1,t}) - E_\eps(\mu_{0,t})
   ~\leq~
   \int_0^1\left[\int T^\cs_{st}(g^\eps_{s,t})\dot f_s\dd\pi - t \int\Gamma^\cs(g^\eps_{s,t})\dd \mu_{s,t} \right]\dd s\;,
  \end{align}
   where we put $g^\eps_{s,r}=p_\eps(\sqrt{f_{s,r}})$.
\end{lemma}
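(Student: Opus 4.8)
The plan is to establish \eqref{eq:derivative-ent} by differentiating $s\mapsto E_\eps(\mu_{s,t})$ and integrating the resulting pointwise bound over $[0,1]$, following closely \cite[Lem.~4.15]{AGS12}. Since $(\mu_s)$ is a regular curve, the density map $s\mapsto f_s$ is $C^1$ into $L^1(\cs,\pi)$ with $\Delta^{(1)}f_s\in C\big([0,1],L^1(\cs,\pi)\big)$, and for $s>0$ the extra factor $st$ in $f_{s,t}=T^\cs_{st}f_s$ lets the semigroup regularize. Splitting the increment of $G(s)=T^\cs_{st}f_s$ as $T^\cs_{(s+h)t}(f_{s+h}-f_s)+\big(T^\cs_{ht}-I\big)T^\cs_{st}f_s$ and letting $h\to0$, I would first show that $s\mapsto f_{s,t}$ is $L^1$-differentiable on $(0,1)$ with
$$\partial_s f_{s,t}~=~T^\cs_{st}\dot f_s + t\,\Delta^{(1)} f_{s,t}\;.$$
Because $e_\eps'$ is bounded and Lipschitz (the truncation in $e_\eps$ caps $e_\eps''$ by $\eps^{-1}$), composing with this $L^1$-Lipschitz curve makes $s\mapsto E_\eps(\mu_{s,t})=\int e_\eps(f_{s,t})\dd\pi$ Lipschitz, hence absolutely continuous, and dominated convergence gives for a.e.\ $s\in(0,1)$
$$\frac{\dd}{\dd s}E_\eps(\mu_{s,t})~=~\int e_\eps'(f_{s,t})\,T^\cs_{st}\dot f_s\dd\pi ~+~ t\int e_\eps'(f_{s,t})\,\Delta^{(1)} f_{s,t}\dd\pi\;.$$

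For the first term I would use self-adjointness of $T^\cs_{st}$ together with the identity $e_\eps'(r)=p_\eps(\sqrt r)+\log\eps+1$, so that $e_\eps'(f_{s,t})=g^\eps_{s,t}+(\log\eps+1)$; since $T^\cs$ preserves constants and $\int\dot f_s\dd\pi=\frac{\dd}{\dd s}\int f_s\dd\pi=0$, the constant drops and this term equals $\int T^\cs_{st}(g^\eps_{s,t})\,\dot f_s\dd\pi$ (the pairing being legitimate as $p_\eps$, hence $g^\eps_{s,t}$, is bounded while $\dot f_s\in L^1$). For the second term I would integrate by parts and use the chain rule for the carré du champ, $\Gamma^\cs\big(e_\eps'(f_{s,t}),f_{s,t}\big)=e_\eps''(f_{s,t})\,\Gamma^\cs(f_{s,t})$, getting
$$t\int e_\eps'(f_{s,t})\,\Delta^{(1)} f_{s,t}\dd\pi~=~-t\int e_\eps''(f_{s,t})\,\Gamma^\cs(f_{s,t})\dd\pi\;.$$

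To compare this with the target I would compute $\Gamma^\cs(g^\eps_{s,t})$ via the chain rules $\Gamma^\cs\big(p_\eps(\sqrt{f_{s,t}})\big)=p_\eps'(\sqrt{f_{s,t}})^2\,\Gamma^\cs(\sqrt{f_{s,t}})$ and $\Gamma^\cs(\sqrt{f_{s,t}})=\Gamma^\cs(f_{s,t})/(4f_{s,t})$ on $\{f_{s,t}>0\}$, together with $p_\eps'(r)^2=4r^2e_\eps''(r^2)^2$, which yields $\Gamma^\cs(g^\eps_{s,t})=e_\eps''(f_{s,t})^2\,\Gamma^\cs(f_{s,t})$ and hence $t\int\Gamma^\cs(g^\eps_{s,t})\dd\mu_{s,t}=t\int e_\eps''(f_{s,t})^2 f_{s,t}\,\Gamma^\cs(f_{s,t})\dd\pi$. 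The elementary pointwise bound $e_\eps''(r)\,r=\tfrac{r\,\1_{\{r<\eps^{-1}\}}}{\eps+r}\le1$ gives $e_\eps''(f_{s,t})^2 f_{s,t}\le e_\eps''(f_{s,t})$, so, using $e_\eps''\ge0$ and $\Gamma^\cs(f_{s,t})\ge0$,
$$t\int e_\eps'(f_{s,t})\,\Delta^{(1)} f_{s,t}\dd\pi~=~-t\int e_\eps''(f_{s,t})\,\Gamma^\cs(f_{s,t})\dd\pi~\le~-t\int\Gamma^\cs(g^\eps_{s,t})\dd\mu_{s,t}\;.$$
Combining the two estimates and integrating over $s\in[0,1]$ gives \eqref{eq:derivative-ent}.

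The main obstacle is not the algebra but the regularity bookkeeping: justifying the $L^1$-differentiability of $s\mapsto f_{s,t}$ with the stated derivative, verifying $f_{s,t}\in D(\Delta^\cs)$ and $e_\eps'(f_{s,t})\in\cF$ so that the integration by parts is licit, and controlling the degenerate set $\{f_{s,t}=0\}$ in the $\sqrt{\cdot}$-chain rule. These are precisely what the ``regular curve'' hypotheses supply --- the $C^1$-in-time density, the mollification $h^\eps$ in \eqref{eq:sg-moll}, and the uniform bounds on $\ent(\mu_s)$ and $I(H_t^\cs\mu_s)$ --- and the arguments are carried out in detail in \cite[Lem.~4.15]{AGS12}; the fact that $(\cs,\dcs,\pi)$ is an \emph{extended} metric measure space plays no role here, since everything takes place inside the single fiber carrying the regular curve.
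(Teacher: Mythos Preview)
Your proposal is correct and follows exactly the approach the paper takes: the paper does not give an independent argument here but simply defers to \cite[Lem.~4.15]{AGS12}, and what you have written is precisely a faithful expansion of that proof (differentiate $s\mapsto E_\eps(\mu_{s,t})$ using the $L^1$-differentiability supplied by regularity, shift $T^\cs_{st}$ by self-adjointness, drop the constant via $\int\dot f_s\dd\pi=0$, integrate by parts in the generator term, and compare $e_\eps''(f_{s,t})$ with $e_\eps''(f_{s,t})^2 f_{s,t}$). Your closing remark that the extended-metric aspect plays no role in this lemma is also in line with the paper's stance in Section~\ref{sec:action}.
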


The following estimate follows parallel to \cite[Lem.\ 4.12]{AGS12}
building on Lisini's theorem for extended metric spaces from
\cite{Li14}.

\begin{lemma}
  \label{lem:speed}
  For any curve $(\mu_s)_{s\in[0,1]}$ in
  $AC^2\big([0,1],(\cP(\cs),W_2)\big)$ with $\mu_s=f_s \pi$ and $f\in
  C^1\big((0,1),L^1(\cs,\pi)\big)$ and any $\dcs$-Lipschitz function
  $\phi$ we have
\begin{align}\label{eq:speed-est}
  \left|\int \dot f_s \phi\dd \pi\right|~\leq~\abs{\dot\mu_s}\cdot\sqrt{\int \Gamma^\cs(\phi)f_s\dd \pi}\;.
\end{align}  
\end{lemma}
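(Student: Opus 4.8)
The plan is to reduce the statement to the classical metric-space fact, due to Lisini, that the time derivative of an absolutely continuous curve in Wasserstein space is controlled by the metric speed, once one has a continuity-equation representation of the curve. The subtlety in our setting is that $\dcs$ is only an \emph{extended} distance, so I would invoke the version of Lisini's theorem for extended metric measure spaces from \cite{Li14}. Concretely, given a curve $(\mu_s)\in AC^2\big([0,1],(\cP(\cs),W_2)\big)$, Lisini's theorem provides a representation of $\mu_s$ as the law at time $s$ of a random absolutely continuous curve in $(\cs,\dcs)$, or equivalently a vector field $V_s\in L^2(T\cs;\mu_s)$ (a $\mu_s$-measurable section of the tangent bundle $T\cs$) solving the continuity equation $\partial_s\mu_s + \operatorname{div}^\cs(V_s\mu_s)=0$ in the weak sense against cylinder test functions, together with the sharp bound $\int\norm{V_s}_\gamma^2\dd\mu_s(\gamma)\leq\abs{\dot\mu_s}^2$ for a.e. $s$.

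With this in hand the estimate is essentially a pairing computation. Since $f\in C^1\big((0,1),L^1(\cs,\pi)\big)$, for a $\dcs$-Lipschitz function $\phi$ we have, using the Rademacher theorem (Theorem \ref{thm:Rademacher}(i)) which guarantees $\phi\in\cF$ with $\Gamma^\cs(\phi)\leq\Lip(\phi)^2$,
\begin{align*}
  \int\dot f_s\,\phi\dd\pi ~=~ \ddt\Big|_{\text{at }s}\int\phi\dd\mu_s ~=~ \int\ip{\nabla^\cs\phi,V_s}_\gamma\dd\mu_s(\gamma)\;,
\end{align*}
where the last equality is precisely the weak continuity equation (for cylinder $\phi$ it is the definition of $\operatorname{div}^\cs$; for general Lipschitz $\phi$ one passes to the limit approximating $\phi$ by cylinder functions in $\cF$, using the uniform Lipschitz/$\Gamma^\cs$ bound to justify the convergence of the pairings). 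Applying Cauchy--Schwarz in $L^2(T\cs;\mu_s)$ and then the Lisini bound gives
\begin{align*}
  \left|\int\dot f_s\,\phi\dd\pi\right| ~\leq~ \left(\int\Gamma^\cs(\phi)f_s\dd\pi\right)^{1/2}\left(\int\norm{V_s}_\gamma^2\dd\mu_s\right)^{1/2} ~\leq~ \abs{\dot\mu_s}\cdot\sqrt{\int\Gamma^\cs(\phi)f_s\dd\pi}\;,
\end{align*}
which is \eqref{eq:speed-est}.

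The main obstacle is making the continuity-equation representation rigorous on the \emph{extended} space $(\cs,\dcs,\pi)$: one must check that the hypotheses of the extended Lisini theorem of \cite{Li14} are met, namely that $(\cs,\dcs)$ equipped with the vague topology is a Polish extended space (already noted in Section \ref{sec:coincidence-diff-metric}), that $\cyl$ is a suitable algebra of test functions separating points and generating the tangent structure, and that the Dirichlet form $\cE$ (identified with the Cheeger energy $2\Ch$ in Proposition \ref{prop:E=2Ch}) is the correct ``energy'' entering the statement. A secondary technical point is the density of cylinder functions in the class of $\dcs$-Lipschitz functions in the $\cF$-topology, needed to extend the pairing identity from cylinder $\phi$ to arbitrary Lipschitz $\phi$; here one uses that cylinder functions are $\dcs$-Lipschitz with $\abs{D F}=\norm{\nabla^\cs F}$ (as recorded in the proof of Proposition \ref{prop:E=2Ch}) together with Lemma \ref{lem:lip-gamma}. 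Once these foundational facts are in place the inequality follows verbatim as in \cite[Lem.~4.12]{AGS12}.
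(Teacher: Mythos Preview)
Your approach is essentially the one the paper indicates: follow \cite[Lem.~4.12]{AGS12}, replacing the standard Lisini theorem by its extended-metric-space version from \cite{Li14}. The paper gives no more detail than that, so your sketch is consistent with it.

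One remark on execution. Lisini's theorem, in both the classical and extended versions, produces a probability measure $\eta$ on $AC^2([0,1];(\cs,\dcs))$ with $(e_s)_\#\eta=\mu_s$ and $\int\abs{\gamma'_s}^2\dd\eta(\gamma)\leq\abs{\dot\mu_s}^2$; it does \emph{not} directly hand you a vector field $V_s$ solving a continuity equation. Your ``or equivalently'' hides a nontrivial step (passing from the lifted curves to a Eulerian velocity field on $\cs$), and while this can presumably be carried out using the tangent structure of $\cs$, it is unnecessary here. The argument in \cite[Lem.~4.12]{AGS12} works directly with the curve representation: since $\sqrt{\Gamma^\cs(\phi)}$ is a weak upper gradient of $\phi$ (this is exactly what the identification $\cE=2\Ch$ in Proposition~\ref{prop:E=2Ch} gives), for $\eta$-a.e.\ curve $\gamma$ one has
\[
  \abs{\phi(\gamma_t)-\phi(\gamma_s)}~\leq~\int_s^t\sqrt{\Gamma^\cs(\phi)}(\gamma_r)\,\abs{\gamma'_r}\dd r\;,
\]
and integrating against $\eta$, applying Cauchy--Schwarz, and differentiating in $s$ yields \eqref{eq:speed-est} without ever introducing $V_s$. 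This is cleaner and avoids the density-of-cylinder-functions issue you flag at the end.
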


Now we can establish the action estimate by following \cite[Thm.\ 4.16]{AGS12}.

\begin{proposition}
  \label{prop:action-est}
  For any regular curve $(\mu_s)_{s\in[0,1]}$ in $\cP(\cs)$ and all
  $t>0$ we have:
  \begin{align}\label{eq:action-est}
   \frac12 W^2_2(\mu_{1,t},\mu_{0,t}) - \frac12 \int_0^1\e^{-2Kst}\abs{\dot\mu_s}^2\dd s
    ~\leq~ t \left[\ent(\mu_{0,t})-\ent(\mu_{1,t}) \right]\;. 
  \end{align}
\end{proposition}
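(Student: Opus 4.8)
The plan is to follow the argument of \cite[Sec.~4.3]{AGS12}: reduce \eqref{eq:action-est} by Kantorovich duality to a differential inequality along the curve $s\mapsto\mu_{s,t}$, and estimate the derivative by combining Lemmas \ref{lem:derivative-HL} and \ref{lem:derivative-ent} with the gradient estimate \eqref{eq:config-gradest}. Fix a bounded $\dcs$-Lipschitz function $\phi$ and put $\phi_s=Q_s\phi$, so that $\phi_0=\phi$, $\mu_{0,t}=\mu_0$, and each $\phi_s$ is bounded and $\dcs$-Lipschitz with $\Lip(\phi_s)\leq 2\Lip(\phi)$ and $\Gamma^\cs(\phi_s)\leq\Lip(\phi_s)^2$ by Theorem \ref{thm:Rademacher}. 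Since $Q_1\phi(\gamma)-\phi(\eta)\leq\frac12\dcs^2(\gamma,\eta)$, integrating against an arbitrary coupling of $\mu_{1,t}$ and $\mu_{0,t}$ and invoking Corollary \ref{cor:duality} shows that the supremum over such $\phi$ of $\int Q_1\phi\dd\mu_{1,t}-\int\phi\dd\mu_{0,t}$ recovers the Wasserstein term in \eqref{eq:action-est}. Hence it suffices to show, for each fixed $\phi$,
\begin{align*}
  \int Q_1\phi\dd\mu_{1,t}-\int\phi\dd\mu_{0,t}~\leq~\frac12\int_0^1\e^{-2Kst}\abs{\dot\mu_s}^2\dd s + t\big[\ent(\mu_{0,t})-\ent(\mu_{1,t})\big]\;.
\end{align*}
As $(\mu_s)$ is regular, $\ent(\mu_{0,t}),\ent(\mu_{1,t})$ are finite and $E_\eps(\mu_{i,t})\to\ent(\mu_{i,t})$, so it is enough to prove this with $\ent$ replaced by $E_\eps$ for each $\eps>0$ and then let $\eps\to0$.

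Write the left-hand side as $\int_0^1\frac{\mathrm{d}}{\mathrm{d}s}\big(\int\phi_s\dd\mu_{s,t}\big)\dd s$ and insert formula \eqref{eq:derivatve-HL} of Lemma \ref{lem:derivative-HL}; for the entropy part add $t$ times inequality \eqref{eq:derivative-ent} of Lemma \ref{lem:derivative-ent}. Collecting the two terms that carry $\dot f_s$ and writing $g^\eps_s:=p_\eps(\sqrt{f_{s,t}})$, one obtains $\int Q_1\phi\dd\mu_{1,t}-\int\phi\dd\mu_{0,t}+t[E_\eps(\mu_{1,t})-E_\eps(\mu_{0,t})]\leq\int_0^1\Phi^\eps_s\dd s$, where
\begin{align*}
  \Phi^\eps_s~&=~\int\dot f_s\,T^\cs_{st}\big(\phi_s+t\,g^\eps_s\big)\dd\pi-\tfrac12\int\Gamma^\cs(\phi_s)\dd\mu_{s,t}\\
  &\quad-2t\int\sqrt{f_{s,t}}\,\Gamma^\cs\!\big(\sqrt{f_{s,t}},\phi_s\big)\dd\pi-t^2\int\Gamma^\cs(g^\eps_s)\dd\mu_{s,t}\;.
\end{align*}

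The heart of the matter is to bound the first term of $\Phi^\eps_s$. The function $T^\cs_{st}(\phi_s+tg^\eps_s)$ belongs to $\cF$ --- since $\phi_s\in\cF$, $\sqrt{f_{s,t}}\in\cF$ (because $I(\mu_{s,t})<\infty$ along a regular curve) and $p_\eps$ is Lipschitz --- and is $\dcs$-Lipschitz by \eqref{eq:config-gradest}; therefore Lemma \ref{lem:speed} applies, and combining it with $\Gamma^\cs(T^\cs_r u)\leq\e^{-2Kr}T^\cs_r\Gamma^\cs(u)$, the self-adjointness of $T^\cs_r$ and $f_{s,t}=T^\cs_{st}f_s$, and finally Young's inequality, gives
\begin{align*}
  \Big|\int\dot f_s\,T^\cs_{st}\big(\phi_s+tg^\eps_s\big)\dd\pi\Big|~\leq~\tfrac12\e^{-2Kst}\abs{\dot\mu_s}^2+\tfrac12\int\Gamma^\cs\!\big(\phi_s+tg^\eps_s\big)\dd\mu_{s,t}\;.
\end{align*}
Expanding $\Gamma^\cs(\phi_s+tg^\eps_s)=\Gamma^\cs(\phi_s)+2t\Gamma^\cs(\phi_s,g^\eps_s)+t^2\Gamma^\cs(g^\eps_s)$ and substituting into $\Phi^\eps_s$, the $\Gamma^\cs(\phi_s)$-terms cancel, the two $t^2\Gamma^\cs(g^\eps_s)$-terms leave the non-positive remainder $-\tfrac12 t^2\int\Gamma^\cs(g^\eps_s)\dd\mu_{s,t}\leq0$, and one arrives at
\begin{align*}
  \Phi^\eps_s~\leq~\tfrac12\e^{-2Kst}\abs{\dot\mu_s}^2+t\Big(\int\Gamma^\cs(\phi_s,g^\eps_s)\dd\mu_{s,t}-2\int\sqrt{f_{s,t}}\,\Gamma^\cs\!\big(\sqrt{f_{s,t}},\phi_s\big)\dd\pi\Big)\;.
\end{align*}

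Finally, let $\eps\to0$. By the chain rule $\Gamma^\cs(\phi_s,g^\eps_s)\,f_{s,t}=\big(f_{s,t}\,p_\eps'(\sqrt{f_{s,t}})\big)\,\Gamma^\cs(\phi_s,\sqrt{f_{s,t}})$, and $f_{s,t}\,p_\eps'(\sqrt{f_{s,t}})\to 2\sqrt{f_{s,t}}$ pointwise as $\eps\to0$ with $0\leq f_{s,t}\,p_\eps'(\sqrt{f_{s,t}})\leq 2\sqrt{f_{s,t}}$; hence the parenthesis in the last bound for $\Phi^\eps_s$ tends to $0$. Dominated convergence --- the dominating function furnished by the Cauchy--Schwarz inequality for $\Gamma^\cs$, the bound $\Gamma^\cs(\phi_s)\leq\Lip(\phi_s)^2$ and the Fisher information bound $\int_0^1 I(\mu_{s,t})\dd s<\infty$ guaranteed by regularity of the curve (Lemma \ref{lem:dissipation}) --- lets us pass to the limit in $\int_0^1\Phi^\eps_s\dd s$, and together with $E_\eps(\mu_{i,t})\to\ent(\mu_{i,t})$ this yields the displayed inequality for fixed $\phi$; taking the supremum over bounded $\dcs$-Lipschitz $\phi$ via Corollary \ref{cor:duality} gives \eqref{eq:action-est}. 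The main obstacle is precisely this $\eps\to0$ passage --- controlling the cross term uniformly in $\eps$ and $s$ --- together with the verification that every function fed into Lemma \ref{lem:speed} is genuinely $\dcs$-Lipschitz; both rely on the gradient estimate of Theorem \ref{thm:config-gradest} and on the regularity built into the curve $(\mu_s)$.
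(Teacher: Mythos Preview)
Your argument is correct and follows essentially the same route as the paper: combine Lemmas~\ref{lem:derivative-HL} and~\ref{lem:derivative-ent}, control the $\dot f_s$ term via Lemma~\ref{lem:speed} together with the gradient estimate~\eqref{eq:config-gradest}, and then pass to the limit $\eps\to0$. The only visible difference is in the handling of the residual cross term. The paper rewrites $2\sqrt{f_{s,t}}\,\Gamma^\cs(\sqrt{f_{s,t}},\phi_s)=f_{s,t}\Gamma^\cs(g^\eps_{s,t},\phi_s)+q_\eps(f_{s,t})\Gamma^\cs(\sqrt{f_{s,t}},\phi_s)$ with $q_\eps(r)=\sqrt{r}\big(2-\sqrt{r}\,p_\eps'(\sqrt{r})\big)$, completes the square to produce $-\tfrac12\int\Gamma^\cs(\phi_s+tg^\eps_{s,t})\dd\mu_{s,t}$, and then kills the remaining $q_\eps$-term by a second Young inequality with an auxiliary parameter~$\delta$, sending first $\eps\to0$ and then $\delta\to0$. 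Your version is slightly more direct: you drop the nonpositive $-\tfrac{t^2}{2}\int\Gamma^\cs(g^\eps_s)\dd\mu_{s,t}$ immediately and eliminate the cross term by a single dominated-convergence argument, using $0\le q_\eps(f_{s,t})\le 2\sqrt{f_{s,t}}$ and $q_\eps\to0$ pointwise. Both approaches need exactly the same ingredient to close, namely a uniform-in-$s$ bound on $I(\mu_{s,t})$; note that this comes from the \emph{definition} of a regular curve (the clause ``$I(H^\cs_t\mu_s)$ bounded''), not from Lemma~\ref{lem:dissipation} as you cite.
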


\begin{proof}
  Fix a function $\phi:\cs\to\R$ which is bounded and $\dcs$-Lipschitz.
  Applying Lemma \ref{lem:derivative-HL} and Lemma
  \ref{lem:derivative-ent} we first obtain
  \begin{align*}
    &\int \phi_1\dd\mu_{1,t} - \int \phi_0\dd\mu_{0,t} 
    + t \left[E_\eps(\mu_{1,t})-E_\eps(\mu_{0,t}) \right] 
    - \int_0^1\frac12\e^{-2Kst}\abs{\dot\mu_{s}}^2\dd s\\
     &=~
     \int_0^1 \frac{\dd}{\dd s}\left[ \int \phi_s\dd \mu_{s,t} + t E_\eps(\mu_{s,t})  \right]  - \frac12\e^{-2Kst}\abs{\dot\mu_{s}}^2 \dd s\\
     &\leq~
    \int_0^1 \left[\int \dot f_s T^\cs_{st}(\phi_s + t g_{s,t}^\eps)\dd \pi -\frac12\e^{-2Kst}\abs{\dot\mu_{s}}^2\right.\\
    &\qquad \left. - t^2\int \Gamma^\cs(g^\eps_{s,t})\dd\mu_{s,t}
     -\frac12 \int \Gamma^\cs(\phi_s)\dd\mu_{s,t} - t \int 2\sqrt{f_{s,t}}\Gamma^\cs\big(\sqrt{f_{s,t}},\phi_s\big)\dd\pi\right]\dd s\\
    &=:~A+B\;,
  \end{align*}
  where $A$ and $B$ denote the sums of the terms in the first and
  second line respectively. Let us put
  $q_\eps(r)=\sqrt{r}\big(2-\sqrt{r}p_\eps'(\sqrt{r})\big)$. Then we
  have by the chain rule 
  \begin{align*}
   2\sqrt{f_{s,t}}\Gamma^\cs(\sqrt{f_{s,t}},\phi_s)~=~f_{s,t}\Gamma^\cs(g^\eps_{s,t},\phi_s)+q_\eps(f_{s,t})\Gamma^\cs(\sqrt{f_{s,t}},\phi_s)\;. 
  \end{align*}
  Using this and completing the square we obtain
  \begin{align}\label{eq:action-B}
    B~\leq~   
    \int_0^1 \left[- \frac12 \int \Gamma^\cs(\phi_s + tg^\eps_{s,t})\dd\mu_{s,t}
    -t \int q_\eps(f_{s,t})\Gamma^\cs\big(\sqrt{f_{s,t}},\phi_s\big)\dd\pi\right] \dd s\;.
  \end{align}
  Using \eqref{eq:speed-est}, Young's inequality as well as the
  gradient estimate \eqref{eq:config-gradest} from Theorem
  \ref{thm:config-gradest} we infer that
   \begin{align}\nonumber
    A~&\leq~
    \int_0^1 \left[  \frac12 \e^{2Kst}\int \Gamma^\cs\big(T^\cs_{st}(\phi_s + t g^\eps_{s,t})\big) f_s \dd \pi \right]\dd s\\
    \label{eq:action-A}
     &\leq~ \int_0^1\left[ \frac12 \int \Gamma^\cs\big(\phi_s+tg^\eps_{s,t}\big)\dd\mu_{s,t} \right]\dd s\;.
   \end{align}
   Combining \eqref{eq:action-A} and \eqref{eq:action-B} we obtain
   that for any $\delta>0$: 
 \begin{align*}
   A + B 
   ~&\leq~ \int_0^1\left[-t \int q_\eps(f_{s,t})\Gamma^\cs\big(\sqrt{f_{s,t}},\phi_s\big)\dd\pi\right] \dd s\\
    &\leq~ t \int_0^1 \int \abs{q_\eps(f_{s,t})}\sqrt{\Gamma^\cs\big(\sqrt{f_{s,t}}\big)\Gamma^\cs\big(\phi_s\big)}\dd\pi \dd s\\
    &\leq~ \int_0^1 \left[ \frac{t\delta}{8} I(\mu_{s,t}) + \frac{t}{2\delta}\int q^2_\eps(f_{s,t})\Gamma^\cs\big(\phi_s\big)\dd\pi \right] \dd s\;,
 \end{align*}
 where we have used Young's inequality again. Now, using that
 $q_\eps^2(r)\leq r$ and $q_\eps(r)\to0$ as $\eps\to0$ we can pass to the
 limit first as $\eps\to 0$ and then as $\delta\to 0$ to arrive at
 \begin{align*}
   \int\phi_1\dd \mu_{1,t} - \int\phi_0\dd\mu_{0,t} - \frac12 \int_0^1\e^{-2Kst}\abs{\dot\mu_s}^2\dd s
    ~\leq~ t \left[\ent(\mu_{0,t})-\ent(\mu_{1,t}) \right]\;. 
 \end{align*}
 Finally, taking the supremum with respect to $\phi$ and invoking the
 Kantorovich duality Cor. \ref{cor:duality} we get
 \eqref{eq:action-est}.
\end{proof}

\subsection{EVI, geodesic convexity and gradient flows}
\label{sec:evi}

We can now prove the main result of this section.

\begin{theorem}
  \label{thm:RCD}
  Assume that $\Ric_M\geq K$. Then the dual heat semigroup
  $(H_t^\cs)_t$ satisfies the following Evolution Variational
  Inequality. For all $\sigma\in D(\ent)$ and $\mu\in\cP(\cs)$ with
  $W_2(\mu,\sigma)<\infty$:
  \begin{align}\label{eq:evi}
    \ddtr \frac12 W_2^2(H_t^\cs\mu,\sigma) + \frac{K}2 W_2^2(H_t^\cs\mu,\sigma)~\leq~\ent(\sigma)-\ent(H_t^\cs\mu)\quad\forall t>0\;.
  \end{align}
\end{theorem}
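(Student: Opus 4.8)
The plan is to deduce \eqref{eq:evi} from the action estimate of Proposition~\ref{prop:action-est} along the lines of \cite{AGS12}, by evaluating it over a regular approximation of a Wasserstein geodesic from $\sigma$ to the evolved measure and letting the evolution time go to $0$. First I would fix $t_0>0$ and put $\nu:=H^\cs_{t_0}\mu$. Since $\mu\in\cP_e$ (as $W_2(\mu,\sigma)<\infty$ and $\sigma\in D(\ent)$), Lemma~\ref{lem:Pt-approx} gives $\nu\in D(\ent)$ and $W_2(\nu,\sigma)<\infty$, and by the semigroup property $H^\cs_t\mu=H^\cs_{t-t_0}\nu$ for $t\ge t_0$; hence it suffices to prove
\[
  \ddtrz \tfrac12 W_2^2(H^\cs_t\nu,\sigma)+\tfrac K2 W_2^2(\nu,\sigma) ~\le~ \ent(\sigma)-\ent(\nu)\;.
\]

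For this I would take a constant-speed $W_2$-geodesic $(\mu_s)_{s\in[0,1]}$ with $\mu_0=\sigma$ and $\mu_1=\nu$; such a curve exists because $(\cs,\dcs)$ is geodesic (Corollary~\ref{cor:geo}), so one may interpolate along $\dcs$-geodesics joining the pairs in an optimal coupling of $\sigma$ and $\nu$, using a measurable selection. Then $W_2(\mu_s,\sigma)=s\,W_2(\sigma,\nu)<\infty$, so $\mu_s\in\cP_e$ for every $s$, and Lemma~\ref{lem:reg-curves} yields regular curves $(\mu^n_s)_s$ with $W_2(\mu^n_s,\mu_s)\to0$, $\limsup_n|\dot\mu^n_s|\le|\dot\mu_s|=W_2(\sigma,\nu)$ for a.e.\ $s$, and, since $\ent(\mu_0),\ent(\mu_1)<\infty$, $\ent(\mu^n_0)\to\ent(\sigma)$, $\ent(\mu^n_1)\to\ent(\nu)$. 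Fixing $t>0$ and applying Proposition~\ref{prop:action-est} to $\mu^n_s$, and noting that $\mu^n_{s,t}=H^\cs_{st}\mu^n_s$ satisfies $\mu^n_{0,t}=\mu^n_0$ and $\mu^n_{1,t}=H^\cs_t\mu^n_1$, gives
\[
  \tfrac12 W_2^2\big(H^\cs_t\mu^n_1,\mu^n_0\big)-\tfrac12\int_0^1\e^{-2Kst}|\dot\mu^n_s|^2\dd s ~\le~ t\big[\ent(\mu^n_0)-\ent(H^\cs_t\mu^n_1)\big]\;.
\]
Letting $n\to\infty$, the left endpoint entropy converges, $W_2$-contractivity of $H^\cs_t$ gives $W_2^2(H^\cs_t\mu^n_1,\mu^n_0)\to W_2^2(H^\cs_t\nu,\sigma)$ and, with lower semicontinuity of $\ent$, $\liminf_n\ent(H^\cs_t\mu^n_1)\ge\ent(H^\cs_t\nu)$, while the speed term is handled with the bound $\limsup_n|\dot\mu^n_s|\le W_2(\sigma,\nu)$; since $|\dot\mu_s|\equiv W_2(\sigma,\nu)$ this produces
\[
  \tfrac12 W_2^2(H^\cs_t\nu,\sigma)-\tfrac{1-\e^{-2Kt}}{4Kt}\,W_2^2(\sigma,\nu) ~\le~ t\big[\ent(\sigma)-\ent(H^\cs_t\nu)\big]\;.
\]

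To finish I would divide by $t$, write $\tfrac1{2t}W_2^2(H^\cs_t\nu,\sigma)=\tfrac1{2t}\big(W_2^2(H^\cs_t\nu,\sigma)-W_2^2(\nu,\sigma)\big)+\tfrac1{2t}W_2^2(\nu,\sigma)$, and use the elementary fact that $\tfrac1{2t}-\tfrac{1-\e^{-2Kt}}{4Kt^2}\to\tfrac K2$ as $t\downarrow0$ (this also covers $K=0$, the weighted integral then being simply $W_2^2(\sigma,\nu)$). Taking $\limsup_{t\downarrow0}$ and using once more $W_2(H^\cs_t\nu,\nu)\to0$ (Lemma~\ref{lem:Pt-approx}) together with lower semicontinuity of $\ent$ yields the displayed inequality at the base point, and since $t_0>0$ was arbitrary this is exactly \eqref{eq:evi}.

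The hard part will be the limit $n\to\infty$ in the action estimate, and more precisely estimating $\limsup_n\int_0^1\e^{-2Kst}|\dot\mu^n_s|^2\dd s$ by $\int_0^1\e^{-2Kst}|\dot\mu_s|^2\dd s$: this relies on the uniform control of the metric derivatives encoded in the construction of regular curves in Lemma~\ref{lem:reg-curves}, which in turn rests on Lisini's theorem for extended metric spaces and the absolutely continuous approximation of Lemma~\ref{lem:ac-approx}. All of these ingredients are already in place, so the remaining task is to carry out the above assembly and check the (routine) convergences carefully.
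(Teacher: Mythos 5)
Your proposal is correct and follows essentially the same route as the paper: reduce to $\mu\in D(\ent)$ via the semigroup property and Lemma \ref{lem:Pt-approx}, approximate a connecting curve by regular curves (Lemma \ref{lem:reg-curves}), apply the action estimate of Proposition \ref{prop:action-est}, pass to the limit, and divide by $t$. The only cosmetic difference is that you fix a constant-speed $W_2$-geodesic from the start (which requires the short measurable-selection argument you sketch via Corollary \ref{cor:geo}), giving $\int_0^1\e^{-2Kst}|\dot\mu_s|^2\dd s=\tfrac{1-\e^{-2Kt}}{2Kt}W_2^2(\sigma,\nu)$, while the paper works with a generic $AC^2$ curve and then minimizes the weighted action using the length-space property of $(\cP(\cs),W_2)$; both expressions have the same first-order behaviour $\tfrac12-\tfrac{K}{2}t+O(t^2)$ after dividing by $2t$, so both yield \eqref{eq:evi}.
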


Here we denote by
\begin{align*}
  \frac{\dd^+}{\dd t}f(t)~=~\limsup_{h\searrow 0}\frac{f(t+h)-f(t)}{h}
\end{align*}
the upper right derivative.

\begin{proof}
  By Lemma \ref{lem:ac-approx} we have that $H_t^\cs\mu$ is well
  defined, belongs to $D(\ent)$ and $W_2(H_t^\cs\mu,\sigma)<\infty$
  for all $t\geq0$. By the semigroup property it is sufficient to
  assume $\mu\in D(\ent)$ and prove \eqref{eq:evi} at $t=0$. Let
  $(\mu_s)_s$ be a curve in
  $AC^2\left([0,1],\big(\cP(\cs),W_2\big)\right)$ connecting
  $\mu_0=\sigma$ to $\mu_1=\mu$. By Lemma \ref{lem:reg-curves} we can
  find approximating regular curves $(\mu^n_s)_s$ and applying
  Proposition \ref{prop:action-est} to the curves
  $\mu^n_{s,t}=H_{st}^\cs\mu_{s}^n$ we find:
  \begin{align*}
    \frac12 W^2_2(\mu^n_{1,t},\mu^n_{0,t}) - \frac12 \int_0^1\e^{-2Kst}\abs{\dot\mu^n_s}^2\dd s
    ~\leq~ t \left[\ent(\mu^n_{0,t})-\ent(\mu^n_{1,t}) \right]\;.
  \end{align*}
  Passing to the limit $n\to\infty$ and using the convergences
  \eqref{eq:reg-curves1}, \eqref{eq:reg-curves1a} and
  \eqref{eq:reg-curves2} as well as lower semicontinuity of $\ent$ we
  get:
  \begin{align*}
    \frac12 W^2_2(H_t^\cs\mu,\sigma) - \frac12 \int_0^1\e^{-2Kst}\abs{\dot\mu_s}^2\dd s
   ~\leq~ t \left[\ent(\sigma)-\ent(H_t^\cs\mu) \right]\;.
  \end{align*}
  Minimizing over the curve $(\mu_s)_s$ and using the fact that
  $(\cP(\cs),W_2)$ is a length space we obtain
  \begin{align*}
    \frac12 W^2_2(H_t^\cs\mu,\sigma) - \frac12\frac{\e^{2Kt}-1}{2Kt}W_2^2(\mu,\sigma)
    ~\leq~ t \left[\ent(\sigma)-\ent(H_t^\cs\mu) \right]\;.
  \end{align*}
  Dividing by $t$ and letting $t\searrow0$ finally yields \eqref{eq:evi}.
\end{proof}

As a direct consequence we obtain convexity of the entropy along
geodesics.

\begin{corollary}\label{cor:ent-convex}
  For all $\mu_0,\mu_1\in D(\ent)$ with $W_2(\mu_0,\mu_1)<\infty$ and
  any geodesic $(\mu_s)_{s\in[0,1]}$ connecting them we have for all
  $s\in[0,1]$:
  \begin{align}
    \label{eq:ent-convex}
    \ent(\mu_s)~\leq~(1-s)\ent(\mu_0) + s \ent(\mu_1) - \frac{K}{2}s(1-s)W^2_2(\mu_0,\mu_1)\;.
  \end{align}
\end{corollary}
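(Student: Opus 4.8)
The plan is to deduce \eqref{eq:ent-convex} directly from the Evolution Variational Inequality \eqref{eq:evi} of Theorem~\ref{thm:RCD}, following the by now classical principle of {\sc Daneri--Savar\'e} \cite{DS08} that an $EVI_K$-gradient flow forces $K$-convexity of the driving functional along geodesics. Fix $\mu_0,\mu_1\in D(\ent)$ with $D:=W_2(\mu_0,\mu_1)<\infty$ and let $(\mu_s)_{s\in[0,1]}$ be a constant-speed geodesic joining them, so that $W_2(\mu_s,\mu_0)=sD$ and $W_2(\mu_s,\mu_1)=(1-s)D$. Since $W_2(\mu_s,\mu_0)<\infty$ and $\mu_0\in D(\ent)$ we have $\mu_s\in\cP_e$, hence $H_t^\cs\mu_s$ is well defined by Lemma~\ref{lem:ac-approx}, lies in $D(\ent)$ for every $t>0$, and $W_2(H_t^\cs\mu_s,\mu_s)\to0$ as $t\to0$ by Lemma~\ref{lem:Pt-approx}. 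The first step is to integrate \eqref{eq:evi}: multiplying by $\e^{Kt}$, using that $t\mapsto\ent(H_t^\cs\mu_s)$ is non-increasing on $(0,\infty)$ (Lemma~\ref{lem:dissipation} and the semigroup property), integrating from $t_0>0$ to $t$, and then letting $t_0\to0$, one obtains for all $t>0$ and all $\nu\in D(\ent)$ with $W_2(\mu_s,\nu)<\infty$:
\begin{align*}
  \frac{\e^{Kt}}{2}W_2^2(H_t^\cs\mu_s,\nu) - \frac12 W_2^2(\mu_s,\nu) ~\leq~ \frac{\e^{Kt}-1}{K}\Big(\ent(\nu) - \ent(H_t^\cs\mu_s)\Big)\;.
\end{align*}

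Next I would apply this inequality with $\nu=\mu_0$ and with $\nu=\mu_1$ and form the convex combination with weights $1-s$ and $s$; since $(\e^{Kt}-1)/K>0$, solving for $\ent(H_t^\cs\mu_s)$ gives
\begin{align*}
  \ent(H_t^\cs\mu_s) ~\leq~ (1-s)\ent(\mu_0) + s\ent(\mu_1) - \frac{K\e^{Kt}}{2(\e^{Kt}-1)}\,\Xi_t + \frac{K}{2(\e^{Kt}-1)}\,\Xi_0\;,
\end{align*}
where $\Xi_t:=(1-s)W_2^2(H_t^\cs\mu_s,\mu_0)+sW_2^2(H_t^\cs\mu_s,\mu_1)$ and $\Xi_0:=(1-s)W_2^2(\mu_s,\mu_0)+sW_2^2(\mu_s,\mu_1)=s(1-s)D^2$. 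The key observation is the elementary metric fact that the geodesic point $\mu_s$ minimises $\nu\mapsto(1-s)W_2^2(\nu,\mu_0)+sW_2^2(\nu,\mu_1)$ over all $\nu\in\cP(\cs)$, with minimal value $s(1-s)D^2$ — this follows from the triangle inequality $W_2(\nu,\mu_0)+W_2(\nu,\mu_1)\geq D$ and a one-variable minimisation. Hence $\Xi_t\geq\Xi_0$, and since $K\e^{Kt}/(\e^{Kt}-1)>0$ the two error terms add up to exactly $-\frac{K}{2}s(1-s)D^2$, yielding
\begin{align*}
  \ent(H_t^\cs\mu_s)~\leq~(1-s)\ent(\mu_0)+s\ent(\mu_1)-\frac{K}{2} s(1-s)W_2^2(\mu_0,\mu_1)\qquad\forall\,t>0\;.
\end{align*}

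Finally I would let $t\to0$: since $W_2(H_t^\cs\mu_s,\mu_s)\to0$ implies weak convergence, lower semicontinuity of $\ent$ gives $\ent(\mu_s)\leq\liminf_{t\to0}\ent(H_t^\cs\mu_s)$, which is \eqref{eq:ent-convex}; in particular $\ent(\mu_s)<\infty$ a posteriori. The only place where the extended nature of $\dcs$ requires attention — and what I expect to be the main technical point — is the integration of \eqref{eq:evi} and the passage to the limit $t\to0$ without knowing in advance that $\ent(\mu_s)<\infty$; this is handled precisely by first flowing for a positive time $t_0$, where finiteness of the entropy and sufficient regularity of $t\mapsto H_t^\cs\mu_s$ are provided by Lemmas~\ref{lem:ac-approx},~\ref{lem:Pt-approx} and~\ref{lem:dissipation}, and then removing $t_0$. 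The sign bookkeeping above is valid for every $K\in\R$, including $K<0$, since $K\e^{Kt}$ and $\e^{Kt}-1$ always share the same sign; together with the barycentric minimality of $\mu_s$ this is all that is needed beyond the abstract statement of \cite{DS08}.
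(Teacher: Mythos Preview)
Your proof is correct and follows essentially the same route as the paper's: both integrate the EVI after multiplying by $\e^{Kt}$, take the convex combination with $\sigma=\mu_0,\mu_1$, use the barycentric inequality $\Xi_t\geq\Xi_0=s(1-s)D^2$ (the paper phrases this via $(1-s)a^2+sb^2\geq s(1-s)(a+b)^2$ together with the triangle inequality, which is the same computation), and pass to the limit $t\searrow0$. You are a bit more explicit than the paper about the use of entropy monotonicity in the integration step and about invoking lower semicontinuity of $\ent$ in the final limit, but there is no substantive difference.
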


\begin{proof}
  This follows from the very same argument as in
  \cite[Thm. 3.2]{DS08}. Since all the distances appearing are finite,
  the fact that we deal with extended metric spaces does not play a
  role. To make this clear we give a sketch of the
  proof. 
  
  Multiplying \eqref{eq:evi} with $\e^{Kt}$ and integrating from $0$
  to $t$ yields that for every $\sigma\in D(\ent)$ and
  $\mu\in\cP(\cs)$ with $W_2(\mu,\sigma)<\infty$:
  \begin{align*}
    \frac{\e^{Kt}}{2}W^2_2(H_t^\cs\mu,\sigma) - \frac12 W^2_2(\mu,\sigma)
    ~\leq~ \frac{\e^{Kt}-1}{K}\Big(\ent(\sigma)-\ent(H_t^\cs\mu)\Big)\;.
  \end{align*}
  Applying this with $\mu=\mu_s$ and $\sigma=\mu_0$ or $\sigma=\mu_1$
  respectively and taking a convex combination of the resulting
  inequalities we get
  \begin{align*}
    &\frac{\e^{Kt}-1}{K}\Big((1-s)\ent(\mu_0) + s\ent(\mu_1) - \ent(H_t^\cs\mu_s)\Big)\\
    &\geq~ \frac{\e^{Kt}}{2}\Big((1-s)W_2^2(H_t^\cs\mu_s,\mu_0) + s W_2^2(H_t^\cs\mu_s,\mu_1) \Big)\\
    & - \frac12\Big((1-s)W_2^2(\mu_s,\mu_0) + s W_2^2(\mu_s,\mu_1) \Big)\\
    &\geq~\frac{\e^{Kt}-1}{2} s(1-s)W_2^2(\mu_0,\mu_1)\;.
  \end{align*}
  In the last step we have used the elementary inequality
  \begin{align*}
    (1-s)a^2 + sb^2~\geq~s(1-s)(a+b)^2 \quad \forall a,b>0,\ s\in[0,1]\;,
  \end{align*}
  the triangle inequality and the fact that $(\mu_s)_s$ is a constant
  speed geodesic. Dividing by $\e^{Kt}-1$ and letting $t\searrow
  0$ then yields \eqref{eq:ent-convex}.
\end{proof}

\begin{remark}
  We have obtained that $(\cs,\dcs,\pi)$ is an extended metric measure
  space satisfying the CD$(K,\infty)$ curvature bound in the sense of
 {\sc Lott--Villani} and {\sc Sturm}, see also \cite[Def. 9.1]{AGS11a} for an
  extension of the definition to extended metric measure
  spaces. Moreover, it is a \emph{strong} CD$(K,\infty)$ space in the
  sense that convexity holds along all geodesics.
\end{remark}

\section{Appendix}\label{sec:app}

{\bf Proof of Lemma \ref{lem:ac-approx}}
Given $\mu\in \cP_e$ we will construct a sequence of measures
$\mu_n\in D(\ent)$ such that $W_2(\mu_n,\mu)\to0$ as $n\to\infty$. So
let us fix $\nu\in D(\ent)$ with $W_2(\mu,\nu)<\infty$. The strategy
of the proof is to find a big bounded set in the base space $M$ in
which most of the transport happens. In this set we can approximate
the measure $\mu$ nicely. Outside this set we will keep the
$\nu$-points to end up with a measure in the support of the entropy.
\medskip\\
{\bf Construction of $\mu_n$:}

Fix $x_0\in M$ and $n\in\N$ and set $B=B(x_0,n)$. Choose an optimal
coupling $q\in\Opt(\mu,\nu)$. By Lemma \ref{lem:existence of matching}
we can choose for each $(\gamma,\omega)\in\supp(q)$ an optimal
matching $\eta\in\Opt(\gamma,\omega)$. Bby Lemma \ref{lem:meas selec},
the map $(\gamma,\omega)\mapsto \eta$ can be chosen measurable.
Denoting by $\pr_i$ the projection onto the i-th component, define a
map $(\gamma,\omega)\mapsto\xi\in \cs$ via
\begin{align}\label{eq:xi}
  \xi ~:=~ \pr_1(\1_{B\times B}\eta) \cup \pr_2(\1_{\complement B\times
    \complement B}\eta) \cup\pr_2(\1_{\complement B\times B}\eta)
  \cup\pr_2(\1_{B\times \complement B}\eta)
\end{align}
For a Borel set $V\subset M$ we define the restriction map
$r_V:\cs\to\cs$ by $r_V(\gamma)=\gamma|_V$. We will often use the
short hand notation $r_V(\gamma)=\gamma_V.$ By construction we have
\begin{align}\label{eq:Poisson on complement}
  \xi_{\complement B} ~&=~ \omega_{\complement B}\;,\\\label{eq:same mass in B}
  \xi(B) ~&=~\omega(B) \;.
\end{align}
Let us set $\alpha :=1/(2\sqrt{n\xi(B)})$. For $x\in \xi\cap B$ put
\begin{align*}
  \chi(x) = \begin{cases}
    x\;, & \text{ if } d(x,\complement B)>\alpha\;, \\
    x_\alpha\;, & \text{ otherwise }\;,
  \end{cases}
\end{align*}
where $x_\alpha$ is the point on the geodesic between $x$ and $x_0$
satisfying $d(x,x_\alpha)=\alpha$. Clearly $B(\chi(x),\alpha)\subset
B$. Denote the uniform distribution on $B(x,\alpha)$ by
$U_{x,\alpha}$.

Given $(\gamma,\omega)\in\supp(q)$ we define a probability measure
$\mathcal U^n_{\gamma,\omega}\in\cP(\cs)$ as follows. Let
$\xi(\gamma,\omega)$ be defined as in \eqref{eq:xi} and write
$\xi_B=\sum_{i=1}^k\delta_{x_i}$ and $\xi_{\complement
  B}=\sum_{i=k+1}^\infty\delta_{x_i}$. Given $(y_1,\dots,y_k)\in M$ we
put
\begin{align*}
  \tilde\xi(y_1,\dots,y_k) ~=~\sum_{i=1}^k\delta_{y_i}+\sum_{i=k+1}^\infty\delta_{x_i}\in\cs\;.
\end{align*}
Then we define
\begin{align*}
  \mathcal U^n_{\gamma,\omega}~:=~
  \int \Pi_{i=1}^k\delta_{\tilde\xi(y_1,\dots,y_k)}U_{\chi(x_i),\alpha}(\dd y_i)\;.
\end{align*}
Note that the map $T:(\gamma,\omega)\mapsto \mathcal
U^n_{\gamma,\omega}$ is measurable. We finally define
\begin{align*}
  \mu_n:= \int T(\gamma,\omega)\ q(d\gamma,d\omega) \ \in \cP(\cs)\;.
\end{align*}
The proof of Lemma \ref{lem:ac-approx} will be finished once we have
established the following claims.

\begin{claim}\label{cl:converge}
  $W_2(\mu,\mu_n)\to0$ as $n\to\infty$.
\end{claim}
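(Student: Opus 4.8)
The plan is to build an explicit coupling of $\mu$ and $\mu_n$ out of the data used in the construction and to estimate its cost. Concretely, I would take the optimal coupling $q\in\Opt(\mu,\nu)$ together with the measurable kernel $T:(\gamma,\omega)\mapsto\mathcal U^n_{\gamma,\omega}$ and set
\begin{align*}
  \hat q_n ~:=~ \int\big(\delta_\gamma\otimes\mathcal U^n_{\gamma,\omega}\big)\,q(\dd\gamma,\dd\omega)\;.
\end{align*}
Since the first marginal of $q$ is $\mu$ and $\mu_n=\int\mathcal U^n_{\gamma,\omega}\,q(\dd\gamma,\dd\omega)$, this $\hat q_n$ is a coupling of $\mu$ and $\mu_n$, so by definition of $W_2$ it suffices to show that
\begin{align*}
  \int\tfrac12\dcs^2(\gamma,\widetilde\xi)\,\hat q_n(\dd\gamma,\dd\widetilde\xi) ~=~ \frac12\int\!\!\int\dcs^2(\gamma,\widetilde\xi)\,\mathcal U^n_{\gamma,\omega}(\dd\widetilde\xi)\,q(\dd\gamma,\dd\omega)~\longrightarrow~0\;.
\end{align*}

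The estimate of $\dcs(\gamma,\widetilde\xi)$ for a fixed pair $(\gamma,\omega)\in\supp(q)$ (so $\dcs(\gamma,\omega)<\infty$, hence $\omega(B)<\infty$) I would split via the triangle inequality $\dcs(\gamma,\widetilde\xi)\leq\dcs(\gamma,\xi)+\dcs(\xi,\widetilde\xi)$. For the first term, the optimal matching $\eta$ of $\gamma$ and $\omega$ induces a coupling of $\gamma$ and $\xi$ in which a matched pair $(x,y)\in\eta$ contributes cost $0$ if $\xi$ retains $x$ and cost $d(x,y)$ if $\xi$ switches to $y$, the latter occurring exactly when $(x,y)\notin B\times B$; hence
\begin{align*}
  \dcs^2(\gamma,\xi) ~\leq~ \rho_n(\gamma,\omega) ~:=~ \sum_{(x,y)\in\eta,\ (x,y)\notin B\times B}d^2(x,y)\;.
\end{align*}
Since $B=B(x_0,n)\uparrow M$, for each fixed matched pair eventually $(x,y)\in B\times B$, so dominated convergence with the summable majorant $\sum_{(x,y)\in\eta}d^2(x,y)=\dcs^2(\gamma,\omega)$ gives $\rho_n(\gamma,\omega)\to0$. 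For the second term, $\widetilde\xi$ only moves each point $x_i$ of $\xi_B$ into $B(\chi(x_i),\alpha)$, and $d(x_i,\chi(x_i))\leq\alpha$ by construction, so $d(x_i,y_i)\leq2\alpha$; testing the coupling that moves $x_i\mapsto y_i$ and fixes $\xi_{\complement B}$, and using $\alpha=1/(2\sqrt{n\xi(B)})$, one gets
\begin{align*}
  \dcs^2(\xi,\widetilde\xi) ~\leq~ \sum_{i=1}^{\xi(B)}(2\alpha)^2 ~=~ 4\alpha^2\,\xi(B) ~=~ \frac1n \qquad \mathcal U^n_{\gamma,\omega}\text{-a.s.}
\end{align*}

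Combining the two bounds by the $L^2(\mathcal U^n_{\gamma,\omega})$ triangle inequality (noting $\dcs(\gamma,\xi)$ is constant in $\widetilde\xi$) yields $\int\dcs^2(\gamma,\widetilde\xi)\,\mathcal U^n_{\gamma,\omega}(\dd\widetilde\xi)\leq\big(\sqrt{\rho_n(\gamma,\omega)}+n^{-1/2}\big)^2$, which tends to $0$ pointwise in $(\gamma,\omega)$ and is bounded by $2\rho_n(\gamma,\omega)+2/n\leq2\dcs^2(\gamma,\omega)+2$, a $q$-integrable majorant since $\int\dcs^2(\gamma,\omega)\,q(\dd\gamma,\dd\omega)=2W_2^2(\mu,\nu)<\infty$. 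A final dominated convergence argument then gives $W_2^2(\mu,\mu_n)\to0$, proving the claim (and in particular $W_2(\mu,\mu_n)<\infty$ for all $n$). The only delicate point I anticipate is the bookkeeping for the induced couplings along the chain from $\gamma$ to $\xi$ to $\widetilde\xi$: one must check that each step produces an admissible transport and that $\xi$ and $\widetilde\xi$ remain locally finite, which holds because both agree with $\omega$ outside the relatively compact ball $B$ and carry only finitely many points inside $B$. Everything else is elementary; the quantitative heart of the matter is the calibration $\alpha=1/(2\sqrt{n\xi(B)})$, chosen precisely so that the spreading error $4\alpha^2\xi(B)$ equals $1/n$ uniformly in $(\gamma,\omega)$.
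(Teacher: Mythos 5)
Your proof is correct and follows the same overall strategy as the paper: couple $\mu$ to $\mu_n$ via the kernel $T$ integrated against $q$, split the error through the intermediate configuration $\xi$, and use the calibration $\alpha=1/(2\sqrt{n\xi(B)})$ to make the spreading cost exactly $1/n$. The one place where you diverge technically is in the treatment of $\dcs^2(\gamma,\xi)$. The paper introduces the quantity $c_n(\gamma,\omega)=\inf\{\int_{B_n\times B_n}d^2\,\dd\eta:\eta\in\Opt(\gamma,\omega)\}$, argues by compactness of $\Opt(\gamma,\omega)$ that $c_n\nearrow\dcs^2(\gamma,\omega)$ pointwise, and then applies monotone convergence to $\int c_n\dd q$. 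You instead bound $\dcs^2(\gamma,\xi)$ by the cost $\rho_n(\gamma,\omega)$ of the \emph{fixed} chosen matching $\eta$ outside $B_n\times B_n$, observe that this tends to $0$ by dominated convergence along the matched pairs (majorant $\dcs^2(\gamma,\omega)$), and then apply dominated convergence once more in $q$ (majorant $2\dcs^2(\gamma,\omega)+2\in L^1(q)$). This is a genuine simplification: it avoids introducing the auxiliary infimum $c_n$ and, more importantly, avoids the compactness argument needed to show $c_n\to\dcs^2$, which is the least transparent step in the paper's version. Your bookkeeping of the two error contributions and the observation that $\xi$, $\widetilde\xi$ remain locally finite because they agree with $\omega$ off the compact ball $B$ are both correct.
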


\begin{claim}\label{cl:ent}
  For all $n$ we have $\Ent(\mu_n)<\infty$.
\end{claim}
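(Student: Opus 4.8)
\emph{Claim \ref{cl:converge}.} The plan is to turn the construction itself into a coupling of $\mu$ and $\mu_n$. Given $(\gamma,\omega)\sim q$, the chosen optimal matching $\eta\in\Opt(\gamma,\omega)$, the configuration $\xi$ of \eqref{eq:xi}, and the smearing producing $\Xi\sim\mathcal U^n_{\gamma,\omega}$, the pair $(\gamma,\Xi)$ is a coupling, so $W_2^2(\mu,\mu_n)\le\tfrac12\,\E\big[\dcs^2(\gamma,\Xi)\big]$. I would then read off from \eqref{eq:xi} an explicit (non-optimal) matching between $\gamma$ and $\Xi$, built along the pairs $(x,y)$ of $\eta$: if $x,y\in B$ then $x$ is matched to its smeared image in $B(\chi(x),\alpha)$, at cost $\le(2\alpha)^2$; in every other case $x$ is matched to $y$ itself, or to its smeared image in $B(\chi(y),\alpha)$ when $y\in B$, at cost $\le 2d^2(x,y)+8\alpha^2$. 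Since $\xi(B)=\omega(B)=:k$ there are exactly $k$ smeared points and $\alpha^2=1/(4nk)$, so the smearing terms total at most $C/n$, while the rest is bounded by $3\sum\{d^2(x,y):(x,y)\in\eta,\ \{x,y\}\not\subset B\}$. Thus $\dcs^2(\gamma,\Xi)\le C/n+3\sum_{(x,y)\in\eta,\,\{x,y\}\not\subset B}d^2(x,y)$, and for fixed $(\gamma,\omega)$ the last sum tends to $0$ as $n\to\infty$ by dominated convergence over the pairs of $\eta$ (each fixed pair eventually lies inside $B=B(x_0,n)$, and $\sum_{(x,y)\in\eta}d^2(x,y)=\dcs^2(\gamma,\omega)<\infty$); a second dominated convergence in $(\gamma,\omega)$, dominated by $\dcs^2\in L^1(q)$, then gives $W_2^2(\mu,\mu_n)\to0$. (For small $n$ the event $\{\omega(B)=0\}$, which has $q$-measure tending to $0$, must be handled by a harmless convention such as $\Xi=\omega$ there.)

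\emph{Claim \ref{cl:ent}.} Here the naive bound $\Ent(\mu_n)\le\int\Ent(\mathcal U^n_{\gamma,\omega})\dd q$ coming from convexity of the entropy is useless, since $\mathcal U^n_{\gamma,\omega}$ retains the frozen deterministic part $\omega_{\complement B}$ and therefore has infinite entropy. My plan is instead to disintegrate $\mu_n$ over the restriction to $\complement B$: since $\pi=\pi_B\otimes\pi_{\complement B}$ (disjoint parts of a Poisson measure are independent) and, by \eqref{eq:Poisson on complement}, the law of $\Xi_{\complement B}$ under $\mu_n$ is $\nu_{\complement B}:=(r_{\complement B})_\#\nu$, the chain rule for relative entropy gives
\begin{align*}
  \Ent(\mu_n)~=~\Ent(\nu_{\complement B}\,|\,\pi_{\complement B})+\int\Ent\big(K(\eta_{\mathrm{out}},\cdot)\,|\,\pi_B\big)\,\nu_{\complement B}(\dd\eta_{\mathrm{out}})\;,
\end{align*}
where $K(\eta_{\mathrm{out}},\cdot)$ is the conditional law of $\Xi_B$ given $\Xi_{\complement B}=\eta_{\mathrm{out}}$. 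The first term is $\le\Ent(\nu)<\infty$ because restriction decreases relative entropy. Conditionally on $\Xi_{\complement B}=\eta_{\mathrm{out}}$ the configuration $\Xi_B$ is a mixture of products $\bigotimes_{i=1}^k U_{\chi(x_i),\alpha}$ with $k=\omega(B)$, so by convexity of $\Ent(\cdot\,|\,\pi_B)$ and a direct computation against the Poisson measure $\pi_B$ on configurations over $B$,
\begin{align*}
  \Ent\Big(\textstyle\bigotimes_{i=1}^k U_{\chi(x_i),\alpha}\,|\,\pi_B\Big)~\le~m(B)+\log(k!)-\sum_{i=1}^k\log m\big(B(\chi(x_i),\alpha)\big)\;.
\end{align*}
Using the Euclidean-type volume lower bound $m(B(z,\alpha))\ge c(n)\,\alpha^{\dim M}$ valid uniformly for $z\in\overline{B(x_0,n)}$ and $\alpha$ bounded, together with $1/\alpha=2\sqrt{nk}$ and $\log(k!)\le k\log k$, the right-hand side is at most $m(B)+(1+\tfrac12\dim M)\,k\log k+C(n)\,k$. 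Averaging over $q$ (with $k=\omega(B)$ a function of $\omega\sim\nu$) then yields
\begin{align*}
  \Ent(\mu_n)~\le~\Ent(\nu)+m(B)+\big(1+\tfrac12\dim M\big)\,\E_\nu\!\big[\omega(B)\log\omega(B)\big]+C(n)\,\E_\nu\!\big[\omega(B)\big]\;,
\end{align*}
and both moments are finite by the Gibbs variational inequality $\E_\nu[G]\le\Ent(\nu)+\log\E_\pi[e^{G}]$ applied to $G=\omega(B)$ and to $G=\eps\,\omega(B)\log\omega(B)$ with $\eps\in(0,1)$, since $\omega(B)$ is Poisson under $\pi$ and hence has the needed exponential moments.

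\emph{Main obstacle.} The substantive difficulty is Claim \ref{cl:ent}, not the Wasserstein estimate. The hard part is to pin down the exact source of non-absolute-continuity (the $\nu$-points retained outside $B$), peel it off by the chain rule so that averaging over $\nu$ restores absolute continuity, and then quantify the entropy produced by dispersing the $\omega(B)$ points inside $B$ over balls of radius $\sim(n\,\omega(B))^{-1/2}$ — which is what forces the moment $\E_\nu[\omega(B)\log\omega(B)]$ into the bound and requires knowing that this moment is finite whenever $\nu\in D(\ent)$.
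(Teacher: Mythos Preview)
Your proof of Claim \ref{cl:ent} is correct and follows the same overall strategy as the paper: disintegrate $\mu_n$ over the restriction to $\complement B$, use $\ent(\nu_{\complement B}|\pi_{\complement B})\le\ent(\nu)$, and then bound the conditional entropy inside $B$ by convexity, reducing to the explicit entropy of a product of uniforms against the Poisson measure.

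The only noteworthy difference is in the bookkeeping of the ``counting'' part. The paper splits the inside entropy according to \eqref{eq:ent over compact} into a location term $\sum_k\theta_k\,\ent(\theta_k^{-1}\theta|\pi_{B,k})$ and a counting term $\sum_k\theta_k\log(\theta_k/\pi_k)$, and then uses the structural identity $\theta_k=(\nu_{B,\omega_{\complement B}})_k$ coming from \eqref{eq:same mass in B} to control the counting term by $\ent(\nu_{B,\omega_{\complement B}}|\pi_B)\in L^1(\nu_{\complement B})$. You instead compute $\ent(\bigotimes U_i|\pi_B)$ in one shot, which produces the extra $\log(k!)$; this is harmless since it just adds to the $k\log k$ already coming from $\alpha\sim k^{-1/2}$. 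Both routes therefore need $\E_\nu[\omega(B)\log\omega(B)]<\infty$ (for the location term), and your use of the Gibbs variational inequality against the Poisson moment generating function is exactly the right way to justify this; the paper's displayed bound has $\pi_k$ where one would expect $\theta_k$, so your argument is in fact more explicit at precisely this step.
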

   
\begin{proof}[Proof of Claim \ref{cl:converge}]
  Define for $\gamma, \omega\in \cs$
  \begin{align*}
    c_n(\gamma,\omega):= \inf\left\{ \int_{B_n\times B_n} d^2(x,y)\
      \eta(dx,dy), \ \eta\in\Opt(\gamma,\omega)\right\}\;.
  \end{align*}
  By the same reasoning as for Lemma \ref{lem:existence of matching}
  there is a matching realizing the infimum. By the compactness of
  $\Opt(\gamma,\omega)$ we have the pointwise convergence
  $c_n(\gamma,\omega)\nearrow c(\gamma,\omega)=\dcs^2(\gamma,\omega).$
  For $q\in \Opt(\mu,\nu)$ we have
  \begin{align*}
    \int c_n\ dq \nearrow \int c\ dq\;.
  \end{align*}
  For $\epsilon>0$ choose $n$ large enough such that
  \begin{align*}
    \int c\ dq - \epsilon \leq \int c_n\ dq\;.
  \end{align*}
  By construction we have for any $(\gamma,\omega)\in\supp(q)$ and
  $\xi=\xi(\gamma,\omega)$ as defined in \eqref{eq:xi}
  \begin{align}\label{eq:alpha disturb}
    W_2^2(\delta_{\xi}, \mathcal U^n_{\gamma,\omega})
    ~&\leq~
    4\xi(B)\alpha^2~=~ \frac1n\;,\\
   \label{eq:gamma tilde}
    W_2^2(\delta_\gamma,\delta_{\xi})
    ~&=~
    \dcs^2(\gamma,\xi) ~\leq~
    \dcs^2(\gamma,\omega) - c_n(\gamma,\omega)\;.
  \end{align}
  Consider the coupling $Q:=(\pr_1,T)_*q$ between $\mu$ and
  $\mu_n$. Using \eqref{eq:alpha disturb} and \eqref{eq:gamma tilde}
  and the convexity of $W_2^2$ we can deduce
  \begin{align*}
    W_2^2(\mu,\mu_n)
    ~&\leq~
    \int W_2^2(\delta_\gamma,\mathcal U^n_{\gamma,\omega})\dd q(\gamma,\omega)\\
    ~&\leq~
    \frac2n + 2\int c(\gamma,\omega)-c_n(\gamma,\omega) \dd
    q(\gamma,\omega)
    ~\leq~
    \frac2n + 2\epsilon\;,
    \end{align*}
   which finishes the proof.
  \end{proof}
  
  \begin{proof}[Proof of Claim \ref{cl:ent}]
    Note that \eqref{eq:Poisson on complement} implies that
    $(r_{\complement B})_*\mu_n= (r_{\complement
      B})_*\nu=:\nu_{\complement B}$. Therefore, we can disintegrate
    $\mu_n$ with respect to $\nu_{\complement B}$ and get
    \begin{align*}
      \mu_n(\dd\omega) ~=~ (\mu_n)_{\omega_{\complement
          B}}(\dd\omega_B)\nu_{\complement B}(\dd\omega_{\complement
        B})\;.
    \end{align*}
    Denote by $(q_\omega)_\omega$ the disintegration of $q$ with
    respect to $\nu$ and by $\nu_{B,\omega_{\complement B}}$ the
    disintegration of $\nu$ with respect to $\nu_{\complement B}.$
    Then, we have
    \begin{align}\label{eq:mu prime disint}
      (\mu_n)_{\omega_{\complement B}}(\dd\omega_B)~=~ \int T\big(\gamma,(\omega_B,\omega_{\complement B})\big)\
      q_{\omega_B,\omega_{\complement B}}(\dd\gamma)\
      \nu_{B,\omega_{\complement B}}(\dd\omega_B)\;.
    \end{align}
    By disintegration we have
    \begin{align}\label{eq:ent disint}
      \ent(\mu_n|\pi)=\int \ent((\mu_n)_{\gamma_{\complement B}}|\pi_B)\
      \nu_{\complement B}(\dd\gamma_{\complement B}) +
      \ent(\nu_{\complement B}|\pi_{\complement B})\;,
    \end{align}
    where $\pi_B=(r_B)_*\pi.$ By monotonicity of the entropy under
    push forward, it holds that $\ent(\nu_{\complement B}|\pi_{\complement B})\leq \ent(\nu|\pi)<\infty$.
    Thus it remains to show that the first term is finite.  We will derive an
    estimate on $\ent((\mu_n)_{\gamma_{\complement B}}|\pi_B)$ which is
    integrable w.r.t.\ $\nu_{\complement B}$ yielding the result.

    We fix $\gamma_{\complement B}=\omega_{\complement B}$ and write
    -- for notational convenience -- $(\mu_n)_{\omega_{\complement
        B}}=\theta.$ The configuration space over the set $B$ will be
    denoted by $\cs_B$. It can be decomposed into
    $\bigcup_{k\geq0}\cs_B^{(k)},$ where
    $\cs_B^{(k)}=\{\gamma\in\cs_B:\gamma(B)=k\}.$ Note that for all
    $\rho=f\pi_B\in\cP(\cs_B)$ we have
    \begin{align}\label{eq:ent over compact}
      \ent(\rho|\pi_B)
      ~=~
      \sum_{k\geq 0}\rho_k \Big[\int_{\cs^{(k)}}f_k\log f_k\dd\pi_{B,k} + \log\frac{\rho_k}{\pi_k}\Big]\;,
    \end{align}
    where for each $k$ we have set $\pi_k=\pi_B(\cs_B^{(k)})$,
    $\pi_{B,k}=\pi_k^{-1}(\pi_B)_{\llcorner \cs_B^{(k)}}$, as well as
    $\rho_k=\rho(\cs_B^{(k)})$ and $\rho_k^{-1}\rho=\pi_k^{-1}f_k\
    \pi_B$ on $\cs_B^{(k)}$. By \eqref{eq:same mass in B}, we have that
    $\theta(\gamma:\gamma(B)=k)=\nu_{B,\omega_{\complement
        B}}(\gamma:\gamma(B)=k)$ for all $k$, i.e.\
    $\theta_k=(\nu_{B,\omega_{\complement B}})_k.$ Since $\nu\in
    D(\ent)$, the formulas \eqref{eq:ent over compact} and
    \eqref{eq:ent disint} imply that 
    \begin{align*}
      \sum_k \theta_k \log \frac{\theta_k}{\pi_k}
     ~\leq~
       \ent(\nu_{B,\omega_{\complement B}}|\pi_B)~\in~
      L^1(\nu_{\complement B})\;.
    \end{align*}
    By \eqref{eq:ent over compact}, we therefore need to find a good
    estimate on $\ent(\theta_k^{-1}\theta|\pi_{B,k})$ for all $k$. Put
    $A_k:= T^{-1}(\cs^{(k)}\cup \omega_{\complement B})$. By Jensen's
    inequality and \eqref{eq:mu prime disint} we have
    \begin{align*}
      \ent&(\theta_k^{-1}\theta|\pi_{B,k})\\ &\leq \int_{A_k}
      1/\theta_k\ \ent((r_B)_*T(\gamma,(\omega_B,\omega_{\complement B}))|\pi_{B,k}) \ q_{\omega_B,\omega_{\complement B}}(d\gamma)
      \nu_{B,\omega_{\complement B}}(d\omega_B)\;.
    \end{align*}
    Hence, we need to estimate the entropy of
    $(r_B)_*T(\gamma,(\omega_B,\omega_{\complement B}))$ which is a
    random $k$-point configuration, where each point of the
    configuration is uniformly distributed on a ball of radius
    $\alpha=1/(2\sqrt{n\xi(B)})$ independently of the others. Putting $\tilde m=
    m_{\llcorner B}/m(B)$ and $U_i=U_{\chi(x_i),\alpha}$ for
    $\xi(\gamma,\omega)\cap B =\sum_{i=1}^k \d_{x_i}$ we get using
    $m(B(x,r))\geq \kappa r^N$ uniformly in $x\in B$ and $ r\in
    [0,1/2]$ for some constants $\kappa$ and $N$
    \begin{align*}
      \ent((r_B)_*T(\gamma,(\omega_B,\omega_{\complement B}))|\pi_{B,k})~&=~\ent(\Pi_{i=1}^k U_i|\tilde m^{\otimes k})\\
      ~&=~
      \sum \ent(U_i|\tilde m) \leq C k (\log k + \log n ),
    \end{align*}
    for some constant $C$ depending only on $B$. Putting everything
    together we get
    \begin{align*}
      \ent(\theta|\pi_B)~&\leq~ C \sum_{k\geq 0} \pi_k k (\log k + \log
      n) + \sum_{k\geq 0} \theta_k \log \frac{\theta_k}{\pi_k}\\
      &\leq~ C' + \ent(\nu_{B,\omega_{\complement B}}|\pi_B)\;,
    \end{align*}
    which is in $ L^1(\nu_{\complement B})$ by \eqref{eq:ent disint}
    and the assumption that $\nu\in D(\ent)$. This finishes the proof.
\end{proof}

\begin{lemma}\label{lem:meas selec}
  Let $\mu,\nu\in \cP(\cs)$ with $W_2(\mu,\nu)<\infty$ and
  $q\in\Opt(\mu,\nu).$ Then there is a measurable selection
  $S:\supp(q)\to \cs_{M^2}$ of optimal matchings.
\end{lemma}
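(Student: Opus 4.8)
The plan is to realize $S$ as a measurable selection of the multifunction sending $(\gamma,\omega)$ to its set of optimal matchings, regarded as a subset of the Polish space $\cs_{M^2}$ (the configuration space over $M^2$ is Polish with the vague topology, for the same reason $\cs$ is, cf.\ Section~\ref{sec:prelim}). Since $W_2(\mu,\nu)<\infty$ forces $\dcs(\gamma,\omega)<\infty$ for $q$‑a.e.\ pair, it is enough to construct such a selection on the Borel, $q$‑conull set $D:=\{(\gamma,\omega):\dcs(\gamma,\omega)<\infty\}$.

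First I would record the structural properties of the fibres. For $(\gamma,\omega)\in D$ put
$$F(\gamma,\omega):=\Big\{\eta\in\cs_{M^2}\ :\ (\pr_1)_*\eta=\gamma,\ (\pr_2)_*\eta=\omega,\ \eta(\{z\})\le1\ \forall z,\ \textstyle\int d^2\dd\eta=\dcs^2(\gamma,\omega)\Big\}.$$
By Lemma~\ref{lem:existence of matching} this is nonempty, and I claim it is moreover compact in $\cs_{M^2}$. Every $\eta\in F(\gamma,\omega)$ satisfies $\eta(B\times B)\le\gamma(B)<\infty$ for each ball $B$ centred at $x_0$, so $F(\gamma,\omega)$ is vaguely relatively compact; and a vague limit of elements of $F(\gamma,\omega)$ still has marginals $\gamma$ and $\omega$ — mass cannot escape to infinity in one coordinate without forcing $\int d^2\dd\eta$ to blow up, which is exactly the a priori cost bound already exploited in the proof of Lemma~\ref{lem:existence of matching} — while the $0$–$1$ constraint (tested against $C_c(M^2)$ functions) and optimality (by lower semicontinuity of $\eta\mapsto\int d^2\dd\eta$) survive passing to the limit, just as in that proof.

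Second, I would verify that the graph $\mathrm{Gr}(F)=\{(\gamma,\omega,\eta):\eta\in F(\gamma,\omega)\}\subseteq\cs\times\cs\times\cs_{M^2}$ is Borel. The marginal constraints reduce to $\eta(A\times M)=\gamma(A)$ and $\eta(M\times A)=\omega(A)$ for $A$ ranging over a countable generating algebra of $\cB(M)$, hence define a Borel set; the simplicity condition $\eta(\{z\})\le1$ for all $z$ cuts out a Borel subset of $\cs_{M^2}$; the functional $\eta\mapsto\int d^2\dd\eta$ is vaguely lower semicontinuous, being the supremum of the increasing sequence of vaguely continuous functionals $\eta\mapsto\int g_k\dd\eta$ for suitable $g_k\in C_c(M^2)$ with $g_k\uparrow d^2$, hence Borel; and $(\gamma,\omega)\mapsto\dcs^2(\gamma,\omega)$ is Borel since $\dcs$ is vaguely lower semicontinuous (Section~\ref{sec:prelim}). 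Thus $\mathrm{Gr}(F)$ is Borel with all vertical sections compact. The Arsenin--Kunugui uniformisation theorem then provides a Borel map $S$ on the Borel projection $\{(\gamma,\omega):F(\gamma,\omega)\neq\emptyset\}\supseteq D$ with $S(\gamma,\omega)\in F(\gamma,\omega)$; its restriction to $\supp(q)$ is the desired measurable selection of optimal matchings. (A softer alternative, amply sufficient for the use made of this lemma, is to observe that $\{(\gamma,\omega):F(\gamma,\omega)\cap U\neq\emptyset\}$ is analytic for every open $U\subseteq\cs_{M^2}$, so $F$ is measurable with respect to the universal $\sigma$-algebra, and then invoke the Kuratowski--Ryll-Nardzewski selection theorem to get a universally measurable selection.)

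I expect the only genuinely delicate point to be the second step: the bare coupling constraint is \emph{not} closed under vague convergence because mass can escape to infinity in one coordinate, so one must intersect with the finite-cost level set to control this and thereby establish that $\mathrm{Gr}(F)$ is Borel (in fact closed-sectioned), along with checking the Borel measurability of the optimal-value function $(\gamma,\omega)\mapsto\dcs^2(\gamma,\omega)$. Everything else is a routine application of classical descriptive-set-theoretic selection machinery.
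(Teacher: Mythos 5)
Your argument matches the paper's proof in all essentials: both verify that the graph of the multifunction sending $(\gamma,\omega)$ to its (nonempty, compact) set of optimal matchings in $\cs_{M^2}$ is Borel, via lower semicontinuity of $\eta\mapsto\int d^2\dd\eta$ and of $\dcs$ together with measurability of the marginal maps, and both then invoke a classical measurable-selection theorem — the paper cites Kuratowski--Ryll-Nardzewski (your ``softer alternative''), while you primarily cite Arsenin--Kunugui, but either applies here because the fibres are compact. Your verification of Borel measurability of the graph and of compactness of the fibres is somewhat more explicit than the paper's terse version, but the route is the same.
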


\begin{proof}
  Take $(\gamma,\omega)\in\supp(q).$ Any matching of $\gamma$ and
  $\omega$ can be identified with an element of the configuration
  space over $M^2$, denoted by $\cs_{M^2}.$ Note that the map
  assigning to $\eta$ its marginals $p_1(\eta)$ and $p_2(\eta)$ is
  measurable w.r.t.\ the vague topologies on $\cs_{M^2}$ and $\cs_M.$
  Moreover, by Lemma 4.1 (i) and (vi) of \cite{RS99} the mappings
$$G: \cs_{M^2}\to [0,\infty] \quad \eta\mapsto \int d^2(x,y) \dd \eta(x,y)$$
and
$$ \tilde F: \cs_M\times \cs_M\to [0,\infty] \quad (\gamma,\omega)\mapsto \dcs(\gamma,\omega)$$
are lower semicontinuous. Hence, the function $ F = \tilde F \circ
(p_1(\cdot),p_2(\cdot))$ is measurable w.r.t.\ the vague topology on
$\cs_{M^2}.$ (Note that we always have $F(\eta)\leq G(\eta).$) Then
the set
$$ L=\{(\gamma,\omega,\eta) : (p_1(\eta),p_2(\eta))=\eta, F(\eta)=G(\eta)\}$$
is Borel measurable. Moreover, as the set of optimal matchings of
$(\gamma,\omega)$ is closed (even compact) we can use the selection
Theorem by Kuratowski and Ryll-Nardzewski (e.g.\ \cite[Thm.\
5.2.1]{Sri98}) to get the desired map.
\end{proof}

\bibliographystyle{plain}
\bibliography{config}

\end{document}